\newcommand\reallywidehat[1]{%
\savestack{\tmpbox}{\stretchto{%
\scaleto{%
\scalerel*[\widthof{\ensuremath{#1}}]{\kern-.6pt\bigwedge\kern-.6pt}%
{\rule[-\textheight/2]{1ex}{\textheight}}
}{\textheight}%
}{0.5ex}}%
\stackon[1pt]{#1}{\tmpbox}%
}
\definecolor{ao(english)}{rgb}{0.0, 0.5, 0.0}
\definecolor{applegreen}{rgb}{0.55, 0.71, 0.0}
\title{{\red Parallel} Sequential Quadratic Programming with Overlapping Graph Decomposition and Exact Augmented Lagrangian 
}
\author{
Runxin Ni$^{\P}$\thanks{Department of Statistics, University of Chicago, Chicago, IL (\email{runxin.ni@uchicago.edu}).}
\and
Haoxuan Wang$^{\P}$\thanks{School of Industrial and Systems Engineering, Georgia Tech, Atlanta, GA (\email{hwang3111@gatech.edu}, \email{senna@gatech.edu}).}
\and
Sen Na\footnotemark[2]
\and
Sungho Shin\thanks{Department of Chemical Engineering, Massachusetts Institute of Technology, Cambridge, MA (\email{sushin@mit.edu})}
\and
Mihai Anitescu\thanks{Mathematics and Computer Science Division, Argonne National Laboratory, Lemont, IL (\email{anitescu@mcs.anl.gov}).}
}
\crefname{hypothesis}{Hypothesis}{Hypotheses}
  \def\refstepcounter@optarg[#1]#2{%
    \cref@old@refstepcounter{#2}%
    \cref@constructprefix{#2}{\cref@result}%
    \@ifundefined{cref@#1@alias}%
      {\def\@tempa{#1}}%
      {\def\@tempa{\csname cref@#1@alias\endcsname}}%
    \protected@edef\cref@currentlabel{%
      [\@tempa][\arabic{#2}][\cref@result]%
      \csname p@#2\endcsname\csname the#2\endcsname}%
  }%
\begin{document}

\makeatletter
\def\EqualContributionNote{%
  \begingroup
  \endgroup
}
\makeatother

\renewcommand{\thefootnote}{}
\footnotetext{\hspace*{-2.4em}
\hbox to1.8em{\hss$^{\P}$}These authors contributed equally to this work.}%
\maketitle
\footnotetext{\hspace*{-2.1em}\hbox to1.8em{\hss$^1$}\relax
This material is based upon work supported by the U.S. Department of Energy, Office of Science, Office of Advanced Scientific Computing Research (ASCR) under Contract DE-AC02-06CH11347.}
\def\thefootnote{\arabic{footnote}}

\begin{abstract}
In this paper, we address the challenge of solving large-scale graph-structured nonlinear programs (gsNLPs) in a scalable manner. GsNLPs are problems in which the objective and constraint functions are associated with nodes on a graph and depend on the variables of adjacent nodes. This graph-structured formulation encompasses various specific instances, such as dynamic optimization, PDE-constrained optimization, multi-stage stochastic optimization, and general network optimization. By leveraging the sequential quadratic programming (SQP) framework, we propose a globally convergent overlapping graph decomposition method to solve large-scale gsNLPs under standard mild regularity conditions on the graph topology. In each iteration, we perform an overlapping graph decomposition to compute an approximate Newton direction in a parallel environment. Then, we select a suitable stepsize and update the primal-dual iterate by performing a backtracking line search on an exact augmented Lagrangian merit function. Built on the exponential decay of sensitivity of gsNLPs, we show that the approximate Newton direction is a descent direction of the augmented Lagrangian, which leads to global convergence with local linear convergence rate. In particular, global convergence is achieved for sufficiently large overlaps, and the local linear convergence rate improves exponentially in terms of the overlap size. Our results match existing state-of-the-art guarantees established for dynamic programs (which simply correspond to linear graphs). We validate the theory on~{\red two} PDE-constrained problems: a semilinear elliptic problem and {\red a boundary heating problem}.

\end{abstract}

\begin{keywords}
graph-structured nonlinear programs; overlapping decomposition; sequential quadratic programming; augmented Lagrangian; parallel computing
\end{keywords}

\section{Introduction}\label{sec1}

We consider {\it graph-structured nonlinear programs} (gsNLPs) \cite{Shin2022Exponential}:
\begin{subequations}\label{eq21}
\begin{align}
\min_{\{\bx_i\in \mR^{r_i}\}_{i\in\mV}} \quad & \sum_{i\in\mV} f_i(\{\bx_j\}_{j\in N_{\mG}[i]}), \label{eq21a}\\
\text{s.t. }\quad\;\; &  \bc_i(\{\bx_{j}\}_{j\in N_{\mG}[i]})=\textbf{0},\; \; i\in\mathcal{\mV}, \;\; (\bl_i). \label{eq21b}
\end{align}
\end{subequations}
Here, $\mG = (\mV, \mE)$ is an undirected graph, $\mV$ is the strictly ordered node set, $\mE\subseteq\{\{i,j\}:i,j\in\mV, i\neq j\}$ is the edge set, and $N_{\mG}[i] \coloneqq\{j\in\mV:\{i,j\}\in\mE\}\cup\{i\}$ is the closed neighborhood of $i\in\mV$ on $\mG$. For each node $i$, we have the following associated quantities: $\bx_i\in \mR^{r_i}$ is the primal variable, $f_i:\Pi_{j\in N_{\mG}[i]} \mR^{r_j}\rightarrow \mR$ is the objective function, $\bc_i:\Pi_{j\in N_{\mG}[i]} \mR^{r_j}\rightarrow \mR^{m_i}$ is the equality constraint function, and $\bl_i\in \mR^{m_i}$ is the dual variable associated with the constraint $\bc_i$.
Problem \eqref{eq21} arises in various applications, including optimal control \cite{Biegler2010Nonlinear}, partial differential equation (PDE)-constrained problems \cite{Biegler2003Large}, multi-stage stochastic optimization \cite{Pereira1991Multi}, and network optimization \cite{Zlotnik2015Optimal}.

In this paper, we develop a scalable method for solving large-scale problems of form \cref{eq21}. The complexity of these problems stems from the large size of the underlying graph $\mG$, which can make many centralized methods intractable. To address this issue, decomposition-based methods are of particular interest. In these methods, the intractable full problem is decomposed into a set of smaller subproblems, each designed to be individually tractable on the local machine. Decomposition methods apply an iterative manner to ensure the convergence of the iterates toward the full solution; the convergence is achieved by exchanging solution information among (adjacent) subproblems in each iteration step. Examples of decomposition methods include Lagrangian decomposition \cite{Jackson2003Temporal, Beccuti2004Temporal}, temporal decomposition \cite{Jackson2003Temporal, Xu2018Exponentially}, Jacobi or Gauss--Seidel methods \cite{Zavala2016New}, and alternating direction method of multipliers (ADMM) \cite{Ghadimi2015Optimal}.$\hskip0.2cm$

Although decomposition methods offer several advantages over centralized methods, such as computational scalability and flexibility in implementation, they may exhibit slower convergence rates compared to centralized methods (e.g., Ipopt) \cite{Kozma2014Benchmarking}. The slow convergence can be attributed to their iterative manner and limited amount of information shared across the subproblems. As investigated in the benchmark study \cite{Kozma2014Benchmarking}, the practically observed convergence rates of existing decomposition schemes are often too slow to make those schemes practically useful. Such limitations motivate the design of a new decomposition paradigm, which aims to resolve the issue of slow convergence while preserving the desired scalability.

Unlike the majority of decomposition methods that exclusively decompose the full problem, there is a fast growing literature studying the {\it overlapping decomposition} \cite{Shin2020Decentralized, Shin2020Overlapping, Na2023Fast, Na2022Convergence}. This type of methods decomposes the full problem into a set of \textit{overlapping} subproblems. The overlap can facilitate exchanging more information between (adjacent) subproblems in each iteration, hereby significantly improving the convergence behavior compared to the methods that perform exclusive decomposition. One particular instance of overlapping decomposition is the {\it overlapping Schwarz method}, whose origin can be traced back to classical domain decomposition methods proposed for solving second-order elliptic PDEs \cite{Chan1994Additive, chan1994domain}. The overlapping Schwarz method has been applied to various structured problems, and its convergence properties have been investigated under different settings, including unconstrained quadratic programs \cite{Shin2020Decentralized}, constrained quadratic programs \cite{Shin2020Overlapping}, and nonlinear optimal control \cite{Na2022Convergence}. This series of literature has established not only the \textit{local convergence} of the Schwarz method, but also the relationship between its local convergence rate and the overlap size. Specifically, the authors have shown that \textit{the local linear convergence rate of the Schwarz method improves exponentially with respect to the overlap size}, meaning that one can address the issue of slow convergence in classical (non-overlapping) decomposition methods by slightly increasing the subproblem complexity.

While the overlapping Schwarz method may enjoy some benefits over classical methods, it faces two major concerns. First, the subproblems in each iteration of the Schwarz method are nonlinear and nonconvex, and the method solves \textit{all} subproblems to \textit{optimality}, albeit in parallel, leading to significant per-iteration computational costs on each local machine. Second, the Schwarz method lacks the utilization of sophisticated globalization strategies, such as backtracking line search and trust region, to enforce global convergence. In particular, the aforementioned works on the Schwarz method only provided a local convergence guarantee, meaning that the convergence is guaranteed to occur only when the initial guess is sufficiently close to the local solution. The above concerns have motivated a recent enhanced algorithm design to reduce per-iteration computational costs and incorporate a globalization strategy into the Schwarz method, while still preserving its desired fast local linear convergence rate \cite{Na2023Fast}. 
In that work, the authors designed a {\it Fast Overlapping Temporal Decomposition} (FOTD) procedure for solving nonlinear dynamic programs. The method embeds overlapping temporal decomposition into the Sequential Quadratic Programming (SQP) framework, where the \mbox{decomposition} is applied to the (large-scale) quadratic program in each SQP step, making all subproblems easily-solvable quadratic programs. Additionally, FOTD performs backtracking line search on a differentiable exact augmented Lagrangian merit function. Although the exact augmented Lagrangian serves as an alternative to non-differentiable $\ell_1/\ell_2$ merit functions in SQP methods (see \cite{Na2022adaptive, Hong2023Constrained} and references therein), recent research has demonstrated that it may offer particular benefits when applied to dynamic programs under different settings \cite{Na2021Global, Na2023Fast}. Specifically, with such a globalization strategy, \cite{Na2023Fast} showed that FOTD enjoys global convergence while preserving the local linear convergence rate of the Schwarz method.

Inspired by the promising results of FOTD in dynamic programs, this paper generalizes that method to a much broader class of nonlinear programs. In particular, we design the \textit{Fast Overlapping Graph Decomposition} (FOGD) method to handle gsNLPs of the form \cref{eq21}, so that dynamic programs simply correspond to the special case of linear graphs. 
We embed overlapping graph decomposition within an SQP framework (in the spirit of \cite{Na2023Fast}). At each SQP step, we compute approximate Newton directions \textit{in parallel} by solving small overlapping subproblems, while carefully specifying their boundary losses and constraints. The stepsize is then chosen via line search on an augmented Lagrangian merit function.

{\red Our main technical contribution lies in demonstrating the global and local convergence of the FOGD method. Under a mild graph growth condition on $\mG$, we establish error bounds between the approximate Newton direction (computed in parallel) and the exact Newton direction. We show that the approximate Newton direction is a descent direction of the augmented Lagrangian, provided that we specify the boundary loss and overlap size properly. This ultimately leads to global convergence with an explicit local \textit{linear, overlap-dependent} rate. We mention that the growth condition excludes tree graphs, which grow exponentially as seen in multi-stage stochastic optimization. This is simply because, even if the errors introduced by graph decomposition decay exponentially with the distance to the graph boundary, accumulating such errors over an exponential number of boundary nodes may still be uncontrollable.
That said, to the best of our knowledge, FOGD is the first overlapping decomposition scheme that solves gsNLPs with a global convergence guarantee while matching the state-of-the-art local linear convergence rate of the Schwarz method. Below, we highlight some key differences relative to two closely related lines of work: FOTD for dynamic programs and distributed optimization over graphs.
}

{\red
On the one hand, compared with FOTD \cite{Na2023Fast}, our setting and analysis extend much beyond line graphs and the modifications of the quadratic subproblem objective at the endpoints (cf. their (3.9)). Our subproblem formulation \eqref{eq37_sec3} applies to general graphs and treats boundary terms and coupling constraints consistently across different node sets. Moreover, Theorem 4.1 in \cite{Na2023Fast} strictly leverages the line-graph decay of sensitivity property, while our result in \Cref{thm42} extends the decomposition error analysis to broader graph families under the growth condition. Our local convergence analysis in \Cref{lm62} similarly goes beyond the boundary-node analysis in Theorem 6.2 of \cite{Na2023Fast}. On the other hand, large-scale optimization over graphs has been studied extensively, including distributed methods \cite{Notarstefano2019Distributed, Notarnicola2020Distributed, Scutari2019Distributed, Sun2022Distributed} and asynchronous variants \cite{Cannelli2020Asynchronousa}, most of which are first-order or based on successive convex approximation (SCA). 
We compare these methods with ours from three aspects:
\begin{enumerate}[wide, labelindent=0pt, label=\textbf{(\alph*)},topsep=0pt]
\setlength\itemsep{0.0em}
\item \textbf{Problem class.}
Many distributed methods focus on unconstrained or convexly constrained formulations and often allow composite objectives consisting of a smooth (possibly nonconvex) term plus a convex (possibly nonsmooth) regularizer \cite{Notarstefano2019Distributed, Scutari2019Distributed, Sun2022Distributed}. In parallel computing settings, asynchronous parallel frameworks extend this composite model to broad classes of nonconvex constrained problems \cite{Cannelli2020Asynchronous}. Over graphs, \cite{Cannelli2020Asynchronousa} studies a partially separable sum-cost model with block-separable closed convex constraints and convex (possibly nonsmooth) local terms: each agent controls a block of variables and each smooth cost term depends only on a small neighborhood of that block. 
By comparison, we study gsNLPs with smooth nonlinear objectives and nonlinear equality constraints that couple neighboring variables \eqref{eq21}, motivated by PDE-constrained problems and large-scale optimal control. Such nonlinear constraint coupling is typically not the focus in unconstrained or separably constrained formulations (e.g., aggregative feedback optimization) \cite{Carnevale2024Nonconvex}. Moreover, methods focusing on affine coupling constraints over networks \cite{Yarmoshik2024Decentralized} or on linear-and-conic constrained nonconvex programs \cite{Dvurechensky2025Hessian} do not directly cover our nonlinear equality constraints that couple neighboring variables; generalized Nash-equilibrium models with coupling constraints \cite{Scarabaggio2024Local}~also~address a different multi-agent problem class.

\vskip2pt
\item \textbf{Assumptions and convergence guarantees.}
For distributed first-order and SCA methods, convergence to stationary points is typically proved under standard smoothness assumptions with mild regularity and boundedness conditions \cite{Notarstefano2019Distributed, Scutari2019Distributed, Sun2022Distributed, Notarnicola2020Distributed}. 
However, the resulting convergence rates are often sublinear, while linear rates usually require stronger regularity such as strong convexity or error-bound-type conditions.
Asynchronous frameworks further rely on explicit asynchrony models and assumptions (e.g. probabilistic models or bounded delays ensuring that information does not become infinitely old) \cite{Cannelli2020Asynchronous, Cannelli2020Asynchronousa}; in particular, \cite{Cannelli2020Asynchronousa} establishes sublinear convergence to stationarity for nonconvex objectives and an $R$-linear rate under the Luo--Tseng error bound condition. Related distributed optimal-control works prove convergence to trajectories satisfying first-order necessary conditions \cite{Sforni2024Distributed}; stronger rate-type guarantees, when available, are obtained under additional regularity such as strong convexity or PL conditions.
All above results differ significantly from our analysis for nonlinear nonconvex problems. As discussed above, our SQP-type approach provides global convergence and a local linear rate, which improves exponentially with respect to~the~overlap~size.~In~smooth unconstrained case, our method reduces to a damped inexact~Newton~scheme.

\vskip2pt

\item \textbf{Decomposition structure, implementation, and graph classes.}
Many distributed algorithms are designed for directed and possibly time-varying communication graphs \cite{Scutari2019Distributed, Sun2022Distributed}. Asynchrony has been studied both in parallel multi-worker architectures, where block updates may use delayed information under a probabilistic delay model \cite{Cannelli2020Asynchronous}, and in networked settings, where agents update without global coordination and may rely on delayed, out-of-sync information from their neighbors \cite{Cannelli2020Asynchronousa}. A recent closely related work is the sensitivity-based distributed programming framework (SBDP+) \cite{Esch2025Enforcing}, which also targets graph-structured nonlinear programs and solves local nonlinear subproblems with neighbor-to-neighbor communication. 
However, in addition to imposing stronger assumptions on the Lagrangian Hessian matrices (e.g., their Assumptions 4.2 and 5.3) and lacking global convergence analysis, its decomposition strategy is fundamentally different from ours. SBDP+ employs first-order sensitivities and a \textit{non-overlapping} primal decomposition, in which each node $i$ optimizes only its own local block while neighboring variables are fixed at the current iterate; their influence is incorporated through sensitivity terms and a transformed primal-dual update. By comparison, our method FOGD targets static undirected graphs and adopts a synchronous but highly parallel SQP-based overlapping decomposition. Unlike SBDP+, FOGD enlarges each local subproblem with overlap and incorporates neighboring effects through carefully designed boundary losses and constraints. This construction is motivated by the exponential decay of sensitivity in gsNLPs: even when boundary data are only approximate, their influence on the interior decays rapidly with the distance to the subdomain boundary. At each SQP iteration, we solve small overlapping subproblems locally and exchange only boundary information without solving a global KKT system.

\end{enumerate}
}

\vskip5pt
\paragraph{Structure of the paper}
The remainder of this paper is organized as follows. In \Cref{sec2}, we introduce SQP and OGD and propose the FOGD procedure. In \Cref{sec3}, we establish the error bound of the approximate Newton direction. Then, we prove the global convergence of FOGD in \Cref{sec4}, and prove the local linear convergence and show the relationship between the linear rate and the overlap size in \Cref{sec5}. We present numerical experiments to demonstrate~our theoretical findings in \Cref{sec6}, and conclude with future directions in \Cref{sec7}. Some of~the proofs are deferred to the appendix to make the main paper compact.

\vskip5pt
\paragraph{Notation} 
Throughout the paper, all vectors $\bx_i$ are assumed to be column vectors. For any integer $n$, we let $[n] \coloneqq \{1, \cdots, n\}$. For any set $\mI$, we denote $|\mI|$ as the number of elements in $\mI$. For any two sets $\mI$ and $\mJ$, we let $\mI \backslash \mJ \coloneqq \{i:i\in \mI,\;i\notin \mJ\}$. For a strictly ordered node set $\mI = \{i_1, i_2, \cdots, i_{|\mI|}\}$ with $i_1 < i_2 < \cdots < i_{|\mI|}$, we define $\{\bx_i\}_{i\in \mI} \coloneqq (\bx_{i_1}; \cdots; \bx_{i_{|\mI|}})$ as the vector that stacks all variables of $\mI$. For a (one- or) two-way  indexed array $M$, $M[\mI][\mJ]$ represents the subarray where the first index lies in the (strictly ordered node) set $\mI$, and the second index lies in the set $\mJ$. For the undirected graph $\mG=(\mV,\mE)$ and a node set $\mW\subseteq \mV$, we let $N^b_{\mG}[\mW] \coloneqq \{i\in \mV:d_{\mG}(i,\mW)\leq b\}$ be the $b$-hop neighborhood of $\mW$, where $d_{\mG}(i,\mW)$ is the number of edges in the shortest path connecting the node $i$ and the set $\mW$ on graph $\mG$. We omit the superscript $b$ when $b=1$, i.e., $N_{\mG}[\mW] = N^1_{\mG}[\mW]$. We also let $N_{\mG}(\mW) \coloneqq N_{\mG}[\mW]\backslash \mW$ be the neighborhood of $\mW$ excluding itself (i.e., the open neighborhood).

\section{Fast Overlapping Graph Decomposition}\label{sec2}

We present FOGD in three steps: (1) introduce the Newton system in each SQP iteration, (2) apply OGD to approximately solve the Newton system, and (3) select a proper stepsize by performing line search on the exact~augmented Lagrangian merit function. We summarize the full method in \cref{algo2}.

\vskip4pt
\paragraph{Step 1: the Newton system in each SQP iteration}

We define the Lagrangian function of \cref{eq21} as $\mL(\bx, \bl) = f(\bx) + \bl^T\bc(\bx)$, where $f(\bx)=\sum_{i\in \mV} f_i(\{\bx_j\}_{j\in N_{\mG}[i]})$ is the objective function, $\bc(\bx)=\{\bc_i(\{\bx_{j}\}_{j\in N_\mG[i]})\}_{i\in \mV}$ are the equality constraints, and $\bl = \{\bl_i\}_{i\in\mV}$ are the dual variables associated with the constraints. We apply Newton's method to the Karush-Kuhn-Tucker (KKT) condition $\nabla\mL(\bxs,\bls) = \bnz$. In particular, given the $\tau$-th iterate $(\bxt, \blt)$, the search direction $(\Delta \bxt, \Delta \blt)$ is computed by solving the following Newton system:
\begin{equation}\label{eq31}
\begin{bmatrix}
\hHt & (\Gt)^T\\
\Gt & \bnz 
\end{bmatrix}\begin{bmatrix}
\Delta \bxt\\
\Delta \blt
\end{bmatrix} = - \begin{bmatrix}
\nbx \mLt\\
\nbl \mLt
\end{bmatrix},
\end{equation}
where $\nabla \mLt = \nabla\mL(\bxt, \blt)$ is the Lagrangian gradient, $\Gt = \nabla\bc(\bxt)$ is the constraints Jacobian, and $\hHt$ is a modification of the Lagrangian Hessian $H^\tau = \nabla^2_{\bx}\mL^\tau$. The modification is to ensure that $\hHt$ is positive definite in the null space $\{\bo: G^\tau\bo = \bnz\}$, which, together~with the~full row-rankness of $G^\tau$, ensures the well-definedness of the system \eqref{eq31} \cite[Lemma 16.1]{Nocedal2006Numerical}.$\hskip1cm$

Solving the Newton system \cref{eq31} is equivalent to solving the quadratic program:
\begin{equation}\label{eq36}
\min_{\bo}\;\; \frac{1}{2}\bo^T \hHt \bo + \bo^T \nbx \mLt, \quad\;\text{s.t. }\;\;  \bct + \Gt \bo = \bnz, \;\; (\bze),
\end{equation}
where $\bct = \bc(\bx^\tau)$ and $\bze$ is the dual vector. Since the solution of \cref{eq31} is the unique global solution of Problem \cref{eq36}, we let $((\bo^\tau)^{\boldsymbol{*}}, (\bze^\tau)^{\boldsymbol{*}})$ be the solution of \cref{eq36} and have $(\Delta \bx^\tau, \Delta \bl^\tau) = ((\bo^\tau)^{\boldsymbol{*}}, (\bze^\tau)^{\boldsymbol{*}})$. To ease the notation, we may occasionally suppress the iteration index $\tau$ when referring to the variables in a generic iteration.

\vskip4pt
\paragraph{Step 2: OGD on the Newton system} 

For large-scale gsNLPs, solving \cref{eq36} exactly can be computationally intractable. To address this issue, we apply OGD to \cref{eq36} to compute an approximate solution in a parallel environment. The OGD procedure is summarized as follows. First, we decompose the entire problem domain $\mV$ into a set of overlapping subdomains. Next, we formulate a subproblem for each subdomain by considering only the associated objectives and constraints within the subdomain. Then, we solve the subproblems in parallel; the solutions from the exclusive parts of each subdomain are composed to form the final approximate Newton direction.
{\red
Under our computation scheme, we do not need to store the full matrices $\hHt$ and $\Gt$ (or the full vectors $\nabla \mLt$ and $\bct$), which would be prohibitive for large-scale problems. Instead, each subproblem requires only the corresponding local blocks in \eqref{eq37}. As a result,~the per-iteration memory usage and computational cost scale with the subdomain~size.
}

In particular, let us consider a disjoint partition $\{\mV_{\ell}\}_{\ell\in[M]}$ of the full node set $\mV$, and extend each set $\mV_{\ell}$ to its $b$-hop neighborhood, where $b\geq 1$ is a user-specified parameter (called the overlap size):
\begin{equation*}
\mV = \bigcup_{\ell=1}^M\mV_{\ell},\quad\quad \quad \mV_\ell \;\text{ disjoint with each other},\quad\quad \quad N^b_{\mG}[\mV_{\ell}] \subseteq \mW_{\ell},\;\; \forall \ell\in[M].
\end{equation*}
We may just let $\mW_{\ell}=N^b_{\mG}[\mV_{\ell}]$ in the simple case. With the above overlapping decomposition $\{\mW_\ell\}_{\ell\in[M]}$, we define the subproblem $\mS\mP_{\ell}^{\tau, \mu}$ for the subdomain $\mW_{\ell}$ in the $\tau$-th iteration as:
\begin{subequations}\label{eq37}
\begin{align}
\min_{\bto_{\ell}}\;\; & \frac{1}{2}\bto_{\ell}^T\hHt[\mW_{\ell}][\mW_{\ell}] \bto_{\ell} + \bto_{\ell}^T \nabla_{\bx} \mLt[\mW_{\ell}] + \frac{\mu}{2}\nbr{\bct[\hat{\mW}_{\ell}] + G^\tau[\hat{\mW}_{\ell}][\mW_{\ell}] \bto_{\ell} }^2, \label{eq37a}\\
\text{s.t.}\;\; &\bct[\mW_{\ell}\backslash \tmW_{\ell}]  + \Gt[\mW_{\ell}\backslash \tmW_{\ell}][\mW_{\ell}] \bto_{\ell} = \bnz,\quad (\btze_{\ell}), \label{eq37b}
\end{align}
\end{subequations}
where $\tmW_{\ell} \coloneqq {N_{\mG}\left(N_{\mG}(\mW_{\ell})\right)}\cap \mW_{\ell}$ is the set of internal boundary nodes of $\mW_{\ell}$, $\hat{\mW}_{\ell} \coloneqq N_{\mG}(\mW_{\ell})\cup \tmW_{\ell}$ is the set of internal and external boundary nodes of $\mW_{\ell}$, and $\bto_{\ell} = \bo[\mW_{\ell}]$ and $\btze_{\ell} = \bze[\mW_{\ell}\backslash \tmW_{\ell}]$ are the primal and dual (sub-)variables. The solution of Problem \eqref{eq37} is denoted as $((\bto_{\ell}^\tau)^{\boldsymbol{*}}, (\btze_{\ell}^\tau)^{\boldsymbol{*}})$, though again we may omit the index $\tau$ to ease the notation. An illustration of all the sets $\mV_{\ell}$, $\mW_{\ell}$, $N_{\mG}(\mW_{\ell})$, $\tmW_{\ell}$, and $\hat{\mW}_{\ell}$ is shown in \cref{figset}.

The subproblem \eqref{eq37} is essentially the truncation of the full problem \eqref{eq36} onto the set $\mW_{\ell}$. However, there is a key difference: only the \textit{internal constraints} --- those that fully depend on the variables within $\mathcal{W}_{\ell}$ --- are enforced in \eqref{eq37b}, while the \textit{coupling constraints} --- those that partially depend on the variables outside $\mathcal{W}_{\ell}$ --- are incorporated as an augmented Lagrangian-like regularization term in \eqref{eq37a}. As will be shown in \cref{lm41}, this formulation ensures the subproblems to satisfy the desired regularity conditions, provided that the conditions are satisfied by the full problem \eqref{eq36} and the parameter $\mu>0$ is large enough.

In each SQP iteration, we solve $M$ overlapping subproblems $\{\mS\mP_{\ell}^{\mu}\}_{\ell\in[M]}$ in parallel, and for each subproblem $\ell$, we retain only the solution $(\btosi[\mV_\ell], \btzesi[\mV_\ell])$ within the disjoint set $\mV_{\ell}$. Then, we compose all solution pieces to obtain the approximate Newton direction $(\tD\bx, \tD\bl)$ (in the $\tau$-th iteration). In particular, we introduce the following definition. For a subdomain variable $\bto_{\ell}$ (similar for $\btze_{\ell}$), we let $\bto_{\ell, k} \coloneqq \bto_{\ell}[\{k\}]$ be the variable associated with the node $k\in \mW_{\ell}$. This notation aims to distinguish $\bto_{\ell_1, k}$ and $\bto_{\ell_2, k}$ when both $k\in \mW_{\ell_1}$ and $k\in \mW_{\ell_2}$.

\begin{definition}[\textbf{Composition and Decomposition}]\label{df22}
Given $M$ subdomain variables $\{(\bto_{\ell},\btze_{\ell})\}_{\ell=1}^{M}$, we define the composition operator as $\mC(\{(\bto_{\ell},\btze_{\ell})\}_{\ell=1}^{M}) = (\bo,\bze)$, where for any node $k\in \mV$, $(\bo_k,\bze_k)  \coloneqq (\bto_{\ell,k}, \btze_{\ell,k})$ for (the unique) $\ell$ such that $k\in \mV_{\ell}$. Conversely, given a full-domain variable $(\bo,\bze)$, we define the decomposition operator as $\mD(\bo,\bze)\hskip-0.5pt =\hskip-0.5pt\{\mD_{\ell}(\bo,\bze)\}_{\ell=1}^{M}$ with $\mD_{\ell}(\bo,\bze)= (\bto_{\ell}, \btze_{\ell}) = (\bo[\mW_{\ell}], \bze[\mW_{\ell}\backslash\tilde{\mW}_{\ell}])$.
\end{definition}

By \cref{df22}, at the $\tau$-th iteration, we have $(\tD \bx^{\tau}, \tD \bl^{\tau}) = \mC(\{((\bto_{\ell}^\tau)^{\boldsymbol{*}}, (\btze_{\ell}^\tau)^{\boldsymbol{*}} )\}_{\ell=1}^M)$.

\begin{figure}
\setlength{\belowcaptionskip}{-6pt}
 \centering
\begin{subfigure}[b]{0.45\textwidth}
\centering
\includegraphics[height=3.4cm]{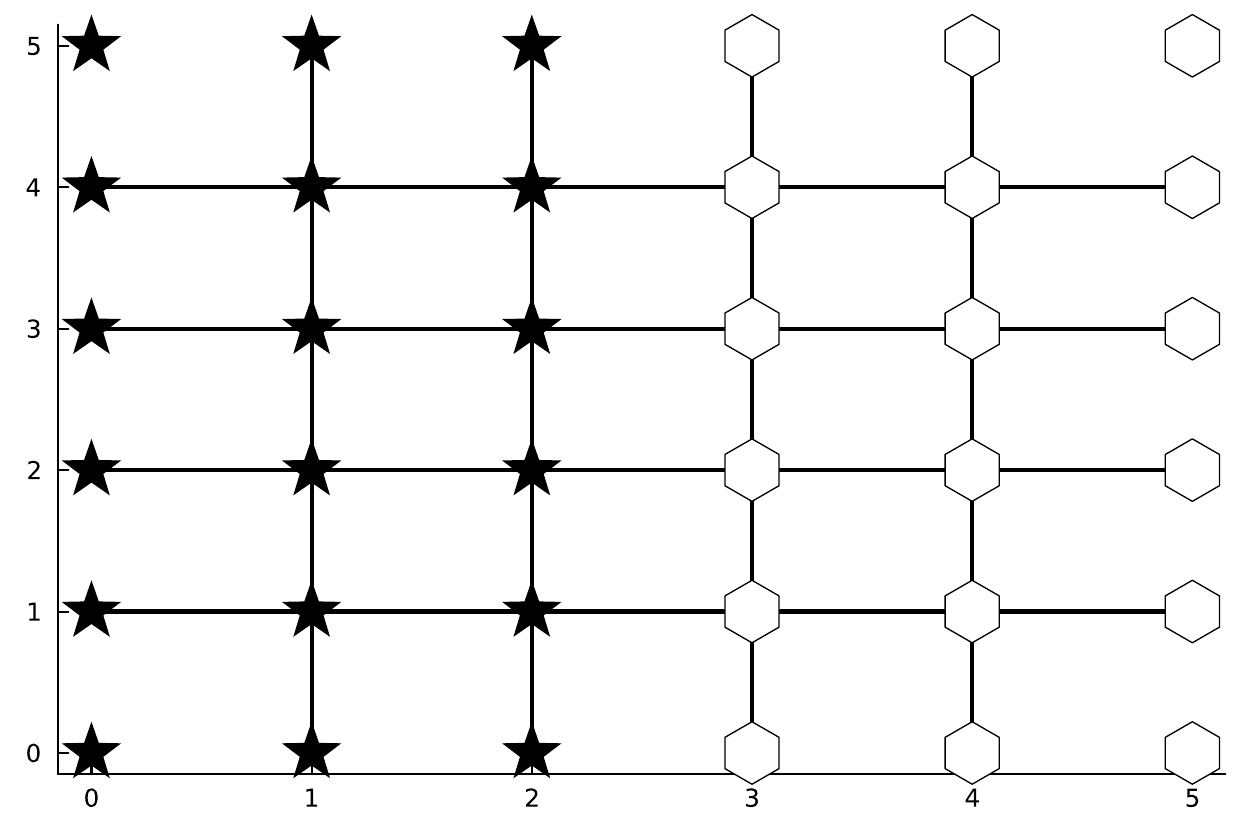}
\caption{$\mV_1$, $\mV_2$, and $\mV = \mV_1\cup\mV_2$}
\label{figset.a}
\end{subfigure}
\begin{subfigure}[b]{0.45\textwidth}
\centering
\includegraphics[height=3.4cm]{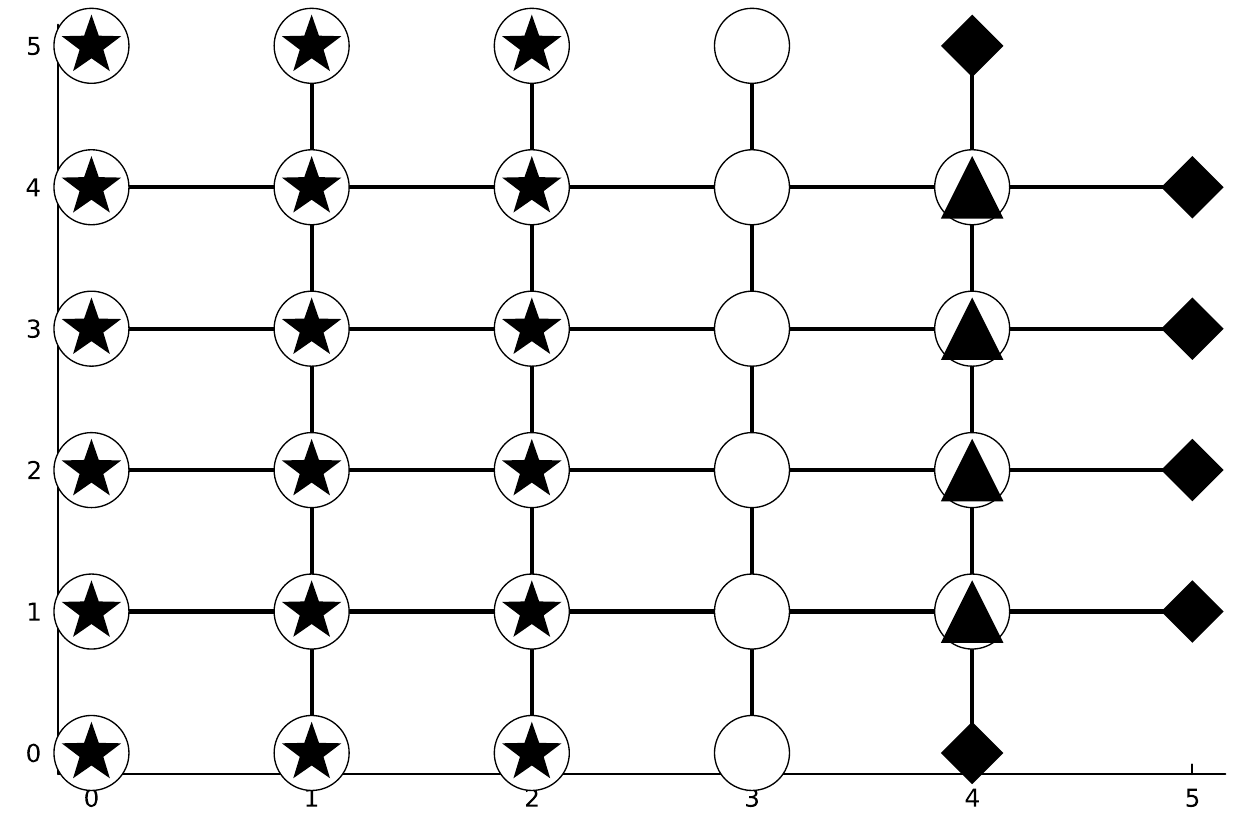}
\caption{$\mV_1$, $\mW_1$, $N_{\mG}(\mW_1)$, $\tilde{\mW}_1$, and $\hat{\mW}_1$}
\label{figset.b}
\end{subfigure}
\caption{Illustration of $\mV_i$, $\mW_i$, $N_{\mG}(\mW_i)$, $\tilde{\mW}_i$, and $\hat{\mW}_i$. We consider a two-dimensional graph divided into two subgraphs with the overlap size $b=2$. 
In \cref{figset.a}, star markers represent $\mV_1$ and hexagon markers represent $\mV_2$. In \cref{figset.b}, star markers represent $\mV_1$, circle markers represent $\mW_1$, diamond markers represent $N_{\mG}(\mW_1)$, and triangle markers represent $\tilde{\mW}_1$.~{\red The~union of the diamond- and triangle-marked nodes represents $\hat{\mW}_1=N_{\mG}(\mW_1)\cup\tilde{\mW}_1$.}}\label{figset}
\end{figure}

\vskip5pt

\paragraph{Step 3: stepsize selection with exact augmented Lagrangian}

After obtaining the approximate Newton direction $(\tD \bx^{\tau}, \tD \bl^{\tau})$, we then select a proper stepsize $\at$ by performing backtracking line search on a differentiable exact augmented Lagrangian merit function. Particularly, we update the primal-dual iterate as
\begin{equation}\label{eq32}
\begin{bmatrix}
\bx^{\tau+1}\\ \bl^{\tau+1}
\end{bmatrix} = \begin{bmatrix}
\bxt\\ \blt
\end{bmatrix} + \at\begin{bmatrix}
\tD \bxt\\ \tD \blt
\end{bmatrix},
\end{equation}
where $\at$ is selected to satisfy the Armijo condition for a parameter $\beta\in(0,0.5)$ as
\begin{equation}\label{eq34}
\mLe^{\tau+1} \leq \mLe^{\tau} + \beta \at\, \begin{bmatrix}
\nbx\mLe^{\tau}\\ \nbl\mLe^{\tau}
\end{bmatrix}^T\begin{bmatrix}
\tD \bxt\\ \tD \blt
\end{bmatrix},
\end{equation}
and $\mLe^{\tau} = \mLe(\bxt, \blt)$ (similar for $\mLe^{\tau+1}$, $\nabla \mLe^{\tau}$) is the exact augmented Lagrangian, defined as
\begin{equation}\label{eq33}
\mLe(\bx, \bl) = \mL(\bx, \bl) + \frac{\eta_1}{2}\|\nabla_{\bl}\mL(\bx, \bl)\|^2 + \frac{\eta_2}{2}\|\nabla_{\bx}\mL(\bx, \bl)\|^2
\end{equation}
with the penalty parameters $\eta = (\eta_1, \eta_2)$. Here, the first penalty biases the feasibility residual while the second penalty biases the optimality residual.
{\red
The quantities needed in the line search can also be evaluated in parallel without assembling global objects. In particular, $\mLe^{\tau}$ and $\mLe^{\tau+1}$ can be expressed as sums of local contributions over the disjoint partition $\{\mV_{\ell}\}_{\ell\in[M]}$, and the directional derivative term in \eqref{eq34} can be computed in the same way by summing local inner products restricted to the corresponding node sets. Therefore, each subdomain computes its local contributions independently, and the global quantities in \eqref{eq34} are obtained by aggregating these local values across subdomains.
}

Although there are many alternative non-differentiable $\ell_1/\ell_2$ merit functions \cite{Nocedal2006Numerical}, recent research has shown that the exact augmented Lagrangian \eqref{eq33} may offer certain benefits when applied to optimal control problems \cite{Na2023Fast, Na2021Global}, as well as to more general graph-structured problems as demonstrated in this paper. Roughly speaking, we desire a merit function that involves the dual variables to endure an approximation error in the dual component. Furthermore, we also desire a differentiable merit function that can overcome the Maratos effect and achieve fast local convergence (by accepting the unit stepsize) without the need of advanced enhancements of the method (e.g., second-order correction), as required by non-differentiable merit functions. The enhancements like second-order correction have to be decentralized in the parallel setting, significantly complicating the algorithm applicability. The detailed discussion regarding the choice of the merit function is deferred to \cref{thm51} and \cref{thm61}.

\begin{algorithm}
\caption{A Fast Overlapping Graph Decomposition Method}\label{algo2}
\begin{algorithmic}[1]
\State{\textbf{Input:} initialization $(\bx^0, \boldsymbol{\lambda}^0)$, number of subproblems $M$, scalars $\mu,\eta> 0$, $\beta\in(0,0.5)$;}
\For{$\tau=0, 1, 2, \cdots$}
\State Compute $\hHt, \Gt,$ and $\nabla \mLt$;
\For{$\ell=1, 2, \cdots, M$ \textbf{(in parallel)}}
\State Solve $\mS\mP_{\ell}^{\tau, \mu}$ in (\ref{eq37}) to obtain the solution $((\bto_{\ell}^\tau)^{\boldsymbol{*}}, (\btze_{\ell}^\tau)^{\boldsymbol{*}})$;
\EndFor
\State Let $(\tD \bx^{\tau}, \tD \bl^{\tau}) = \mC(\{((\bto_{\ell}^\tau)^{\boldsymbol{*}}, (\btze_{\ell}^\tau)^{\boldsymbol{*}} )\}_{\ell=1}^M)$;
\State Update the iterate as \eqref{eq32} by selecting $\at$ via \eqref{eq34};
\EndFor
\end{algorithmic}
\end{algorithm}

\section{Error Analysis of OGD}\label{sec3}

We have introduced how to approximately solve the Newton system in each SQP iteration by applying overlapping graph decomposition. Then, a natural question is how close the approximate direction is to the exact direction. In this section, we answer this question by providing an upper bound for the approximation error $(\tD \bxt-\Delta \bxt, \tD \blt-\Delta \blt)$. We will finally show in \cref{thm42} that, under standard mild regularity conditions, the error decays exponentially fast in terms of the overlap size $b$.

To study the approximation error, we leverage the sensitivity analysis of graph-structured linear-quadratic programs. In particular, as shown in \cref{lm41i}, the truncated exact direction on $\mW_{\ell}$, i.e., $\mD_{\ell}(\Delta \bxt, \Delta \blt)=\mD_{\ell}((\bo^\tau)^{\boldsymbol{*}}, (\bze^\tau)^{\boldsymbol{*}})$, is the solution of a linear-quadratic program that is closely related to the full problem \eqref{eq37}. 
{\red
Intuitively, once we restrict the full problem to the node set $\mW_{\ell}$, the influence of variables outside $\mW_{\ell}$ enters only through the boundary of $\mW_{\ell}$. This motivates introducing the boundary parameter $\bd_{\ell}$ and the parameterized subproblem $\mS\mP_{\ell}^{\tau,\mu}(\bd_{\ell})$ in \eqref{eq37_sec3}. For a particular choice $\bd_{\ell} = \bd_{\ell}^{\star}$ (given in \cref{lm41i}), the solution of $\mS\mP_{\ell}^{\tau,\mu}(\bd_{\ell}^{\star})$ coincides with the truncated exact direction $\mD_{\ell}(\Delta \bxt, \Delta \blt)$. On the other hand, the setup of $\bd_{\ell} = \bd_{\ell}^{\star}$ relies on knowledge of the true solution $((\bo^\tau)^{\boldsymbol{*}}, (\bze^\tau)^{\boldsymbol{*}})$ itself, which is unavailable to us. Our actual subproblem \eqref{eq37_sec3} instead corresponds to the choice of $\bd_{\ell}=\bnz$.~Therefore, the approximation error can be viewed as the effect of a boundary perturbation. By sensitivity analysis together with the exponential decay of sensitivity \cite{Na2020Exponential, Shin2022Exponential}, this boundary effect decreases rapidly as we move away from the boundary, which explains why increasing the overlap size improves the accuracy on the interior set $\mV_{\ell}$. 
}

Motivated by this discussion, we now introduce a parameterized version of Problem \eqref{eq37}, denoted as $\mS\mP_{\ell}^{\tau,\mu}(\bd_{\ell})$:
\begin{subequations}\label{eq37_sec3}
\begin{align}
\min_{\bto_{\ell}}\;\; & \frac{1}{2}\begin{bmatrix}
\bto_{\ell} \\ \bbo_{\ell}
\end{bmatrix}^T
\begin{bmatrix}
\hHt[\mW_{\ell}][\mW_{\ell}] & \hHt[\mW_{\ell}][\bar{\mW}_{\ell}]\\
\hHt[\bar{\mW}_{\ell}][\mW_{\ell}] & \hHt[\bar{\mW}_{\ell}][\bar{\mW}_{\ell}]
\end{bmatrix}\begin{bmatrix}
\bto_{\ell} \\ \bbo_{\ell}
\end{bmatrix} + \bto_{\ell}^T \nabla_{\bx} \mLt[\mW_{\ell}] \\
& \hskip0.5cm + \bbze_{\ell}^TG^\tau[\hat{\mW}_{\ell}][\mW_{\ell}]\bto_{\ell} + \frac{\mu}{2}\nbr{\bct[\hat{\mW}_{\ell}] + G^\tau[\hat{\mW}_{\ell}][\mW_{\ell}] \bto_{\ell}  + G^\tau[\hat{\mW}_{\ell}][\bar{\mW}_{\ell}] \bbo_{\ell}}^2, \notag\\
\text{s.t.}\;\; & \bct[\mW_{\ell}\backslash \tmW_{\ell}]  + \Gt[\mW_{\ell}\backslash \tmW_{\ell}][\mW_{\ell}] \bto_{\ell} = \bnz,\quad (\btze_{\ell}),
\end{align}
\end{subequations}
where $\tmW_{\ell}$, $\hat{\mW}_{\ell}$ are defined in \eqref{eq37}, $\bar{\mW}_{\ell} \coloneqq N_{\mG}(\mW_{\ell})\cup N_{\mG}\left(N_{\mG}[\mW_{\ell}]\right)$ is the set of external boundary nodes of $\mW_\ell$ within depth two, and $\bd_{\ell} = (\bbo_{\ell}, \bbze_{\ell}) = (\{\bbo_{\ell,j}\}_{j\in \bar{\mW}_{\ell}}, \{\bbze_{\ell,j}\}_{j\in \hat{\mW}_{\ell}})$ are primal-dual boundary parameters specified to the problem. Clearly, the solution of $\mS\mP_{\ell}^{\mu}(\bd_{\ell})$ depends on the parameters $\bd_{\ell}$, which we denote as $(\towt_{\ell}(\bd_{\ell}), \tzewt_{\ell}(\bd_{\ell}))$ (the iteration index $\tau$ is omitted). It is worth noting that when $\bd_{\ell}=\bnz$, $\mS\mP_{\ell}^{\mu} (\bnz)$ reduces to $\mS\mP_{\ell}^{\mu}$ in \eqref{eq37}.

We then introduce some preliminary assumptions.

\begin{assumption}[\textbf{Hessian modification}]\label{as41}
For any $\tau \ge 0$, we assume the modified Hessian $\hH^\tau$ satisfies $\bo^T\hHt\bo \geq \gamma_{H}\|\bo\|^2$ for any $\bo\in\{\bo: G^\tau\bo = \bnz\}$ for a constant $\gamma_{H}\in(0,1]$.
\end{assumption}

\begin{assumption}[\textbf{Linear independence constraint qualification}]\label{as42}
For any $\tau\geq 0$, we assume $\Gt(\Gt)^T \succeq \gamma_G I$ for a constant $\gamma_G\in(0,1]$.
\end{assumption}

\begin{assumption}[\textbf{Uniform boundedness}]\label{as44}
For any $\tau\geq 0$ and any nodes $i, j\in\mV$, we assume $\max\{\|\hHt_{i,j}\|, \|H^\tau_{i,j}\|, \|G^\tau_{i,j}\|\} \leq \Upsilon$ for a constant $\Upsilon \geq 1$, where $\hHt_{i,j}$, $H^\tau_{i,j}$, and $G^\tau_{i,j}$ are the $(i,j)$-block of $\hHt$, $H^\tau$, and $G^\tau$, respectively.
\end{assumption}

\begin{assumption}[\textbf{Polynomial graph}]\label{as45}
There are two polynomial functions $\texttt{poly}_{\mW}(\cdot)$ and $\texttt{poly}_{\mG}(\cdot)$ such that
\begin{subequations}
\begin{alignat}{2}
d_{\mG}(i,j) & \leq  \texttt{poly}_{\mW}(b),\quad && \forall i,j \in \mW_{\ell},\; \ell\in [M], \label{eqn:graph-1}\\
|\{j\in \mV:d_{\mG}(i,j) \leq d\}| &\le \texttt{poly}_{\mG}(d),\quad && \forall i\in \mV,\label{eqn:graph-2}
\end{alignat}
\end{subequations}
where $b$ is the overlap size (cf. \Cref{sec2}) and $d$ is any integer.
\end{assumption}

Assumptions \ref{as41}, \ref{as42}, and \ref{as44} are standard assumptions in SQP literature \cite{Bertsekas1996Constrained, Gould2000SQP} and are crucial for sensitivity analysis of graph-structured problems \cite{Shin2022Exponential}. In particular, Assumptions \ref{as41} and \ref{as42} ensure that the quadratic program \eqref{eq36} has a unique global solution \cite[Lemma 16.1]{Nocedal2006Numerical}, and Assumption \ref{as44} implies a boundedness condition on the full matrices $(\hHt\hskip-0.45pt, H^\tau\hskip-0.45pt, G^\tau)$. We note that the conditions on the upper and lower bound constants, $\Upsilon\geq 1\geq \max(\gamma_H, \gamma_G)$, are inessential (similar for other constants later), which are only used to simplify the presentation. Without such conditions, our results still hold by replacing $\Upsilon, \gamma_H, \gamma_G$ with $\max(\Upsilon,1)$, $\min(\gamma_H,1)$, $\min(\gamma_G,1)$, respectively.

To satisfy Assumption \ref{as41}, we employ a structure-preserving modification procedure for the Lagrangian Hessian $H^\tau = \nabla_{\bx}^2\mL^\tau$. One simple example is the Levenberg-type regularization \cite{Dunn1989Efficient}: $\hHt = H^\tau + (\gamma_H + \|H^\tau\|) I$. Assumption \ref{as42} is equivalent to assuming $G^\tau$ to have full row rank with the least singular value (uniformly) bounded away from zero. Furthermore, we should note that Assumption \ref{as44} actually only needs to be imposed on blocks with adjacent nodes, as both the Hessian $H^\tau$ and the Jacobian $G^\tau$ from Problem \eqref{eq21} exhibit a nice banded structure (so does $\hHt$):
\begin{equation*}
H^\tau_{i,j} = \bnz \;\;\;\; \text{if }\;\; d_\mG(i,j)>2, \quad \quad\quad G^\tau_{i,j} = \bnz \;\;\;\; \text{if }\;\; d_\mG(i,j)>1.
\end{equation*}
With the above banded structure, we know that the block-wise boundedness implies the full matrix boundedness (with a bound independent of the graph size). Specifically, we apply Assumptions \ref{as44}, \ref{as45} and \cite[Lemma 5.10]{Shin2022Exponential}, and have for any $\tau\geq 0$,
\begin{equation}\label{as44-e}
\max\{\|\hHt\|, \|H^\tau\|, \|G^\tau\|\} \leq \Upsilon\cdot \polyG(1)^2 \eqqcolon \hU.
\end{equation}

Assumption \ref{as45} imposes mild conditions on the graph topology. In particular, \eqref{eqn:graph-1} ensures that the diameter of each overlapping subdomain $\mW_\ell$ is bounded by a polynomial of the overlap size, and \eqref{eqn:graph-2} ensures that the graph $\mG$ can only grow polynomially. Combining \eqref{eqn:graph-1} and \eqref{eqn:graph-2}, we know for any $\ell\in[M]$,
\begin{equation}\label{nequ:8}
|\mW_{\ell}| \leq \texttt{poly}_{\mG}(\texttt{poly}_{\mW}(b)) \eqqcolon \texttt{poly}_{\mG\circ\mW}(b). 
\end{equation}
We mention that \eqref{eqn:graph-2} excludes tree graphs, which grow exponentially as seen in multi-stage stochastic programs. Decomposing (scenario) tree structures that can break their curse of dimensionality is a challenging topic. The difficulty lies in the fact that the error induced by decomposition perturbations decays only linearly along the tree branches, which cannot govern the speed of their growth rate. A similar bottleneck has been observed in \cite{Shin2023Optimal} and we defer the inclusion of tree structures to future work.
{\red That being said, \eqref{eqn:graph-1}--\eqref{eqn:graph-2} are satisfied by a broad range of practically important graphs, including: (1) chain or path graphs arising in sequential decision-making and dynamic programs; (2) grid graphs (and other bounded-degree meshes) commonly encountered in discretizations of PDE-constrained problems (e.g., Figure \ref{fig31}); and (3) sparse power-grid networks, where buses are modeled as nodes and transmission lines as edges, so that node degrees are modest and neighborhood sizes grow at most~\mbox{polynomially}~with distance.}
Also, it is important to note that all our established bounds are~independent~of the graph size $|\mV|$ and the number of subproblems $M$.


Given Assumptions \ref{as41}--\ref{as45}, we are able to establish the existence and uniqueness of the solution to subproblem $\mS\mP_{\ell}^{\mu}(\bd_{\ell})$ in the following lemma, which holds for any iteration $\tau\geq 0$. (The iteration index $\tau$ is omitted here and there.)

\begin{lemma}\label{lm41}
If Assumptions \ref{as41}--\ref{as45} hold for the full Problem \eqref{eq36}, then for any $\ell\in[M]$ and any boundary variables $\bd_{\ell}$, the subproblem $\mS\mP_{\ell}^{\mu}(\bd_{\ell})$ with $\mu \geq \hat{\mu} \coloneqq4\hU^2/(\gamma_G\,\gRH)$ will also satisfy these assumptions. In particular,
\begin{enumerate}[label=(\alph*),topsep=0pt]
\setlength\itemsep{0.0em}
\item the Jacobian $G_\ell \coloneqq \Gt[\mW_{\ell}\backslash \tmW_{\ell}][\mW_{\ell}]$ satisfies $G_\ell G_\ell^T\succeq \gamma_G I$.
\item the Hessian $\hHm_\ell \coloneqq \hHt[\mW_\ell][\mW_\ell] + \mu\cdot (G^\tau[\hat{\mW}_{\ell}][\mW_{\ell}])^TG^\tau[\hat{\mW}_{\ell}][\mW_{\ell}]$ satisfies 
\begin{equation*}
\bto_{\ell}^T\hHm_\ell\bto_\ell \geq \gamma_{H}\|\bto_\ell\|^2/2, \quad \text{ for any } \quad \bto_{\ell}\in\{\bto_{\ell}: G_\ell \bto_{\ell} = \bnz\}.
\end{equation*}
\end{enumerate}
This implies that $\mS\mP_{\ell}^{\mu}(\bd_{\ell})$ has a unique global solution.
\end{lemma}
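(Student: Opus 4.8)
The plan is to identify every matrix appearing in $\mS\mP_\ell^\mu(\bd_\ell)$ as a block (or a restriction) of the corresponding full-problem matrix, so that Assumptions \ref{as41}--\ref{as42} transfer almost verbatim. The structural fact that makes this work is \emph{locality} of the constraints: if $i\in\mW_\ell\backslash\tmW_\ell$, then $N_\mG[i]\subseteq\mW_\ell$ --- a neighbor $j\notin\mW_\ell$ of $i$ would force $i\in N_\mG(N_\mG(\mW_\ell))\cap\mW_\ell=\tmW_\ell$, a contradiction --- so $\bc_i$ depends only on the variables of $\mW_\ell$, i.e., the $i$-th block row of $\Gt$ is supported on the columns indexed by $\mW_\ell$. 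Dually, if a full-domain vector $\bo$ is supported on $\mW_\ell$, then $\Gt\bo$ is supported on $N_\mG[\mW_\ell]=(\mW_\ell\backslash\tmW_\ell)\cup\hat{\mW}_\ell$, a disjoint union.

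For part (a), I would note that, by locality, $G_\ell=\Gt[\mW_\ell\backslash\tmW_\ell][\mW_\ell]$ arises from $\Gt[\mW_\ell\backslash\tmW_\ell][\mV]$ by deleting already-zero columns, so $G_\ell G_\ell^T$ is exactly the principal submatrix of $\Gt(\Gt)^T$ indexed by $\mW_\ell\backslash\tmW_\ell$; testing Assumption \ref{as42} against zero-padded vectors shows every such principal submatrix is $\succeq\gamma_G I$. For part (b), I would fix $\bto_\ell$ with $G_\ell\bto_\ell=\bnz$, let $\bo$ be its zero-extension to $\mV$, and first establish the identity
\begin{equation*}
\bto_\ell^T\hHm_\ell\bto_\ell \;=\; \bo^T\hHt\bo \;+\; \mu\,\|\Gt\bo\|^2 ,
\end{equation*}
which follows because $\bo$ is supported on $\mW_\ell$ (so the $\hHt$-quadratic form reduces to the $[\mW_\ell][\mW_\ell]$ block) and because $G_\ell\bto_\ell=\bnz$ kills the $\mW_\ell\backslash\tmW_\ell$ block of $\Gt\bo$, leaving precisely the $\hat{\mW}_\ell$ block that the penalty of \eqref{eq37} involves. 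Then I would split $\bo=\bo_1+\bo_2$ orthogonally with $\bo_1\in\ker\Gt$ and $\bo_2\in\mathrm{range}((\Gt)^T)$, apply Assumption \ref{as41} to get $\bo_1^T\hHt\bo_1\ge\gamma_H\|\bo_1\|^2$, use $\Gt\bo=\Gt\bo_2$ and Assumption \ref{as42} to get $\|\Gt\bo\|^2\ge\gamma_G\|\bo_2\|^2$, and bound the cross term and the $\bo_2$-term of $\bo^T\hHt\bo$ by $\|\hHt\|\le\hU$ (from \eqref{as44-e}) together with Young's inequality; this produces a lower bound of the form $\tfrac{\gamma_H}{2}\|\bo_1\|^2+(\mu\gamma_G-2\hU^2/\gamma_H-\hU)\|\bo_2\|^2$, and $\mu\ge\hat\mu$ (using $\hU\ge1\ge\gamma_H$, so the $\bo_2$-coefficient is $\ge\gamma_H/2$) makes it at least $\tfrac{\gamma_H}{2}\|\bo\|^2=\tfrac{\gamma_H}{2}\|\bto_\ell\|^2$.

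Finally, since the boundary parameters $\bd_\ell$ enter \eqref{eq37_sec3} only through terms linear in $\bto_\ell$ and the constraint does not depend on $\bd_\ell$, the objective Hessian of $\mS\mP_\ell^\mu(\bd_\ell)$ in $\bto_\ell$ is exactly $\hHm_\ell$; with parts (a)--(b) in hand, \cite[Lemma~16.1]{nocedal2006quadratic} yields a unique global solution, and Assumptions \ref{as44}--\ref{as45} carry over at once because all matrices of $\mS\mP_\ell^\mu(\bd_\ell)$ are blocks of $(\hHt, H^\tau, \Gt)$ up to the fixed $\mu$-scaling and $\mW_\ell$ induces a subgraph of $\mG$. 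I expect the main obstacle to be the bookkeeping behind the displayed identity: one must track precisely which block rows of $\Gt\bo$ vanish under the zero-extension and under $G_\ell\bto_\ell=\bnz$, and confirm that the survivors are exactly the rows indexed by $\hat{\mW}_\ell$ penalized in \eqref{eq37} --- this is the step where the particular choice of the boundary sets $\tmW_\ell$ and $\hat{\mW}_\ell$ in the subproblem formulation is essential.
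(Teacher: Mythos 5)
Your proof is correct and follows essentially the same route as the paper's: part (a) rests on the observation that, by locality of the constraints, $G_\ell G_\ell^T$ is a principal submatrix of $\Gt(\Gt)^T$, and part (b) rests on the identity $\bto_{\ell}^T\hHm_\ell\bto_{\ell} = \bo^T\hHt\bo + \mu\|\Gt\bo\|^2$ for the zero-extension $\bo$ of $\bto_{\ell}$ to $\mV$ (the paper writes the right-hand side as the quadratic form of the $[\mW_{\ell}][\mW_{\ell}]$ principal submatrix of $\hHt+\mu(\Gt)^T\Gt$). The only difference is that where you prove the lower bound $\bo^T\hHt\bo + \mu\|\Gt\bo\|^2 \ge \gamma_{H}\|\bo\|^2/2$ inline via the orthogonal splitting $\bo=\bo_1+\bo_2$ and Young's inequality, the paper cites \cite[Lemma~5.6]{shin2022exponential}, which requires $\mu \ge (2\hU^2/\gamma_{H}+\gamma_{H}+\hU)/\gamma_G$ --- a threshold your computation reproduces and which is indeed implied by $\mu\ge\hat{\mu}$ since $\hU\ge 1\ge\gamma_{H}$.
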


\begin{proof}
See \cref{alm41}.
\end{proof}

\cref{lm41} shows that the subproblems enjoy the same regularity conditions as those of the full problem. In fact, Assumptions \ref{as41}--\ref{as45} are critical since they imply an exponential decay structure for the KKT matrix inverse, which plays a key role when analyzing the approximation error of Newton directions. We note that \cite[Lemma 2]{Na2023Superconvergence} established the decay result for optimal control problems (i.e., linear graphs), while we study here a more general graph. We first introduce the KKT matrix of $\mS\mP_\ell^\mu(\bd_\ell)$ and its inverse partition as follows:
\begin{equation}\label{eq64}
K_{\ell}^\mu \coloneqq \begin{bmatrix}
\hHm_{\ell} & G_{\ell}^T\\
G_{\ell} & \bnz
\end{bmatrix}, \quad\quad \quad (K_\ell^\mu)^{-1}\coloneqq \begin{bmatrix}
(K_\ell^\mu)^{-1}_1 & \{(K_\ell^\mu)^{-1}_2\}^T\\
(K_\ell^\mu)^{-1}_2 & (K_\ell^\mu)^{-1}_3
\end{bmatrix}.
\end{equation}
For any nodes $i, j\in\mW_\ell$ and $k, v\in \mW_{\ell}\backslash \tmW_{\ell}$, we let $[(K_\ell^\mu)^{-1}_1]_{i,j}$, $[(K_\ell^\mu)^{-1}_2]_{k,j}$, and $[(K_\ell^\mu)^{-1}_3]_{k,v}$ denote the $(i,j)$, $(k,j)$, and $(k,v)$-blocks of the submatrices $(K_\ell^\mu)^{-1}_1$, $(K_\ell^\mu)^{-1}_2$, and $(K_\ell^\mu)^{-1}_3$, respectively. With the above partition, we present the structure of $(K_\ell^\mu)^{-1}$ in the next lemma.

\begin{lemma}[\textbf{Structure of the KKT inverse}]\label{lm61}
Suppose Assumptions \ref{as41}--\ref{as45} hold for the full Problem \eqref{eq36} and $\mu \geq \hat{\mu}$, then for any nodes $i, j\in\mW_\ell$ and $k, v\in \mW_{\ell}\backslash \tmW_{\ell}$, we have
\begin{equation}\label{eq66}
\small \|[(K_\ell^\mu)^{-1}_1]_{i,j}\| \le \Upsilon_\mu \rho_\mu^{d_{\mG}(i,j)}, \quad \;\; \|[(K_\ell^\mu)^{-1}_2]_{k,j}\| \le \Upsilon_\mu \rho_\mu^{d_{\mG}(k,j)},\quad\;\; \|[(K_\ell^\mu)^{-1}_3]_{k,v}\| \le \Upsilon_\mu \rho_\mu^{d_{\mG}(k,v)},
\end{equation}
where $\Upsilon_\mu = \frac{\Upsilon_{K}}{\gamma_{K}^2\rho_\mu^2}$ and $\rho_\mu = \left(\frac{\Upsilon_{K}^2 - \gamma_{K}^2}{\Upsilon_{K}^2 + \gamma_{K}^2}\right)^{\frac{1}{4}}\in(0, 1)$ with $\Upsilon_{K} = 1.5\mu\hU^2$ and $\gamma_{K} = \frac{\gamma_{H}^3\gamma_G^2}{75\Upsilon_{K}^6}$.

\end{lemma}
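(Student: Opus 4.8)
The plan is to extract from \Cref{lm41} a uniform, graph-size-independent two-sided spectral bound for $K_\ell^\mu$ and then convert it, using the graph-metric bandedness of $K_\ell^\mu$, into the stated exponential block decay via a Neumann-series expansion. First I would establish $\sigma_{\min}(K_\ell^\mu)\ge\gamma_K$ and $\|K_\ell^\mu\|\le\Upsilon_K$. The upper bound is routine: by the triangle inequality $\|K_\ell^\mu\|\le\|\hHm_\ell\|+\|G_\ell\|\le\|\hHt\|+\mu\|G^\tau\|^2+\|G^\tau\|$, and inserting the full-matrix norm bound \eqref{as44-e} together with $\mu\ge\hat\mu\ge1$ gives $\|K_\ell^\mu\|\le 1.5\mu\hU^2=\Upsilon_K$. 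The lower bound is the substantive point: $K_\ell^\mu$ is an indefinite saddle-point matrix whose blocks satisfy, by \Cref{lm41}, $\hHm_\ell\succeq(\gamma_H/2)I$ on $\ker G_\ell$ and $G_\ell G_\ell^T\succeq\gamma_G I$, so a Rusten--Winther-type estimate for the extreme eigenvalues of KKT matrices produces a lower bound on $\sigma_{\min}(K_\ell^\mu)$ that is polynomial in $(\gamma_H,\gamma_G)$ and inverse-polynomial in $\|K_\ell^\mu\|\le\Upsilon_K$; chasing the constants through that estimate yields exactly $\gamma_K=\gamma_H^3\gamma_G^2/(75\Upsilon_K^6)$, and $\rho_\mu\in(0,1)$ is then immediate from $\gamma_K>0$.

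Next I would record that $K_\ell^\mu$ is banded in the graph metric: since $\hHt_{i,j}=\bnz$ whenever $d_{\mG}(i,j)>2$, $G^\tau_{i,j}=\bnz$ whenever $d_{\mG}(i,j)>1$, and the penalty block $\mu\,(G^\tau[\hat{\mW}_\ell][\mW_\ell])^T G^\tau[\hat{\mW}_\ell][\mW_\ell]$ therefore also vanishes for $d_{\mG}(i,j)>2$, every nonzero block of $K_\ell^\mu$ joins nodes within graph distance $2$; hence $(K_\ell^\mu)^2$ is symmetric positive definite, has nonzero blocks only between nodes within distance $4$, and has spectrum in $[\gamma_K^2,\Upsilon_K^2]$. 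Applying the Neumann series $((K_\ell^\mu)^2)^{-1}=c\sum_{t\ge0}(I-c(K_\ell^\mu)^2)^t$ with $c=2/(\gamma_K^2+\Upsilon_K^2)$, the $t$-th summand has operator norm at most $\big((\Upsilon_K^2-\gamma_K^2)/(\Upsilon_K^2+\gamma_K^2)\big)^t$ and its $(i,j)$-block vanishes unless $d_{\mG}(i,j)\le 4t$; summing over $t\ge\lceil d_{\mG}(i,j)/4\rceil$ yields $\|[((K_\ell^\mu)^2)^{-1}]_{i,j}\|\le\gamma_K^{-2}\rho_\mu^{\,d_{\mG}(i,j)}$, with the fourth root in $\rho_\mu$ reflecting the width-$4$ banding. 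Then, writing $(K_\ell^\mu)^{-1}=K_\ell^\mu\cdot((K_\ell^\mu)^2)^{-1}$ and expanding block-wise, $[(K_\ell^\mu)^{-1}]_{i,j}=\sum_{w\,:\,d_{\mG}(i,w)\le 2}[K_\ell^\mu]_{i,w}\,[((K_\ell^\mu)^2)^{-1}]_{w,j}$; bounding each summand by $\Upsilon_K\gamma_K^{-2}\rho_\mu^{-2}\rho_\mu^{\,d_{\mG}(i,j)}$ (using $d_{\mG}(w,j)\ge d_{\mG}(i,j)-2$) and the number of admissible $w$ by $\texttt{poly}_{\mG}(2)$ from \Cref{as45}, and absorbing the polynomial factor into the constant, one obtains $\|[(K_\ell^\mu)^{-1}]_{i,j}\|\le\Upsilon_\mu\,\rho_\mu^{\,d_{\mG}(i,j)}$ with $\Upsilon_\mu=\Upsilon_K/(\gamma_K^2\rho_\mu^2)$. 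The three estimates in \eqref{eq66} follow at once, since $(K_\ell^\mu)^{-1}_1$, $(K_\ell^\mu)^{-1}_2$, and $(K_\ell^\mu)^{-1}_3$ are precisely the sub-blocks of $(K_\ell^\mu)^{-1}$ indexed by the primal nodes $\mW_\ell$ and the dual nodes $\mW_\ell\backslash\tmW_\ell$, so the per-block bound just proved specializes to each of them.

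The step I expect to be the bottleneck is the uniform lower bound $\sigma_{\min}(K_\ell^\mu)\ge\gamma_K$: one must turn the two-sided information of \Cref{lm41} into an explicit, dimension-free lower eigenvalue bound for an indefinite saddle-point matrix, and it is the slack in that estimate that forces the large power $\Upsilon_K^6$ (hence the strong dependence on $\mu$) inside $\gamma_K$. A secondary technical care is counting the graph-metric banding consistently through the mixed primal--dual block layout of $K_\ell^\mu$, and choosing $\gamma_K$ conservatively enough that the $\texttt{poly}_{\mG}(2)$ factor from the last step is genuinely absorbed so the final constant is exactly $\Upsilon_\mu=\Upsilon_K/(\gamma_K^2\rho_\mu^2)$.
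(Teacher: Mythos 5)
Your overall strategy is the same as the paper's: bound $\|K_\ell^\mu\|\le\Upsilon_K$ by the triangle inequality and \eqref{as44-e}, obtain a uniform lower bound $\sigma_{\min}(K_\ell^\mu)\ge\gamma_K$ from a saddle-point eigenvalue estimate using the two conclusions of \cref{lm41}, and then convert the two spectral bounds plus the graph-metric bandwidth~$2$ of $K_\ell^\mu$ into exponential block decay of the inverse. The difference is in how the last two steps are executed. For $\sigma_{\min}$ the paper does not re-derive a Rusten--Winther-type bound; it cites an explicit formula (Theorem~A.3 of the reference on near-optimality) and then spends the bulk of the proof on elementary constant-chasing to compress that formula into $\gamma_K=\gamma_H^3\gamma_G^2/(75\Upsilon_K^6)$ --- your proposal correctly identifies this as the substantive step but leaves it at the level of ``such an estimate exists and the constants work out,'' which is weaker than what the paper actually writes down. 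For the decay step the paper simply invokes Theorem~3.6 of the exponential-decay reference as a black box, whereas you reconstruct its proof via the Neumann series for $((K_\ell^\mu)^2)^{-1}$; that reconstruction is sound and is indeed the standard Demko-type argument, but as you yourself note, the final multiplication $[(K_\ell^\mu)^{-1}]_{i,j}=\sum_{w:\,d_\mG(i,w)\le2}[K_\ell^\mu]_{i,w}[((K_\ell^\mu)^2)^{-1}]_{w,j}$ leaves an extra combinatorial factor of order $\texttt{poly}_\mG(2)$ (the number of admissible $w$) that is \emph{not} absorbed into $\Upsilon_\mu=\Upsilon_K/(\gamma_K^2\rho_\mu^2)$ as stated; your self-contained version therefore proves the lemma only with $\Upsilon_\mu$ inflated by that factor, which is qualitatively harmless (it propagates as a benign polynomial into $\Upsilon_{\mu,b}$ downstream) but does not match the exact constant in \eqref{eq66}. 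In short: your route is more self-contained and more transparent about where $\rho_\mu$ and the bandwidth enter, at the cost of a slightly looser constant and a less explicit $\sigma_{\min}$ computation; the paper's route is shorter because it outsources both nontrivial ingredients to cited results and only does arithmetic.
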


\begin{proof}
	
By \eqref{as44-e}, \cref{lm41}, and the facts that $\mu\geq 4$ and $\hU\geq 1$, we know
\begin{equation}\label{nequ:3}
\|K_\ell^\mu\| \leq \|\hH_\ell^\mu\| + \|G_\ell\| \leq 2\hU + \mu\hU^2 \leq 1.5\mu\hU^2 \eqqcolon \Upsilon_{K}.
\end{equation}
By \cite[Theorem A.3]{Shin2023Optimal}, we know the least singular value of $K_\ell^\mu$, denoted by $\sigma_{\min}(K_\ell^\mu)$, satisfies
\begin{equation}\label{nequ:2}
\sigma_{\min}(K_\ell^\mu)\geq \rbr{\frac{4}{\gamma_{H}} + \rbr{1+\frac{8\Upsilon_{K}}{\gamma_{H}} + \frac{16\Upsilon_{K}^2}{\gamma_{H}^2}}\frac{\Upsilon_{K}(1+ \nu\Upsilon_{K})}{\gamma_G} + \nu}^{-1}
\end{equation}
with $\nu = (8\Upsilon_{K}^2/\gamma_{H} + \gamma_{H} + 2\Upsilon_{K})/(2\gamma_G)$. Since $\gamma_{H}\leq 1\leq \Upsilon_{K}^2/6^2$ and $\Upsilon_{K} \leq \Upsilon_{K}^2/6$, we know $\nu \leq 4.2\Upsilon_{K}^2/(\gamma_{H}\gamma_G)$. Plugging it into \eqref{nequ:2} and using the facts that 
\begin{align*}
1+\frac{8\Upsilon_{K}}{\gamma_{H}} + \frac{16\Upsilon_{K}^2}{\gamma_{H}^2} & \leq \rbr{\frac{1}{36}+ \frac{4}{3} + 16} \frac{\Upsilon_{K}^2}{\gamma_{H}^2} \leq 17.4 \frac{\Upsilon_{K}^2}{\gamma_{H}^2},\\
\frac{\Upsilon_{K}(1+ \nu\Upsilon_{K})}{\gamma_G} & \leq \frac{\Upsilon_{K}}{\gamma_G} + \frac{4.2\Upsilon_{K}^4}{\gamma_{H}\gamma_G^2} \leq \rbr{\frac{1}{6^3} + 4.2}\frac{\Upsilon_{K}^4}{\gamma_{H}\gamma_G^2}\leq 4.21\frac{\Upsilon_{K}^4}{\gamma_{H}\gamma_G^2},
\end{align*}
we can obtain
\begin{multline}\label{nequ:4}
\sigma_{\min}(K_\ell^\mu) \geq \rbr{\frac{4}{\gamma_{H}} + 17.4\cdot4.21\frac{\Upsilon_{K}^6}{\gamma_{H}^3\gamma_G^2} + \frac{4.2\Upsilon_{K}^2}{\gamma_{H}\gamma_G}}^{-1}\\
\geq \cbr{\rbr{\frac{4}{6^6} + 17.4\cdot4.21 + \frac{4.2}{6^4}} \frac{\Upsilon_{K}^6}{\gamma_{H}^3\gamma_G^2} }^{-1} \geq \frac{\gamma_{H}^3\gamma_G^2}{75\Upsilon_{K}^6} \eqqcolon \gamma_{K}.
\end{multline}
With the bounds in \eqref{nequ:3} and \eqref{nequ:4}, the result is obtained from \cite[Theorem 3.6]{Shin2022Exponential} by noting that $K_\ell^\mu$ has a bandwidth $2$. We complete the proof.
\end{proof}

\Cref{lm61} suggests that for any nodes $i,j\in \mW_{\ell}$, the $(i,j)$-block of the KKT matrix inverse (of the subproblem) has an exponentially decaying magnitude in terms of the distance $d_{\mG}(i,j)$ on the graph.
We will rely on this property to study the difference between two solutions with different boundary variables, $(\towt_{\ell}(\bd_{\ell}), \tzewt_{\ell}(\bd_{\ell})) - (\towt_{\ell}(\bd_{\ell}'), \tzewt_{\ell}(\bd_{\ell}'))$. Before that, we show in the next lemma that if the boundary variables are set to the components of the exact Newton direction $(\Delta\bx, \Delta\bl)$, i.e., $\bd_\ell^{\boldsymbol{*}} \coloneqq (\{\Delta\bx_j\}_{j\in\bar{\mW}_\ell},  \{\Delta\bl_j\}_{j\in \hat{\mW}_\ell})$, then the subproblem $\mS\mP_\ell^\mu(\bd_\ell^{\boldsymbol{*}})$ will recover the truncated exact Newton direction $\mD_{\ell}(\Delta\bx, \Delta\bl)$ .

\begin{lemma}\label{lm41i}
Suppose Assumptions \ref{as41}--\ref{as45} hold for the full Problem \eqref{eq36} and $\mu\geq \hat{\mu}$. Let
$(\bos,\bzes) = (\Delta\bx, \Delta\bl)$ be the solution of \eqref{eq36}. Then, for any $\ell\in[M]$, $\mD_{\ell}(\bos, \bzes)$ is the solution of $\mS\mP_\ell^\mu(\bd_\ell^{\boldsymbol{*}})$ in \eqref{eq37_sec3} with $\bd_\ell^{\boldsymbol{*}} = (\bos[\bar{\mW}_\ell], \bzes[\hat{\mW}_\ell])$.

\end{lemma}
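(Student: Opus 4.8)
The plan is to verify the KKT conditions of $\mathcal{SP}_\ell^\mu(\bd_\ell^{\boldsymbol{*}})$ directly at the candidate point $\mathcal{D}_\ell(\bos,\bzes) = (\bos[\mW_\ell], \bzes[\mW_\ell\backslash\tmW_\ell])$. Since \cref{lm41} guarantees that $\mathcal{SP}_\ell^\mu(\bd_\ell^{\boldsymbol{*}})$ is a convex quadratic program with a unique global solution (its Hessian is positive definite on the null space of $G_\ell$ and $G_\ell$ has full row rank), it suffices to exhibit a primal-dual pair satisfying stationarity and primal feasibility. I would take the primal candidate $\bto_\ell = \bos[\mW_\ell]$ with the fixed boundary block $\bbo_\ell = \bos[\bar{\mW}_\ell]$, and for the dual I would take $\btze_\ell = \bzes[\mW_\ell\backslash\tmW_\ell]$ together with the fixed boundary dual block $\bbze_\ell = \bzes[\hat{\mW}_\ell]$.

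First I would write out the KKT system of the full Problem \eqref{eq36}, namely \eqref{eq31}, in block form restricted to the rows indexed by $\mW_\ell$. The stationarity rows of \eqref{eq31} associated with nodes in $\mW_\ell$ read $\hHt[\mW_\ell][\mathcal{V}]\bos + (\Gt[\mathcal{V}][\mW_\ell])^T\bzes + \nabla_\bx\mLt[\mW_\ell] = \bnz$, and the feasibility rows associated with $\mW_\ell\backslash\tmW_\ell$ read $\bct[\mW_\ell\backslash\tmW_\ell] + \Gt[\mW_\ell\backslash\tmW_\ell][\mathcal{V}]\bos = \bnz$. The key bookkeeping step is to use the banded sparsity of $\hHt$ (zero beyond graph distance $2$) and $\Gt$ (zero beyond distance $1$): this means $\hHt[\mW_\ell][\mathcal{V}]\bos$ only sees nodes in $N^2_{\mG}[\mW_\ell] = \mW_\ell\cup\bar{\mW}_\ell$, so it equals $\hHt[\mW_\ell][\mW_\ell]\bto_\ell + \hHt[\mW_\ell][\bar{\mW}_\ell]\bbo_\ell$; similarly $\Gt[\mW_\ell\backslash\tmW_\ell][\mathcal{V}]\bos$ only sees $\mW_\ell$ (by definition of $\tmW_\ell$, the internal constraints indexed by $\mW_\ell\backslash\tmW_\ell$ depend only on variables in $\mW_\ell$), so the feasibility rows become exactly \eqref{eq37b}. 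For the dual/Jacobian-transpose term $(\Gt[\mathcal{V}][\mW_\ell])^T\bzes$, the constraint nodes that can couple to variables in $\mW_\ell$ are exactly those in $\hat{\mW}_\ell\cup(\mW_\ell\backslash\tmW_\ell)$ — the internal constraints $\mW_\ell\backslash\tmW_\ell$ and the boundary constraints $\hat{\mW}_\ell$ — giving $(\Gt[\mW_\ell\backslash\tmW_\ell][\mW_\ell])^T\btze_\ell + (\Gt[\hat{\mW}_\ell][\mW_\ell])^T\bbze_\ell$.

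Next I would compute the gradient of the subproblem objective \eqref{eq37_sec3} with respect to $\bto_\ell$ at the candidate point and add the Lagrange term $(\btze_\ell)$-constraint contribution. The gradient is $\hHt[\mW_\ell][\mW_\ell]\bto_\ell + \hHt[\mW_\ell][\bar{\mW}_\ell]\bbo_\ell + \nabla_\bx\mLt[\mW_\ell] + (G^\tau[\hat{\mW}_\ell][\mW_\ell])^T\bbze_\ell + \mu(G^\tau[\hat{\mW}_\ell][\mW_\ell])^T\big(\bct[\hat{\mW}_\ell] + G^\tau[\hat{\mW}_\ell][\mW_\ell]\bto_\ell + G^\tau[\hat{\mW}_\ell][\bar{\mW}_\ell]\bbo_\ell\big) + (G_\ell)^T\btze_\ell$. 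The crucial observation is that the argument of the $\mu$-term is precisely $\bct[\hat{\mW}_\ell] + (\Gt[\hat{\mW}_\ell][\mathcal{V}]\bos)[\hat{\mW}_\ell]$ restricted appropriately — which is the feasibility residual of the full problem at the boundary constraint rows $\hat{\mW}_\ell$. But from \eqref{eq31}, $\bct + \Gt\bos = \bnz$ holds for \emph{all} constraint rows, in particular for the rows in $\hat{\mW}_\ell$, so this $\mu$-penalty term vanishes identically. What remains is exactly the stationarity rows of \eqref{eq31} restricted to $\mW_\ell$ (after the substitutions above), which equals zero. Hence stationarity holds, feasibility holds, and by uniqueness the candidate is the solution.

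The main obstacle I anticipate is the sparsity/index bookkeeping: carefully checking that each restricted matrix-vector product only "reaches" the nodes claimed, and in particular verifying that the set $\hat{\mW}_\ell = N_{\mG}(\mW_\ell)\cup\tmW_\ell$ is exactly the set of constraint-node indices whose constraint functions couple variables inside $\mW_\ell$ with variables outside — i.e., the "coupling constraints" — while $\mW_\ell\backslash\tmW_\ell$ indexes the "internal constraints" depending only on $\mW_\ell$. Equivalently, one must confirm that no constraint node outside $\hat{\mW}_\ell\cup\mW_\ell$ contributes to the $\mW_\ell$-stationarity rows, which follows from $G^\tau_{i,j}=\bnz$ for $d_{\mG}(i,j)>1$. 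Once this sparsity pattern is pinned down, the rest is a direct term-by-term match with \eqref{eq31}, with the $\mu$-penalty term killed by full-problem feasibility at the $\hat{\mW}_\ell$ rows.
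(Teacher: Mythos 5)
Your proposal is correct and follows essentially the same route as the paper: invoke \cref{lm41} for uniqueness, then verify the KKT conditions of $\mS\mP_\ell^\mu(\bd_\ell^{\boldsymbol{*}})$ at $\mD_\ell(\bos,\bzes)$ by restricting the full KKT system \eqref{eq31} to the relevant rows, using the banded sparsity of $\hHt$ and $\Gt$ and the vanishing of the $\mu$-penalty term via full-problem feasibility at the $\hat{\mW}_\ell$ rows. The sparsity bookkeeping you flag as the main obstacle is exactly what the paper checks (namely $G[\mW_{\ell}\backslash \tmW_{\ell}][\mV\backslash\mW_{\ell}]=\bnz$, $G[\hat{\mW}_{\ell}][\mV\backslash(\mW_{\ell}\cup \bar{\mW}_\ell)] = \bnz$, $\hH[\mW_{\ell}][\mV\backslash(\mW_{\ell}\cup\bar{\mW}_\ell)] = \bnz$, and $G[\mV\backslash(\hat{\mW}_{\ell}\cup (\mW_{\ell}\backslash\tilde{\mW}_\ell))][\mW_{\ell}]=\bnz$), and your identifications are accurate.
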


\begin{proof}

By \cref{lm41}, $\mS\mP_\ell^\mu(\bd_\ell^{\boldsymbol{*}})$ has a unique global solution. Thus, it suffices to show that $\mD_{\ell}(\bos, \bzes)$ satisfies the KKT conditions of $\mS\mP_\ell^\mu(\bd_\ell^{\boldsymbol{*}})$. Noting that \eqref{eq31} is the KKT conditions of Problem \eqref{eq36}, we can analogously obtain the KKT conditions of  \eqref{eq37_sec3} as follows:
\begin{equation}\label{eq24}
\hspace{1cm}\begin{cases}
\hH[\mW_{\ell}][\mW_{\ell}]\bto_{\ell} + \hH[\mW_{\ell}][\bar{\mW}_{\ell}] \bbo_{\ell} + (G[\hat{\mW}_{\ell}][\mW_{\ell}])^T \bbze_{\ell} + (G[\mW_{\ell}\backslash \tmW_{\ell}][\mW_{\ell}])^T \btze_{\ell}\\
\hskip1.2cm +\mu (G[\hat{\mW}_{\ell}][\mW_{\ell}])^T(\bc[\hat{\mW}_{\ell}] + G[\hat{\mW}_{\ell}][\mW_{\ell}] \bto_{\ell}  + G[\hat{\mW}_{\ell}][\bar{\mW}_{\ell}] \bbo_{\ell}) = -\nabla_{\bx} \mL[\mW_{\ell}], \\
G[\mW_{\ell}\backslash \tmW_{\ell}][\mW_{\ell}] \bto_{\ell} =- \bc[\mW_{\ell}\backslash \tmW_{\ell}],
\end{cases}
\end{equation}
where $\bbo_{\ell} = \bos[\bar{\mW}_\ell]$ and $\bbze_{\ell} = \bzes[\hat{\mW}_\ell]$. Since $\bc + G\bos = \bnz$, we have for the rows of $\mW_{\ell}\backslash \tmW_{\ell}$ that
\begin{align*}
\bnz & = \bc[\mW_{\ell}\backslash \tmW_{\ell}] + G[\mW_{\ell}\backslash \tmW_{\ell}][\mV]\bos \\
& = \bc[\mW_{\ell}\backslash \tmW_{\ell}] + G[\mW_{\ell}\backslash \tmW_{\ell}][\mW_{\ell}]\bos[\mW_{\ell}] + G[\mW_{\ell}\backslash \tmW_{\ell}][\mV\backslash\mW_{\ell}]\bos[\mV\backslash\mW_{\ell}]\\
& = \bc[\mW_{\ell}\backslash \tmW_{\ell}] + G[\mW_{\ell}\backslash \tmW_{\ell}][\mW_{\ell}]\bos[\mW_{\ell}],
\end{align*}
where the last equality is due to the facts that $N_\mG[\mW_{\ell}\backslash \tmW_{\ell}]\subseteq \mW_\ell$ and for any nodes $i,j\in\mV$, $G_{i,j} = \bnz$ if $j\notin N_{\mG}[i]$ (i.e., $d_\mG(i,j)>1$), so that $G[\mW_{\ell}\backslash \tmW_{\ell}][\mV\backslash\mW_{\ell}]=\bnz$. This verifies the second condition in \eqref{eq24}. Following the same reasoning, we consider the rows of $\hat{\mW}_\ell$ and have\;\;\;\;
\begin{align}\label{nequ:5}
\bnz & = \bc[\hat{\mW}_{\ell}] + G[\hat{\mW}_{\ell}][\mV]\bos = \bc[\hat{\mW}_{\ell}] + G[\hat{\mW}_{\ell}][\mW_\ell]\bos[\mW_\ell] + G[\hat{\mW}_{\ell}][\bar{\mW}_\ell]\bos[\bar{\mW}_\ell]  \nonumber\\
&\quad + G[\hat{\mW}_{\ell}][\mV\backslash(\mW_{\ell}\cup \bar{\mW}_\ell)]\bos[\mV\backslash(\mW_{\ell}\cup \bar{\mW}_\ell)]\nonumber\\
& = \bc[\hat{\mW}_{\ell}] + G[\hat{\mW}_{\ell}][\mW_\ell]\bos[\mW_\ell] + G[\hat{\mW}_{\ell}][\bar{\mW}_\ell]\bos[\bar{\mW}_\ell],
\end{align}
where the last equality is due to $G[\hat{\mW}_{\ell}][\mV\backslash(\mW_{\ell}\cup \bar{\mW}_\ell)] = \bnz$. Furthermore, we consider the rows of $\mW_\ell$ of the equality $\hH\bos + G^T\bzes = -\nabla_{\bx}\mL$, and have
\begin{align}\label{nequ:6}
-\nabla_{\bx}\mL[\mW_{\ell}] & = \hH[\mW_\ell][\mV]\bos + (G[\mV][\mW_\ell])^T\bzes \nonumber\\
& = \hH[\mW_\ell][\mW_\ell]\bos[\mW_{\ell}] + \hH[\mW_\ell][\bar{\mW}_\ell]\bos[\bar{\mW}_\ell] + (G[\hat{\mW}_\ell][\mW_{\ell}])^T\bzes[\hat{\mW}_\ell] \nonumber\\
& \quad + (G[\mW_{\ell}\backslash\tilde{\mW}_\ell][\mW_{\ell}])^T\bzes[\mW_{\ell}\backslash\tilde{\mW}_\ell].
\end{align}
The second equality here is due to $\hH[\mW_{\ell}][\mV\backslash(\mW_{\ell}\cup\bar{\mW}_\ell)] = \bnz$ and $G[\mV\backslash(\hat{\mW}_{\ell}\cup (\mW_{\ell}\backslash\tilde{\mW}_\ell))][\mW_{\ell}]=\bnz$. Combining \eqref{nequ:5} and \eqref{nequ:6}, we see that the first condition in \eqref{eq24} holds. Thus, we know $\mD_{\ell}(\bos, \bzes) = (\bos[\mW_{\ell}], \bzes[\mW_{\ell}\backslash\tilde{\mW}_{\ell}])$ satisfies \eqref{eq24} and we complete the proof.
\end{proof}

We now provide an upper bound of the solution difference with different boundary variables, $\|(\towt_{\ell}(\bd_{\ell}), \tzewt_{\ell}(\bd_{\ell})) - (\towt_{\ell}(\bd_{\ell}'), \tzewt_{\ell}(\bd_{\ell}'))\|$. Recall (see above \cref{df22}) that for any node $k\in \mW_{\ell}$, $\towt_{\ell,k}$ denotes the variable associated with this node (similar for $\tzewt_{\ell,k}$).

\begin{lemma}\label{lem41}
Suppose Assumptions \ref{as41}--\ref{as45} hold for the full Problem \eqref{eq36} and $\mu\geq \hat{\mu}$. Then, for any $\ell\in[M]$ and any nodes $k\in \mW_{\ell}$ and $j\in\mW_{\ell}\backslash\tilde{\mW}_\ell$, we have
\begin{subequations}\label{eq42}
\begin{align}
\|\towt_{\ell,k}(\bd_{\ell}) - \towt_{\ell,k}(\bd_{\ell}')\| & \leq \Upsilon_\mu\Upsilon_{K}\cdot \{\texttt{poly}_{\mG\circ\mW}(b)\}^{1/2}\cdot\rho_\mu^{d_\mG(k, N_{\mG}(\mW_{\ell}) )- 2 }\|\bd_{\ell} - \bd_{\ell}'\|, \label{eq42a}\\
\|\tzewt_{\ell,j}(\bd_{\ell}) - \tzewt_{\ell,j}(\bd_{\ell}')\| & \leq \Upsilon_\mu\Upsilon_{K}\cdot \{\texttt{poly}_{\mG\circ\mW}(b)\}^{1/2}\cdot\rho_\mu^{d_\mG(j, N_{\mG}(\mW_{\ell}) )- 2 }\|\bd_{\ell} - \bd_{\ell}'\|, \label{eq42b}
\end{align}
\end{subequations}
where $\Upsilon_\mu, \Upsilon_{K}, \rho_\mu$ are defined in \cref{lm61}, and $\texttt{poly}_{\mG\circ\mW}(b)$ is defined in \eqref{nequ:8}.

\end{lemma}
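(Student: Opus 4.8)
The plan is to collapse the whole statement to one linear system governed by the KKT matrix $K_\ell^\mu$ of \eqref{eq64}, and then feed the exponential‑decay estimate of \cref{lm61} into it. First I would rewrite the optimality system \eqref{eq24} of $\mS\mP_\ell^\mu(\bd_\ell)$ in the form $K_\ell^\mu(\towt_\ell(\bd_\ell);\tzewt_\ell(\bd_\ell))=(\boldsymbol{p}_\ell(\bd_\ell);-\bct[\mW_\ell\backslash\tmW_\ell])$, where $K_\ell^\mu$ is as in \eqref{eq64} and \cref{lm41}, the dual block $-\bct[\mW_\ell\backslash\tmW_\ell]$ does not involve $\bd_\ell$, and the primal block $\boldsymbol{p}_\ell(\bd_\ell)=-\nbx\mLt[\mW_\ell]-\mu(\Gt[\hat{\mW}_\ell][\mW_\ell])^T\bct[\hat{\mW}_\ell]-\hHt[\mW_\ell][\bar{\mW}_\ell]\bbo_\ell-(\Gt[\hat{\mW}_\ell][\mW_\ell])^T\bbze_\ell-\mu(\Gt[\hat{\mW}_\ell][\mW_\ell])^T\Gt[\hat{\mW}_\ell][\bar{\mW}_\ell]\bbo_\ell$ depends affinely on $\bd_\ell=(\bbo_\ell,\bbze_\ell)$. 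Since $K_\ell^\mu$ is invertible by \cref{lm41}, subtracting this system at $\bd_\ell$ and at $\bd_\ell'$ leaves
\begin{equation*}
\begin{bmatrix}\towt_\ell(\bd_\ell)-\towt_\ell(\bd_\ell')\\ \tzewt_\ell(\bd_\ell)-\tzewt_\ell(\bd_\ell')\end{bmatrix}=(K_\ell^\mu)^{-1}\begin{bmatrix}\boldsymbol{r}_\ell\\ \bnz\end{bmatrix},
\end{equation*}
with $\boldsymbol{r}_\ell=\boldsymbol{p}_\ell(\bd_\ell)-\boldsymbol{p}_\ell(\bd_\ell')=-\hHt[\mW_\ell][\bar{\mW}_\ell](\bbo_\ell-\bbo_\ell')-(\Gt[\hat{\mW}_\ell][\mW_\ell])^T(\bbze_\ell-\bbze_\ell')-\mu(\Gt[\hat{\mW}_\ell][\mW_\ell])^T\Gt[\hat{\mW}_\ell][\bar{\mW}_\ell](\bbo_\ell-\bbo_\ell')$.

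Next I would localize $\boldsymbol{r}_\ell$. Using the band widths $\hHt_{i,j}=\bnz$ for $d_\mG(i,j)>2$ and $\Gt_{i,j}=\bnz$ for $d_\mG(i,j)>1$, the fact that any shortest path from $\mW_\ell$ to its complement meets $N_\mG(\mW_\ell)$, that $\bar{\mW}_\ell$ consists of the nodes at distance $1$ or $2$ from $\mW_\ell$, and that $\hat{\mW}_\ell=N_\mG(\mW_\ell)\cup\tmW_\ell$ lies within one hop of $N_\mG(\mW_\ell)$ (since $d_\mG(j,N_\mG(\mW_\ell))=1$ for $j\in\tmW_\ell$), a short distance count shows $[\boldsymbol{r}_\ell]_i=\bnz$ whenever $d_\mG(i,N_\mG(\mW_\ell))>2$. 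For its norm, \eqref{as44-e} gives $\|\hHt\|,\|\Gt\|\le\hU$, so (bounding submatrix norms by full‑matrix norms) $\|\boldsymbol{r}_\ell\|\le(\hU+\mu\hU^2)\|\bbo_\ell-\bbo_\ell'\|+\hU\|\bbze_\ell-\bbze_\ell'\|\le\Upsilon_{K}\|\bd_\ell-\bd_\ell'\|$, the last inequality using $\mu\ge\hat{\mu}\ge4$, $\hU\ge1$, and $\Upsilon_{K}=1.5\mu\hU^2$.

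Finally I would combine the two pieces via the block partition of $(K_\ell^\mu)^{-1}$ in \eqref{eq64}. For $k\in\mW_\ell$ the primal block gives $\towt_{\ell,k}(\bd_\ell)-\towt_{\ell,k}(\bd_\ell')=\sum_{i}[(K_\ell^\mu)^{-1}_1]_{k,i}[\boldsymbol{r}_\ell]_i$, and the sum may be restricted to $\{i\in\mW_\ell:d_\mG(i,N_\mG(\mW_\ell))\le2\}$. Bounding $\|[(K_\ell^\mu)^{-1}_1]_{k,i}\|\le\Upsilon_\mu\rho_\mu^{d_\mG(k,i)}$ by \cref{lm61}, using $d_\mG(k,i)\ge d_\mG(k,N_\mG(\mW_\ell))-d_\mG(i,N_\mG(\mW_\ell))\ge d_\mG(k,N_\mG(\mW_\ell))-2$ together with $\rho_\mu\in(0,1)$, and then applying Cauchy--Schwarz over the at most $|\mW_\ell|\le\texttt{poly}_{\mG\circ\mW}(b)$ active indices followed by the bound on $\|\boldsymbol{r}_\ell\|$, yields \eqref{eq42a}. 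The identical argument with $(K_\ell^\mu)^{-1}_2$ in place of $(K_\ell^\mu)^{-1}_1$ and $j\in\mW_\ell\backslash\tmW_\ell$ yields \eqref{eq42b}.

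The step I expect to be the main obstacle is the localization of $\boldsymbol{r}_\ell$: verifying, block‑row by block‑row and using only the band widths and the combinatorial definitions of $\bar{\mW}_\ell$, $\hat{\mW}_\ell$, and $\tmW_\ell$ relative to $N_\mG(\mW_\ell)$, that the residual is supported within two hops of $N_\mG(\mW_\ell)$ --- this is exactly what limits the loss in the exponents of \eqref{eq42} to the harmless additive ``$-2$''. The constant bookkeeping behind $\|\boldsymbol{r}_\ell\|\le\Upsilon_{K}\|\bd_\ell-\bd_\ell'\|$ is routine once $\mu\ge\hat{\mu}$ is invoked.
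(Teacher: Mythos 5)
Your proposal is correct and follows essentially the same route as the paper: subtract the two KKT systems to isolate $-F_\ell(\bd_\ell-\bd_\ell')$ on the right-hand side, observe that this residual is supported only on the internal boundary nodes within depth two (the paper's set $\cmW_\ell$), and then combine the exponential decay of $(K_\ell^\mu)^{-1}$ from \cref{lm61} with Cauchy--Schwarz and the cardinality bound \eqref{nequ:8}. The only cosmetic differences are that the paper bounds the active index set by $|\cmW_\ell|$ rather than $|\mW_\ell|$ and phrases the distance estimate as $d_\mG(k,N_\mG(\mW_\ell))\le d_\mG(k,j)+2$, both of which are equivalent to your steps.
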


\begin{proof}

The main idea of the proof is utilizing the exponential decay structure of the KKT matrix inverse in \cref{lm61}. First, we note that the KKT conditions of \eqref{eq37_sec3} are given by
\begin{equation}\label{nequ:11}
\begin{bmatrix}
\hHm_{\ell} & G_{\ell}^T\\
G_{\ell} & \bnz\\
\end{bmatrix} \begin{bmatrix}
\towt_{\ell}(\bd_{\ell})\\
\tzewt_{\ell}(\bd_{\ell})
\end{bmatrix} = \begin{bmatrix}
-F_{\ell}\bd_{\ell}-\mu (G[\hat{\mW}_{\ell}][\mW_{\ell}])^T\bc[\hat{\mW}_{\ell}]-\nabla_{\bx} \mL[\mW_{\ell}]\\
-\bc[\mW_{\ell}\backslash \tmW_{\ell}]
\end{bmatrix},
\end{equation}
where $F_{\ell}$ is given as follows
\begin{equation*}
F_{\ell} = \begin{pmatrix}
\hH[\mW_{\ell}][\bar{\mW}_{\ell}] + \mu (G[\hat{\mW}_{\ell}][\mW_{\ell}])^T G[\hat{\mW}_{\ell}][\bar{\mW}_{\ell}] & (G[\hat{\mW}_{\ell}][\mW_{\ell}])^T
\end{pmatrix}.
\end{equation*}
Therefore, we obtain
\begin{equation*}
\begin{bmatrix}
\towt_{\ell}(\bd_{\ell}) - \towt_{\ell}(\bd_{\ell}')\\
\tzewt_{\ell}(\bd_{\ell}) - \tzewt_{\ell}(\bd_{\ell}')
\end{bmatrix} = \begin{bmatrix}
(K_\ell^\mu)^{-1}_1 & \{(K_\ell^\mu)^{-1}_2\}^T\\
(K_\ell^\mu)^{-1}_2 & (K_\ell^\mu)^{-1}_3
\end{bmatrix} \begin{bmatrix}
-F_{\ell} (\bd_{\ell}-\bd_{\ell}') \\
\bnz
\end{bmatrix}.
\end{equation*}
With the above equality, we define $\cmW_{\ell}\coloneqq (N_{\mG}(\tmW_{\ell})\cap \mW_{\ell})\cup \tmW_{\ell}$ to be the set of internal boundary nodes of $\mW_{\ell}$ within depth two. Then, for any $k\in \mW_{\ell}$, we apply \cref{lm61} and have
\begin{align}\label{nequ:7}
\| \towt_{\ell,k}(\bd_{\ell}) & - \towt_{\ell,k}(\bd_{\ell}') \|  = \big\| \sum_{j\in \mW_\ell} [(K_\ell^\mu)_1^{-1}]_{k,j}[F_\ell(\bd_\ell - \bd_\ell')]_j\big\| \nonumber\\
& = \big\| \sum_{j\in \cmW_\ell} [(K_\ell^\mu)_1^{-1}]_{k,j}[F_\ell(\bd_\ell - \bd_\ell')]_j\big\| \leq \sum_{j\in \cmW_\ell} \Upsilon_\mu\rho_\mu^{d_\mG(k,j)}\cdot\|[F_\ell(\bd_\ell - \bd_\ell')]_j\| \nonumber\\
& \leq \Upsilon_\mu\sqrt{|\cmW_{\ell}|}\bigg\{\sum_{j\in \cmW_\ell}\rho_\mu^{2d_\mG(k,j)}\cdot \|[F_\ell(\bd_\ell - \bd_\ell')]_j\|^2\bigg\}^{1/2} \nonumber\\
& \leq \Upsilon_\mu\sqrt{|\cmW_{\ell}|}\bigg\{\rho_\mu^{2(d_\mG(k,N_\mG(\mW_{\ell}))-2)} \sum_{j\in \cmW_\ell} \|[F_\ell(\bd_\ell - \bd_\ell')]_j\|^2\bigg\}^{1/2} \nonumber\\
& =\Upsilon_\mu\sqrt{|\cmW_{\ell}|} \rho_\mu^{d_\mG(k,N_\mG(\mW_{\ell})) -2}\cdot\|F_\ell(\bd_\ell - \bd_\ell')\|  \nonumber\\
& \leq \Upsilon_\mu\sqrt{|\cmW_{\ell}|} \rho_\mu^{d_\mG(k,N_\mG(\mW_{\ell})) -2 } \cdot(2\hU+\mu\hU^2)\|(\bd_\ell - \bd_\ell')\|,
\end{align}
where the second equality is due to the facts that $\hH[j][\bar{\mW}_\ell] = \bnz$ and $G[\hat{\mW}_\ell][j] = \bnz$ for any $j\in \mW_{\ell}\backslash\cmW_{\ell}$, so that $[F_\ell(\bd_\ell - \bd_\ell')]_j = \bnz$ for $j\in \mW_{\ell}\backslash\cmW_{\ell}$; the fourth inequality is by Cauchy-Schwarz inequality; the fifth inequality is due to the fact that $d_\mG(k,N_\mG(\mW_{\ell})) \leq d_\mG(k, j) + 2$ for any $j\in \cmW_{\ell}$; the last inequality is due to \eqref{as44-e}. Noting that $\mu\geq 4$ and $|\cmW_{\ell}|\leq \texttt{poly}_{\mG\circ\mW}(b)$ by \eqref{nequ:8}, we complete the proof of \eqref{eq42a}. For \eqref{eq42b}, we follow the derivation of \eqref{nequ:7} by replacing $(K_\ell^\mu)_1^{-1}$ with $(K_\ell^\mu)_2^{-1}$, and obtain the same result. This completes the proof.$\quad\;\;$
\end{proof}

We now establish the approximation error of the Newton direction computed by overlapping graph decomposition. We note that the approximate direction $(\tD \bx, \tD \bl)$ is composed by $(\towt_{\ell}(\bnz), \tzewt_{\ell}(\bnz))$, while the exact direction $(\Delta \bx, \Delta \bl)$ is composed by $(\towt_{\ell}(\bds_\ell), \tzewt_{\ell}(\bds_\ell))$ (refer to \cref{lm41i}). We remind the reader that all our results hold for any iteration $\tau\geq 0$.

\begin{theorem}[\textbf{Approximation error of OGD}]\label{thm42} 
Suppose Assumptions \ref{as41}--\ref{as45} hold for the full Problem \eqref{eq36} and $\mu\geq \hat{\mu}$. Then, the approximate direction $(\tD \bx, \tD \bl)$ computed by OGD with the overlap size $b$ satisfies
\begin{equation}\label{eq44}
\|(\tD\bx - \Delta \bx,\; \tD\bl - \Delta \bl)\|\le \Upsilon_{\mu,b}\cdot \rho_\mu^{b}\|(\Delta \bx, \;\Delta \bl)\|,
\end{equation}
where $\Upsilon_\mu, \Upsilon_{K}, \rho_\mu$ are defined in \cref{lm61} and 
\begin{equation*}
\Upsilon_{\mu,b}\coloneqq \frac{\sqrt{2}\Upsilon_\mu\Upsilon_{K}}{\rho_\mu\sqrt{1-\rho_\mu^2}}\cdot\texttt{poly}_{\mG\circ\mW}(b)\{\texttt{poly}_{\mG}(b+2)\}^{1/2}.
\end{equation*}
\end{theorem}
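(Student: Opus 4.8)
The plan is to split the squared error over the disjoint cells $\{\mV_\ell\}_{\ell\in[M]}$, to identify each local block of $(\tD\bx,\tD\bl)$ and of $(\Delta\bx,\Delta\bl)$ as solutions of the \emph{same} parametrized subproblem \eqref{eq37_sec3} differing only in their boundary data, and then to invoke the sensitivity estimate of \cref{lem41} together with the polynomial-growth \cref{as45}. To set this up, note that $\{\mV_\ell\}$ partitions $\mV$ and $\mV_\ell\subseteq\mW_\ell\setminus\tmW_\ell$: for $k\in\mV_\ell$ we have $N_\mG[k]\subseteq N^b_\mG[\mV_\ell]\subseteq\mW_\ell$, so $k$ has no neighbor outside $\mW_\ell$ and hence $k\notin\tmW_\ell$. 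Thus $\mD_\ell$ carries both a primal and a dual component at each node of $\mV_\ell$, and
\[
\|(\tD\bx-\Delta\bx,\;\tD\bl-\Delta\bl)\|^2=\sum_{\ell=1}^{M}\sum_{k\in\mV_\ell}\big(\|\tD\bx_k-\Delta\bx_k\|^2+\|\tD\bl_k-\Delta\bl_k\|^2\big).
\]
By \cref{df22} with $\mS\mP_{\ell}^{\mu}=\mS\mP_{\ell}^{\mu}(\bnz)$ we get $(\tD\bx_k,\tD\bl_k)=(\towt_{\ell,k}(\bnz),\tzewt_{\ell,k}(\bnz))$ for $k\in\mV_\ell$, and by \cref{lm41i} we get $(\Delta\bx_k,\Delta\bl_k)=(\towt_{\ell,k}(\bd_\ell^{\boldsymbol{*}}),\tzewt_{\ell,k}(\bd_\ell^{\boldsymbol{*}}))$ with $\bd_\ell^{\boldsymbol{*}}=(\Delta\bx[\bar\mW_\ell],\Delta\bl[\hat\mW_\ell])$, so every summand is a difference of two solutions of $\mS\mP_\ell^\mu(\cdot)$ with boundary data $\bnz$ and $\bd_\ell^{\boldsymbol{*}}$.

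Next I would apply \cref{lem41} with $\bd_\ell=\bnz$ and $\bd_\ell'=\bd_\ell^{\boldsymbol{*}}$, which bounds each of $\|\tD\bx_k-\Delta\bx_k\|$ and $\|\tD\bl_k-\Delta\bl_k\|$ by $\Upsilon_\mu\Upsilon_{K}\{\texttt{poly}_{\mG\circ\mW}(b)\}^{1/2}\rho_\mu^{\,d_\mG(k,N_\mG(\mW_\ell))-2}\|\bd_\ell^{\boldsymbol{*}}\|$ for $k\in\mV_\ell$. The decisive geometric fact is $d_\mG(k,N_\mG(\mW_\ell))\ge b+1$ for every $k\in\mV_\ell$: otherwise a node of $N_\mG(\mW_\ell)$ would lie in $N^b_\mG[\mV_\ell]\subseteq\mW_\ell$, contradicting $N_\mG(\mW_\ell)\cap\mW_\ell=\emptyset$. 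Grouping the nodes of $\mV_\ell$ by the depth $d:=d_\mG(k,N_\mG(\mW_\ell))\ge b+1$, bounding the count at each depth by $|\mW_\ell|\le\texttt{poly}_{\mG\circ\mW}(b)$ from \eqref{nequ:8}, and summing the geometric series $\sum_{d\ge b+1}\rho_\mu^{2(d-2)}=\rho_\mu^{2(b-1)}/(1-\rho_\mu^2)$, the per-cell contribution would be at most
\[
\sum_{k\in\mV_\ell}\big(\|\tD\bx_k-\Delta\bx_k\|^2+\|\tD\bl_k-\Delta\bl_k\|^2\big)\le\frac{2\,\Upsilon_\mu^2\Upsilon_{K}^2\{\texttt{poly}_{\mG\circ\mW}(b)\}^2\rho_\mu^{2(b-1)}}{1-\rho_\mu^2}\,\|\bd_\ell^{\boldsymbol{*}}\|^2.
\]

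Summing over $\ell$ then reduces to controlling $\sum_\ell\|\bd_\ell^{\boldsymbol{*}}\|^2=\sum_\ell\big(\|\Delta\bx[\bar\mW_\ell]\|^2+\|\Delta\bl[\hat\mW_\ell]\|^2\big)$, which splits (a primal block indexed by $\bar\mW_\ell$ and a dual block indexed by $\hat\mW_\ell$, of distinct variable types even when the index sets intersect) into $\sum_p\|\Delta\bx_p\|^2\,|\{\ell:p\in\bar\mW_\ell\}|+\sum_p\|\Delta\bl_p\|^2\,|\{\ell:p\in\hat\mW_\ell\}|$. A node $p\in\bar\mW_\ell$ or $p\in\hat\mW_\ell$ lies within a distance of $\mV_\ell$ that is polynomial in $b$ (at most $b+2$ in the canonical case $\mW_\ell=N^b_\mG[\mV_\ell]$), so choosing one witness per such $\mV_\ell$ yields distinct nodes inside a bounded ball around $p$, whose size is $\le\texttt{poly}_\mG(b+2)$ by \eqref{eqn:graph-2}; hence $\sum_\ell\|\bd_\ell^{\boldsymbol{*}}\|^2\le\texttt{poly}_\mG(b+2)\|(\Delta\bx,\Delta\bl)\|^2$. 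Combining with the per-cell bound, taking square roots, and writing $\rho_\mu^{b-1}=\rho_\mu^{-1}\rho_\mu^{b}$ produces \eqref{eq44} with the stated $\Upsilon_{\mu,b}$.

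The hard part is exactly this bookkeeping of the double sum over subdomains and nodes: one must verify that the decay rate $\rho_\mu$ of \cref{lem41} simultaneously absorbs the number of depth-shells inside each $\mV_\ell$ (handled by the geometric series, the source of the $1/(1-\rho_\mu^2)$ factor) and the number of subdomains whose boundary layers $\bar\mW_\ell,\hat\mW_\ell$ touch a given node (handled by disjointness of $\{\mV_\ell\}$ together with \cref{as45}, the source of the $\texttt{poly}_\mG(b+2)$ factor), so that the final constant depends only on $\mu$, $b$, and the two structural polynomials, and not on $|\mV|$ or $M$. The remaining ingredients — the partition of the error, the block identification via \cref{df22} and \cref{lm41i}, and the distance bound $d_\mG(k,N_\mG(\mW_\ell))\ge b+1$ — are routine.
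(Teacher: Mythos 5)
Your proposal is correct and follows essentially the same route as the paper's proof: the same reduction via \cref{df22} and \cref{lm41i} to a boundary-data perturbation handled by \cref{lem41}, the same depth-shell geometric series with $d_\mG(k,N_\mG(\mW_\ell))\ge b+1$, and the same counting argument bounding the multiplicity of boundary nodes by $\texttt{poly}_\mG(b+2)$, yielding the identical constant $\Upsilon_{\mu,b}$. Your explicit check that $\mV_\ell\subseteq\mW_\ell\backslash\tmW_\ell$ (so the dual block is retained on $\mV_\ell$) is a detail the paper leaves implicit, but it does not alter the argument.
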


\begin{proof}

By \cref{df22}, \cref{lm41i}, and \cref{lem41}, we know that for any node $k\in \mV_\ell$,
\begin{equation*}
\|\tD \bx_k - \Delta \bx_k\|^2 = \| \towt_{\ell,k}(\bnz) - \towt_{\ell,k}(\bds_\ell) \|^2 \leq \Upsilon_1 \rho_\mu^{2d_\mG(k, N_\mG(\mW_\ell))-4}\|\bds_\ell\|^2,
\end{equation*}
where $\Upsilon_1 \coloneqq \Upsilon_\mu^2\Upsilon_{K}^2\texttt{poly}_{\mG\circ\mW}(b)$, and the above bound holds for $\|\tD \bl_k - \Delta \bl_k\|^2$ as well. With the above bound, we have
\begin{align*}
\| \tD \bx - \Delta \bx\|^2 & = \sum_{\ell=1}^{M} \sum_{k\in\mV_{\ell}} \|\tD \bx_k - \Delta \bx_k\|^2\le\Upsilon_1\sum_{\ell=1}^{M} \sum_{k\in\mV_{\ell}} \rho_\mu^{2d_\mG(k, N_\mG(\mW_\ell))-4}\|\bds_\ell\|^2\\
& = \Upsilon_1\sum_{\ell=1}^{M}\|\bds_\ell\|^2\sum_{d=b+1}^{\infty}|\{k\in\mV_\ell: d_\mG(k, N_\mG(\mW_\ell))=d\}|
\cdot\rho_\mu^{2d-4}\\
& \leq \Upsilon_1\sum_{\ell=1}^{M}\|\bds_\ell\|^2 |\mV_\ell| \sum_{d=b+1}^{\infty}\rho_\mu^{2d-4} \stackrel{\eqref{nequ:8}}{\leq}\Upsilon_1\texttt{poly}_{\mG\circ\mW}(b)\frac{\rho_\mu^{2b-2}}{1-\rho_\mu^2}\sum_{\ell=1}^{M}\|\bds_\ell\|^2.
\end{align*}
We note that the boundary variables of two subproblems, $\bds_{\ell_1}$ and $\bds_{\ell_2}$ with $\ell_1\neq \ell_2$, may have overlaps so that they share the same variables of $(\Delta\bx, \Delta\bl)$. Below we give an upper bound for the number of subproblems (i.e., subgraphs) that can share one boundary node. For any $\ell\in[M]$, we know $\bds_\ell$ is associated with the boundary nodes $\bar{\mW}_{\ell}\cup\hat{\mW}_{\ell}$. Thus, for any node $k\in \bar{\mW}_{\ell}\cup\hat{\mW}_{\ell}$, if it is the boundary of the $\ell'$-th subgraph, then $d_\mG(k, \mV_{\ell'}) \leq b+2$. Since $\{\mV_\ell\}_{\ell\in[M]}$ are mutually disjoint and by Assumption \ref{as45}, $|\{j\in\mV: d(k,j)\leq b+2\}| = \texttt{poly}_\mG(b+2)$, one boundary node is shared by at most $\texttt{poly}_\mG(b+2)$ subgraphs. With this fact, we have \begin{equation*}
\sum_{\ell=1}^{M}\|\bds_{\ell}\|^2 \leq \sum_{j\in \mV} \polyG(b+2)\|(\Delta \bx_j, \Delta\bl_j)\|^2 = \polyG(b+2)\|(\Delta \bx,\Delta \bl)\|^2.
\end{equation*}
Combining the above two displays, we obtain \vskip-0.6cm
\begin{equation*}
\| \tD \bx - \Delta \bx\|^2 \leq \Upsilon_1\texttt{poly}_{\mG\circ\mW}(b)\cdot\texttt{poly}_{\mG}(b+2)\frac{\rho_\mu^{2b-2}}{1-\rho_\mu^2}\|(\Delta \bx,\Delta \bl)\|^2.
\end{equation*}
The result of $\| \tD \bl - \Delta \bl\|^2 $ follows the same derivations above. This completes the proof.\;\;
\end{proof}

\cref{thm42} indicates that the approximate Newton direction $(\tD\bx, \tD\bl)$ approaches the exact Newton direction $(\Delta\bx, \Delta\bl)$ exponentially fast with the increasing overlap size $b$. We note that the scaling factor $\Upsilon_{\mu,b}$ grows only polynomially in $b$; thus, the approximation error is eventually dominated by the exponential rate in $b$ and can be arbitrarily small as long as $b$ is large enough. It is worth mentioning that the constant $\Upsilon_{\mu,b}$ is conservative; in our experiments (cf. \cref{sec6}), we observe that the method may still work even for small $b$ (e.g., $1$ or $2$), although larger $b$ can improve the convergence rate.

\section{Global Convergence of FOGD}\label{sec4}

In this section, we prove the global convergence of FOGD. A key step is to show that the approximate Newton direction is a descent direction of the exact augmented Lagrangian merit function \eqref{eq33}, so that FOGD can keep decreasing the merit function over iterates. To achieve this, we rely on the approximation error established in \cref{thm42}. For simplicity of the presentation, let us define $\delta_{\mu,b} \coloneqq\Upsilon_{\mu,b}\rho_\mu^b$, and we always suppose a fixed $\mu\geq \hat{\mu}$ (cf. \cref{lm41}) without repeating it.

We need the following assumption in the global convergence analysis, which is commonly used in the SQP literature \cite[Proposition 4.15]{Bertsekas1996Constrained}. It enhances Assumption \ref{as44} by imposing it on a compact set.

\begin{assumption}[\textbf{Compactness}]\label{as51}
There exists a compact set $\mX\times\Lambda$ such that $(\bxt+\alpha \tD \bxt, \blt+\alpha \tD \blt)\in\mX\times\Lambda$, $\forall \tau \ge 0$, $\alpha\in[0,1]$; and we assume that $\{f_i, \bc_i\}_{i\in\mV}$ are thrice continuously differentiable in $\mX\times \Lambda$, and for a constant $\Upsilon\geq 1$,
\begin{equation}\label{eq52}
 \underset{\bx\in \mX,\, \bl\in\Lambda}{\sup}\|H_{i,j}(\bx, \bl)\|\leq \Upsilon \quad \text{ and }\quad  \underset{\bx\in \mX}{\sup}\|G_{i,j}(\bx)\| \le \Upsilon,\quad \forall\,i,j\in\mV.
\end{equation}
\end{assumption}

Here, we abuse the notation $\Upsilon$ in Assumption \ref{as44} to denote the upper bound of the Hessian and Jacobian matrices in the set $\mX\times \Lambda$. Again, $H_{i,j}$ and $G_{i,j}$ actually have banded structures with distance measured on the graph $\mG$; thus, \eqref{eq52} is only required for $d_\mG(i, j)\leq 2$. $\hskip1.5cm$

The next lemma is a direct result of Assumptions \ref{as41}--\ref{as45} and \ref{as51}.

\begin{lemma}\label{lm51}
Under Assumptions \ref{as45} and \ref{as51}, we have for the constant $\hU$ in \eqref{as44-e} that \vskip-0.4cm
\begin{equation}\label{eq53}
\underset{\bx\in \mX,\, \bl\in\Lambda}{\sup} \|H(\bx,\bl)\|\leq \hU \quad \text{ and }\quad  \underset{\bx\in\mX}{\sup}\|G(\bx)\| \le \hU.
\end{equation} \vskip-0.2cm
\noindent Furthermore, if Assumptions \ref{as41}--\ref{as45} hold, then the KKT inverse satisfies \vskip-0.4cm
\begin{equation}\label{eq54}
\Bigg\|\begin{bmatrix}
\hHt & (\Gt)^T\\
\Gt & \bnz 
\end{bmatrix}^{-1}\Bigg\|\le \frac{4\hU^2}{\gamma_{H}\gamma_G},\quad\forall \tau\ge 0.
\end{equation}
\end{lemma}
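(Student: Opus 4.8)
The statement bundles two essentially unrelated facts, and I would prove them separately: first the uniform operator‑norm bounds \eqref{eq53}, then the inverse bound \eqref{eq54}.

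\textbf{The bounds \eqref{eq53}.} These are simply the compact‑set analogue of \eqref{as44-e}, so the plan is to replay the derivation of \eqref{as44-e} with Assumption~\ref{as51} in the role of Assumption~\ref{as44}. Concretely, Assumption~\ref{as51} supplies the block‑wise bounds $\|H_{i,j}(\bx,\bl)\|\le\Upsilon$ and $\|G_{i,j}(\bx)\|\le\Upsilon$ for all $i,j\in\mV$ \emph{uniformly} over $(\bx,\bl)\in\mX\times\Lambda$, and the banded sparsity pattern ($H_{i,j}(\bx,\bl)=\bnz$ whenever $d_\mG(i,j)>2$, $G_{i,j}(\bx)=\bnz$ whenever $d_\mG(i,j)>1$) is a structural feature of \eqref{eq21} that holds at every point, not just at iterates. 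Feeding these two facts together with the polynomial‑growth bound \eqref{eqn:graph-2} into \cite[Lemma~5.10]{shin2022exponential} yields $\sup_{\bx\in\mX,\,\bl\in\Lambda}\|H(\bx,\bl)\|\le\Upsilon\,\polyG(1)^2=\hU$ and $\sup_{\bx\in\mX}\|G(\bx)\|\le\Upsilon\,\polyG(1)\le\hU$, the last inequality since $\polyG(1)\ge1$. There is no obstacle here; it is a verbatim reuse of the argument behind \eqref{as44-e}.

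\textbf{The inverse bound \eqref{eq54}.} The plan is the textbook null‑space stability estimate for saddle‑point systems. Assumptions~\ref{as41}--\ref{as42} already guarantee invertibility of the KKT matrix in \eqref{eq54} \cite[Lemma~16.1]{nocedal2006quadratic}, so it suffices to bound $\|(u,v)\|$ for the solution of the KKT system with right‑hand side $(f,g)$, uniformly over $\|(f,g)\|=1$. I would split $u=u_0+u_\perp$ with $u_0\in\ker\Gt$ and $u_\perp\in\mathrm{range}\bigl((\Gt)^T\bigr)$: the second block row forces $u_\perp=(\Gt)^T(\Gt(\Gt)^T)^{-1}g$, which Assumption~\ref{as42} bounds by $\gamma_G^{-1/2}$; testing the first block row against $u_0$ kills the $(\Gt)^Tv$ term, and Assumption~\ref{as41} together with $\|\hHt\|\le\hU$ (from \eqref{as44-e}, available since Assumptions~\ref{as44}--\ref{as45} hold) bounds $\|u_0\|$, hence $\|u\|=\bigl(\|u_0\|^2+\|u_\perp\|^2\bigr)^{1/2}$; finally, left‑multiplying the first block row by $(\Gt(\Gt)^T)^{-1}\Gt$ expresses $v$ through $f$ and $\hHt u$, and Assumption~\ref{as42} with $\|\hHt\|\le\hU$ bounds $\|v\|$. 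Assembling $\|(u,v)\|^2=\|u\|^2+\|v\|^2$ and repeatedly absorbing lower‑order terms via $\hU\ge1\ge\max(\gamma_H,\gamma_G)$ collapses the estimate to $\|(u,v)\|\le 4\hU^2/(\gamma_H\gamma_G)$, which is \eqref{eq54}.

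\textbf{Main obstacle.} The only delicate point is the final constant bookkeeping in the second half: the chain of inequalities must be organized so that the accumulated numerical factor comes out to exactly $4$ rather than something slightly larger — one tracks $\|u_0\|\le 2\hU/(\gamma_H\sqrt{\gamma_G})$, then $\|u\|\le\sqrt5\,\hU/(\gamma_H\sqrt{\gamma_G})$, then $\|v\|\le(1+\sqrt5)\hU^2/(\gamma_H\gamma_G)$, and uses $5+(1+\sqrt5)^2=11+2\sqrt5<16$. This is purely mechanical; alternatively one could invoke a generic saddle‑point conditioning estimate, but the clean explicit constant is easiest to obtain by the direct computation above.
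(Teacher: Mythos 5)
Both halves of your proposal are correct. The first half ($\|H\|,\|G\|\le\hU$ on the compact set) is exactly the paper's argument: block-wise bounds from Assumption \ref{as51}, the banded sparsity of $H$ and $G$, and \cite[Lemma~5.10]{shin2022exponential} applied pointwise over $\mX\times\Lambda$. For the inverse bound \eqref{eq54} you take a genuinely different, though closely related, route. The paper writes down the explicit block inverse via a null-space basis $Z$, namely $\mathcal{B}_1=Z(Z^T\hH Z)^{-1}Z^T$, $\mathcal{B}_2=(GG^T)^{-1}G(I-\hH Z(Z^T\hH Z)^{-1}Z^T)$, $\mathcal{B}_3=(GG^T)^{-1}G(\hH Z(Z^T\hH Z)^{-1}Z^T\hH-\hH)G^T(GG^T)^{-1}$, bounds each block separately by $1/\gamma_H$, $(1+\hU/\gamma_H)/\sqrt{\gamma_G}$, and $(\hU+\hU^2/\gamma_H)/\gamma_G$, and assembles them via $\|\mathcal{B}\|\le\max\{\|\mathcal{B}_1\|,\|\mathcal{B}_3\|\}+\|\mathcal{B}_2\|\le\frac{2\hU^2}{\gamma_G\gamma_H}+\frac{2\hU}{\sqrt{\gamma_G}\,\gamma_H}\le\frac{4\hU^2}{\gamma_H\gamma_G}$. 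You instead run the a priori stability estimate on the solution $(u,v)$ of the system with unit right-hand side, splitting $u$ into its $\ker\Gt$ and $\mathrm{range}((\Gt)^T)$ components; your constant tracking is sound (I verified $\|u_0\|\le 2\hU/(\gamma_H\sqrt{\gamma_G})$, $\|u\|\le\sqrt{5}\,\hU/(\gamma_H\sqrt{\gamma_G})$, $\|v\|\le(1+\sqrt{5})\hU^2/(\gamma_H\gamma_G)$, and $11+2\sqrt{5}<16$). The trade-off: the paper's explicit formula yields individual block bounds that can be reused elsewhere, at the cost of verifying the inverse formula; your energy argument is more elementary, needs only the invertibility already guaranteed by \cite[Lemma~16.1]{nocedal2006quadratic}, and lands on the same constant $4$.
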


\begin{proof}
See \cref{alm51}.
\end{proof}

We are now able to introduce the sufficient conditions on the merit function parameters $\eta_1, \eta_2$ and the approximation error $\delta_{\mu,b}$ so that $(\tD \bx, \tD \bl)$ is a descent direction of the exact augmented Lagrangian \eqref{eq33}. We omit the iteration index $\tau$.

\begin{lemma}\label{thm51}
Suppose Assumptions \ref{as41}--\ref{as45} and \ref{as51} hold. If \vskip-0.4cm
\begin{equation}\label{eq55}
\eta_1\ge  \frac{9}{\eta_2\gG},\quad\quad\quad\eta_2\le\frac{\gH}{7\hU^2},\quad\quad\quad \delta_{\mu,b} \le \frac{\eta_2\gG}{17\eta_1\hU^2},
\end{equation}\vskip-0.2cm
\noindent then we have \vskip-0.4cm
\begin{equation*}
\begin{bmatrix}
\nbx \mLe\\
\nbl \mLe\\
\end{bmatrix}^T
\begin{bmatrix}
\tD \bx\\
\tD \bl\\
\end{bmatrix}\le -\frac{\eta_2}{8}\Bigg\|\begin{bmatrix}
\nbx \mL\\
\nbl \mL\\
\end{bmatrix}\Bigg\|^2 - \frac{\eta_2\gamma_G}{16}\nbr{\begin{bmatrix}
\Delta\bx\\
\Delta\bl
\end{bmatrix}}^2.
\end{equation*}
\end{lemma}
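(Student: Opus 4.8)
The plan is to expand the inner product $\begin{bmatrix}\nbx\mLe\\\nbl\mLe\end{bmatrix}^T\begin{bmatrix}\tD\bx\\\tD\bl\end{bmatrix}$ by first computing it with the \emph{exact} Newton direction $(\Delta\bx,\Delta\bl)$, and then controlling the perturbation coming from the approximation error $\tD\bx-\Delta\bx$, $\tD\bl-\Delta\bl$ via \cref{thm42}. Differentiating \eqref{eq33} gives
\begin{equation*}
\nbx\mLe = \nbx\mL + \eta_1 (\nabla^2_{\bx\bl}\mL)\nbl\mL + \eta_2(\nabla^2_{\bx}\mL)\nbx\mL = \nbx\mL + \eta_1 G^T\bc + \eta_2 H\nbx\mL,
\end{equation*}
and $\nbl\mLe = \nbl\mL + \eta_2 G\nbx\mL = \bc + \eta_2 G\nbx\mL$ (using $\nabla^2_{\bl\bx}\mL = G$, $\nabla^2_{\bl}\mL = 0$, $\nabla_\bl\mL = \bc$). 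So the gradient of $\mLe$ is a small $O(\eta)$ perturbation of $\nabla\mL = (\nbx\mL; \bc)$. The first step is therefore to write
\begin{equation*}
\begin{bmatrix}\nbx\mLe\\\nbl\mLe\end{bmatrix}^T\begin{bmatrix}\Delta\bx\\\Delta\bl\end{bmatrix}
= \begin{bmatrix}\nbx\mL\\\bc\end{bmatrix}^T\begin{bmatrix}\Delta\bx\\\Delta\bl\end{bmatrix}
+ \eta_1 (G^T\bc)^T\Delta\bx + \eta_2(\nbx\mL)^TH\Delta\bx + \eta_2(G\nbx\mL)^T\Delta\bl,
\end{equation*}
and evaluate each term using the Newton equations $\hat H\Delta\bx + G^T\Delta\bl = -\nbx\mL$ and $G\Delta\bx = -\bc$.

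The key identity is that the leading term equals the ``ideal'' decrease. From the Newton system, $\begin{bmatrix}\nbx\mL\\\bc\end{bmatrix}^T\begin{bmatrix}\Delta\bx\\\Delta\bl\end{bmatrix} = -\Delta\bx^T\hat H\Delta\bx - \Delta\bx^TG^T\Delta\bl - \bc^T\Delta\bl = -\Delta\bx^T\hat H\Delta\bx - (G\Delta\bx)^T\Delta\bl - \bc^T\Delta\bl$; since $G\Delta\bx = -\bc$ the two last terms cancel, giving $-\Delta\bx^T\hat H\Delta\bx$. Because $\Delta\bx$ lies in the null space of $G$ (as $G\Delta\bx=-\bc$ is not zero—so this needs care: write $\Delta\bx = \Delta\bx_N + \Delta\bx_R$ with $\Delta\bx_R$ a minimum-norm solution of $G\bx=-\bc$ and $\Delta\bx_N\in\ker G$; alternatively use that $\mLe$ is a known exact merit function and the analogue of \cite[the FOTD analysis]{na2021fast} gives $\nabla\mL^T\Delta = -\Delta\bx^T\hat H\Delta\bx \le -\gamma_H\|\Delta\bx_N\|^2$ while relating $\|\Delta\bx_N\|,\|\bc\|,\|\nbx\mL\|$ to $\|\nabla\mL\|$ and $\|\Delta\|$ via \eqref{eq54}). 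Concretely, using \eqref{eq54}, $\|(\Delta\bx;\Delta\bl)\| \le \frac{4\hU^2}{\gamma_H\gamma_G}\|\nabla\mL\|$ and conversely $\|\nabla\mL\| \le (\hU+1)\|(\Delta\bx;\Delta\bl)\|$ roughly, so a bound like $\nabla\mL^T\Delta \le -\frac{\gamma_H}{2}\|\Delta\bx\|^2 + (\text{something})\|\bc\|^2$, combined with $\|\bc\| = \|G\Delta\bx\| \le \hU\|\Delta\bx\|$, lets one show the leading term is $\le -c_1\|\nabla\mL\|^2$ and $\le -c_2\|\Delta\|^2$ for explicit constants. Then the $\eta_1,\eta_2$ correction terms are each bounded in absolute value by a constant times $\eta_i\hU^2\|\nabla\mL\|\cdot\|\Delta\|$, hence (using the norm equivalences) by a constant times $\eta_i\hU^2$ times $\|\nabla\mL\|^2$ or $\|\Delta\|^2$; the smallness conditions $\eta_2\le\gamma_H/(7\hU^2)$ and $\eta_1\ge 9/(\eta_2\gamma_G)$ are calibrated so that after absorbing these corrections one still retains at least $-\frac{\eta_2}{4}\|\nabla\mL\|^2 - \frac{\eta_2\gamma_G}{8}\|\Delta\|^2$ (say).

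The final step handles the approximation error. Write $(\tD\bx;\tD\bl) = (\Delta\bx;\Delta\bl) + (\be_x;\be_\lambda)$ with $\|(\be_x;\be_\lambda)\| \le \delta_{\mu,b}\|(\Delta\bx;\Delta\bl)\|$ by \cref{thm42}; then
\begin{equation*}
\begin{bmatrix}\nbx\mLe\\\nbl\mLe\end{bmatrix}^T\begin{bmatrix}\tD\bx\\\tD\bl\end{bmatrix}
= \begin{bmatrix}\nbx\mLe\\\nbl\mLe\end{bmatrix}^T\begin{bmatrix}\Delta\bx\\\Delta\bl\end{bmatrix}
+ \begin{bmatrix}\nbx\mLe\\\nbl\mLe\end{bmatrix}^T\begin{bmatrix}\be_x\\\be_\lambda\end{bmatrix},
\end{equation*}
and the perturbation term is at most $\|\nabla\mLe\|\cdot\delta_{\mu,b}\|(\Delta\bx;\Delta\bl)\|$; bounding $\|\nabla\mLe\| \le (1+\eta_2\hU)\|\nabla\mL\| + \eta_1\hU\|\bc\| \lesssim \eta_1\hU^2\|\nabla\mL\|$ (since $\eta_1$ dominates) and then $\|\nabla\mL\| \lesssim \|(\Delta\bx;\Delta\bl)\|$, the perturbation is $\lesssim \eta_1\hU^2\delta_{\mu,b}\|(\Delta\bx;\Delta\bl)\|\cdot\|\nabla\mL\|$; the condition $\delta_{\mu,b}\le \eta_2\gamma_G/(17\eta_1\hU^2)$ then makes this no larger than, e.g., $\frac{\eta_2}{8}\|\nabla\mL\|^2 + \frac{\eta_2\gamma_G}{16}\|\Delta\|^2$, so that subtracting it from the bound of the previous paragraph yields exactly the claimed inequality $\le -\frac{\eta_2}{8}\|\nabla\mL\|^2 - \frac{\eta_2\gamma_G}{16}\|\Delta\|^2$. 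I expect the main obstacle to be the careful bookkeeping of constants in the leading term—specifically establishing $\nabla\mL^T(\Delta\bx;\Delta\bl) \le -c_1\|\nabla\mL\|^2 - c_2\|(\Delta\bx;\Delta\bl)\|^2$ with constants compatible with the stated thresholds on $\eta_1,\eta_2$—which requires splitting $\Delta\bx$ into its range and null-space components relative to $G$, using $\gamma_H$-positive-definiteness of $\hat H$ on $\ker G$, and the LICQ bound $\gamma_G$, all while keeping the polynomial-in-$\hU$ dependence explicit so that the three inequalities in \eqref{eq55} close the argument.
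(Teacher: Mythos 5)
Your overall architecture is the same as the paper's: split the inner product into the exact-direction term $\mI_1$ and the approximation-error term $\mI_2$, use the identity $\nabla\mLe = \bigl[\begin{smallmatrix} I+\eta_2 H & \eta_1 G^T\\ \eta_2 G & I\end{smallmatrix}\bigr]\nabla\mL$, substitute the Newton system, decompose $\Delta\bx$ into its range and null-space components relative to $G$, and use \cref{thm42} with the bound on $\delta_{\mu,b}$ to absorb the error. But there is a genuine gap in your plan for the leading term. You correctly compute $\nabla\mL^T(\Delta\bx;\Delta\bl) = -\Delta\bx^T\hH\Delta\bx$, and then propose to show this term alone satisfies $\nabla\mL^T(\Delta\bx;\Delta\bl)\le -c_1\|\nabla\mL\|^2 - c_2\|(\Delta\bx;\Delta\bl)\|^2$, treating the $\eta_1$- and $\eta_2$-contributions as perturbations to be ``bounded in absolute value'' and absorbed. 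That target bound is false: $-\Delta\bx^T\hH\Delta\bx$ contains no information about $\Delta\bl$ at all, so it cannot dominate $-c_2\|\Delta\bl\|^2$; and since $\hH$ is positive definite only on $\ker G$, the range component $\Delta\bx_1$ can make this term \emph{positive}, contributing up to $+\hU\|\Delta\bx_1\|^2\le \hU\|\bc\|^2/\gamma_G$ whenever $\bc\neq\bnz$.

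The negativity in the $\bc$- and $\Delta\bl$-directions comes precisely from the penalty terms you propose to discard: expanding the full quadratic form $\mI_1 = -(\Delta\bx;\Delta\bl)^T\bigl[\begin{smallmatrix}\hH+\eta_2 H\hH+\eta_1 G^TG & G^T+\eta_2 HG^T\\ G+\eta_2 G\hH & \eta_2 GG^T\end{smallmatrix}\bigr](\Delta\bx;\Delta\bl)$ produces the sign-definite contributions $-\eta_1\|G\Delta\bx\|^2=-\eta_1\|\bc\|^2$, which must cancel the positive $\|\bc\|^2$ contribution of the Hessian term (this is why $\eta_1$ is bounded \emph{below}, not above, in \eqref{eq55} --- it is not a small correction), and $-\eta_2\|G^T\Delta\bl\|^2\le-\eta_2\gamma_G\|\Delta\bl\|^2$, which together with the cross term $2\bc^T\Delta\bl$ and Young's inequality delivers the $\|\Delta\bl\|^2$ piece. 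Taking absolute values of these terms throws away exactly the negativity the lemma needs, and the argument cannot close; the paper's proof keeps the full matrix product and only estimates the indefinite off-diagonal cross terms. Your handling of $\mI_2$ is fine in spirit (the paper bounds it by $1.05\eta_1\hU^2\delta_{\mu,b}\|(\Delta\bx,\Delta\bl)\|^2$ and charges it entirely to the $\|(\Delta\bx,\Delta\bl)\|^2$ budget, which is why the final constant drops from $\gamma_G/8$ to $\gamma_G/16$), but it depends on first obtaining the correct bound on $\mI_1$.
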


\begin{proof}

We note that
\begin{equation}\label{eq57}
\begin{bmatrix}
\nbx \mLe\\
\nbl \mLe\\
\end{bmatrix}^T\begin{bmatrix}
\tD \bx\\
\tD \bl\\
\end{bmatrix} = \begin{bmatrix}
\nbx \mLe\\
\nbl \mLe\\
\end{bmatrix}^T\begin{bmatrix}
\Delta \bx\\
\Delta \bl\\
\end{bmatrix} + \begin{bmatrix}
\nbx \mLe\\
\nbl \mLe\\
\end{bmatrix}^T\begin{bmatrix}
\tD \bx - \Delta \bx\\
\tD \bl - \Delta \bl\\
\end{bmatrix} \coloneqq\mI_1 +\mI_2,
\end{equation}
where by direct calculation, 
\begin{equation}\label{eq35}
\begin{bmatrix}
\nbx \mLe\\
\nbl \mLe
\end{bmatrix} = 
\begin{bmatrix}
I + \eta_2 H  & \eta_1 G^T \\
\eta_2 G & I 
\end{bmatrix}\begin{bmatrix}
\nbx\mL\\
\nbl\mL
\end{bmatrix}.
\end{equation}
For the term $\mI_1$, we have
\begin{align*}
\mI_1
& \overset{\mathclap{\eqref{eq35}}}{=}\; \begin{bmatrix}
\Delta \bx\\
\Delta \bl\\
\end{bmatrix}^T\begin{bmatrix}
I+\eta_2 H & \eta_1 G^T\\
\eta_2 G & I\\
\end{bmatrix}\begin{bmatrix}
\nbx \mL\\
\nbl \mL\\
\end{bmatrix}\\
&\overset{\mathclap{\eqref{eq31}}}{=} -\begin{bmatrix}
\Delta \bx\\
\Delta \bl\\
\end{bmatrix}^T\begin{bmatrix}
I+\eta_2 H & \eta_1 G^T\\
\eta_2 G & I\\
\end{bmatrix}\begin{bmatrix}
\hH & G^T\\
G   & \bnz\\
\end{bmatrix}\begin{bmatrix}
\Delta \bx\\
\Delta \bl\\
\end{bmatrix}\\
& = - \begin{bmatrix}
\Delta \bx\\
\Delta \bl\\
\end{bmatrix}^T \begin{bmatrix}
\hH +\eta_2 H\hH+\eta_1 G^T G & G^T + \eta_2 H G^T\\
G + \eta_2 G \hH & \eta_2 G G^T\\
\end{bmatrix}\begin{bmatrix}
\Delta \bx\\
\Delta \bl\\
\end{bmatrix}\\
& \stackrel{\mathclap{\eqref{as44-e}}}{\leq} - \Delta\bx^T\hH\Delta\bx + \eta_2\hU^2\|\Delta\bx\|^2 - \eta_1\|G\Delta\bx\|^2 -2\Delta\bl^TG\Delta\bx + 2\eta_2\hU\|\Delta\bx\|\|G^T\Delta\bl\|-\eta_2\|G^T\Delta\bl\|^2\\
& \stackrel{\mathclap{\eqref{eq31}}}{\leq} - \Delta\bx^T\hH\Delta\bx + \eta_2\hU^2\|\Delta\bx\|^2 - \eta_1\|\bc\|^2 + 2\bc^T\Delta\bl + 2\eta_2\hU^2\|\Delta\bx\|^2- \frac{\eta_2}{2}\|G^T\Delta\bl\|^2\\
& \leq - \Delta\bx^T\hH\Delta\bx + 3\eta_2\hU^2\|\Delta\bx\|^2 - \eta_1\|\bc\|^2 + \frac{8}{\eta_2\gamma_G}\|\bc\|^2 + \frac{\eta_2\gamma_G}{8}\|\Delta\bl\|^2 - \frac{\eta_2\gamma_G}{4}\|\Delta\bl\|^2 - \frac{\eta_2}{4}\|G^T\Delta\bl\|^2\\
& \stackrel{\mathclap{\eqref{eq31}}}{=} - \Delta\bx^T\hH\Delta\bx + 3\eta_2\hU^2\|\Delta\bx\|^2 - \rbr{\eta_1 - \frac{8}{\eta_2\gamma_G}}\|\bc\|^2 - \frac{\eta_2\gamma_G}{8}\|\Delta\bl\|^2 - \frac{\eta_2}{4}\|\hH\Delta\bx + \nabla_{\bx}\mL\|^2\\
& \stackrel{\mathclap{\eqref{as44-e}}}{\leq} - \Delta\bx^T\hH\Delta\bx + 3\eta_2\hU^2\|\Delta\bx\|^2 - \rbr{\eta_1 - \frac{8}{\eta_2\gamma_G}}\|\bc\|^2 - \frac{\eta_2\gamma_G}{8}\|\Delta\bl\|^2 - \frac{\eta_2}{8}\|\nabla_{\bx}\mL\|^2 + \frac{\eta_2\hU^2}{4}\|\Delta\bx\|^2\\
& = - \Delta\bx^T\hH\Delta\bx + 3.25\eta_2\hU^2\|\Delta\bx\|^2 - \rbr{\eta_1 - \frac{8}{\eta_2\gamma_G}}\|\bc\|^2 - \frac{\eta_2\gamma_G}{8}\|\Delta\bl\|^2 - \frac{\eta_2}{8}\|\nabla_{\bx}\mL\|^2,
\end{align*}
where the sixth inequality is due to Assumption \ref{as42}. To simplify the above bound, we decompose the step $\Delta\bx$ as $\Delta\bx = \Delta \bx_1 + \Delta\bx_2$ with $\Delta\bx_1\in \text{span}(G^T)$ and $G\Delta\bx_2=\bnz$. Then, we~have
\begin{align*}
- \Delta\bx^T\hH\Delta\bx & + 3.25\eta_2\hU^2\|\Delta\bx\|^2 \\
& = - \Delta\bx_1^T\hH\Delta\bx_1 - 2
\Delta\bx_1^T\hH\Delta\bx_2 - \Delta\bx_2^T\hH\Delta\bx_2 + 3.25\eta_2\hU^2\|\Delta\bx\|^2\\
& \stackrel{\mathclap{\eqref{as44-e}}}{\leq} \hU\|\Delta\bx_1\|^2 + 2\hU\|\Delta\bx_1\|\|\Delta\bx_2\| - \gamma_H\|\Delta\bx_2\|^2 + 3.25\eta_2\hU^2\|\Delta\bx\|^2\\
& \leq \rbr{\hU + \frac{2\hU^2}{\gamma_{H}}}\|\Delta\bx_1\|^2 - \frac{\gamma_{H}}{2}\|\Delta\bx_2\|^2 + 3.25\eta_2\hU^2\|\Delta\bx\|^2\\
& = \rbr{\hU + \frac{2\hU^2}{\gamma_{H}}+\frac{\gamma_{H}}{2}}\|\Delta\bx_1\|^2 - \rbr{\frac{\gamma_{H}}{2} - 3.25\eta_2\hU^2}\|\Delta\bx\|^2\\
& \leq \rbr{\hU + \frac{2\hU^2}{\gamma_{H}}+\frac{\gamma_{H}}{2}}\frac{1}{\gamma_G}\|\bc\|^2 - \rbr{\frac{\gamma_{H}}{2} - 3.25\eta_2\hU^2}\|\Delta\bx\|^2\\
& \leq \frac{3.5\hU^2}{\gamma_{H}\gamma_G}\|\bc\|^2 - \rbr{\frac{\gamma_{H}}{2} - 3.25\eta_2\hU^2}\|\Delta\bx\|^2,
\end{align*}
where the fifth inequality is due to the fact that $\|\bc\|^2 = \|G\Delta\bx\|^2 = \|G\Delta\bx_1\|^2\geq \gamma_G\|\Delta\bx_1\|^2$, and the sixth inequality is due to $\hU\geq 1\geq \gamma_{H}$. Combining the above two displays, we obtain 
\begin{align*}
\mI_1 & \leq - \frac{\eta_2}{8}\nbr{\begin{bmatrix}
\nabla_{\bx}\mL\\
\nabla_{\bl}\mL
\end{bmatrix}}^2 - \frac{\eta_2\gamma_G}{8}\nbr{\begin{bmatrix}
\Delta\bx\\
\Delta\bl
\end{bmatrix}}^2 - \rbr{\eta_1 - \frac{8}{\eta_2\gamma_G} - \frac{3.5\hU^2}{\gamma_{H}\gamma_G} - \frac{\eta_2}{8}}\|\bc\|^2\\
& \quad  - \rbr{\frac{\gamma_{H}}{2} - 3.25\eta_2\hU^2 - \frac{\eta_2\gamma_G}{8}}\|\Delta\bx\|^2.
\end{align*}
Noting that
\begin{align*}
\frac{\gamma_{H}}{2} - 3.25\eta_2\hU^2 - \frac{\eta_2\gamma_G}{8} & \geq 0\Longleftarrow \frac{\gamma_{H}}{2}-3.375\eta_2\hU^2\geq 0 \Longleftarrow \eta_2\leq \frac{\gamma_{H}}{7\hU^2},\\
\eta_1 - \frac{8}{\eta_2\gamma_G} - \frac{3.5\hU^2}{\gamma_{H}\gamma_G} - \frac{\eta_2}{8} &\geq 0 \Longleftarrow \eta_1 - \frac{8.5}{\eta_2\gamma_G} - \frac{\eta_2}{8}\geq 0\Longleftarrow \eta_1\geq \frac{9}{\eta_2\gamma_G},
\end{align*}
we know under the condition \eqref{eq55},
\begin{equation}\label{nequ:9}
\mI_1 \leq - \frac{\eta_2}{8}\nbr{\begin{bmatrix}
\nabla_{\bx}\mL\\
\nabla_{\bl}\mL
\end{bmatrix}}^2 - \frac{\eta_2\gamma_G}{8}\nbr{\begin{bmatrix}
\Delta\bx\\
\Delta\bl
\end{bmatrix}}^2.
\end{equation}
For the term $\mI_2$, we have
\begin{align}\label{nequ:10}
\mI_2  & \overset{\substack{\eqref{eq31}, \eqref{eq35}}}{=} -\begin{bmatrix}
\tD \bx - \Delta \bx\\
\tD \bl - \Delta \bl\\
\end{bmatrix}^T\begin{bmatrix}
\hH + \eta_2 H\hH+\eta_1 G^T G & G^T + \eta_2 H G^T\\
G + \eta_2 G\hH & \eta_2 G G^T\\
\end{bmatrix}\begin{bmatrix}
\Delta \bx\\
\Delta \bl\\
\end{bmatrix} \nonumber\\
& \overset{\substack{\eqref{eq44}, \eqref{as44-e}}}{\le}\; \delta_{\mu,b} \|(\Delta \bx, \Delta \bl)\|^2 \cbr{2\hU + 3\eta_2\hU^2 + \eta_1\hU^2} \leq 1.05\eta_1\hU^2\delta_{\mu,b} \|(\Delta \bx, \Delta \bl)\|^2,
\end{align}
where the last inequality uses the fact that $2\hU + 3\eta_2\hU^2 \leq 2\hU + 3/7 \leq 2.5\hU^2 \leq 0.05\eta_1\hU^2$ (since $\eta_1\geq 63$ by $\hU\geq 1\geq \max\{\gamma_{H}, \gamma_G\}$). Combining \eqref{nequ:10} with \eqref{nequ:9}, \eqref{eq57}, and \eqref{eq55}, we complete the proof.
\end{proof}

\cref{thm51} shows that the approximate Newton direction $(\tD \bx, \tD \bl)$ is a descent direction of the exact augmented Lagrangian merit function \eqref{eq33}, ensuring sufficient reduction in each step, provided $\eta_1$ is large enough and $\eta_2$ and $\delta_{\mu,b}$ are small enough. The choices of $\eta_1$, $\eta_2$, and $\delta_{\mu,b}$ are independent of $\tau$ and $M$, and the negative inner product between $(\nabla_{\bx}\mL_\eta, \nabla_{\bl}\mL_\eta)$ and~$(\tD \bx, \tD \bl)$ guarantees the existence of a stepsize $\alpha_{\tau}$ to satisfy the Armijo condition \eqref{eq34} (by the mean value theorem).

The result in \cref{thm51} illustrates the significance of utilizing the exact augmented Lagrangian merit function. We observe that the left-hand side of \eqref{eq57} consists of two terms: $\mI_1$ that measures the reduction of the exact Newton direction on the augmented Lagrangian, and $\mI_2$ that characterizes the approximation error of OGD. By \eqref{nequ:9}, $\mI_1$ is further upper bounded by two terms. The first term is proportional to the KKT residual $\|\nabla\mL\|^2$, which is crucial for accumulating reductions across steps and achieving global convergence. The second term is proportional to the squared magnitude of the exact Newton direction $\|(\Delta\bx, \Delta\bl)\|^2$, which is the key to tolerate the approximation error of OGD in $\mI_2$. As elucidated in \cref{lem41} and \cref{thm42}, the approximation error (even for the primal variable $\|\tD\bx-\Delta\bx\|$) depends not only on $\Delta \bx$ but also on $\Delta \bl$ in the boundary variables. Therefore, a merit function that depends on both primal and dual variables, capable of tolerating the dual approximation error, is particularly preferable for our algorithm and analysis.

With \cref{thm42} and \cref{thm51}, we are able to prove the global convergence of FOGD in the next theorem.

\begin{theorem}[\textbf{Global convergence of FOGD}]\label{thm52}
Suppose Assumptions \ref{as41}--\ref{as45}, \ref{as51} hold and the parameters $(\eta_1,\eta_2, \delta_{\mu,b})$ satisfy \eqref{eq55}. Then, \cref{algo2} satisfies $\|\nabla \mLt\|\rightarrow 0$ as $\tau\rightarrow\infty$.
\end{theorem}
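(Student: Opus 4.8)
The plan is to combine the descent property from \cref{thm51} with a standard line-search argument. First I would invoke \cref{thm51}: under \eqref{eq55}, the approximate Newton direction satisfies
\begin{equation*}
\begin{bmatrix}\nbx\mLe^\tau\\ \nbl\mLe^\tau\end{bmatrix}^T\begin{bmatrix}\tD\bxt\\ \tD\blt\end{bmatrix}\le -\frac{\eta_2}{8}\|\nabla\mLt\|^2 - \frac{\eta_2\gamma_G}{16}\|(\Delta\bxt,\Delta\blt)\|^2,
\end{equation*}
so the direction is strictly descent whenever $\nabla\mLt\neq\bnz$. The line search \eqref{eq34} is therefore well defined, and because $\{f_i,\bc_i\}$ are thrice continuously differentiable on the compact set $\mX\times\Lambda$ (Assumption \ref{as51}), the gradient $\nabla\mLe$ is Lipschitz continuous on $\mX\times\Lambda$ with some modulus $L_\eta$ independent of $\tau$. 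This gives the usual quadratic upper bound $\mLe^{\tau+1}\le\mLe^\tau + \alpha\,(\nabla\mLe^\tau)^T(\tD\bxt;\tD\blt) + \tfrac{L_\eta}{2}\alpha^2\|(\tD\bxt,\tD\blt)\|^2$, from which a standard argument yields an explicit lower bound $\alpha_\tau\ge\underline\alpha>0$ on the accepted Armijo stepsize (here one also needs $\|(\tD\bxt,\tD\blt)\|$ bounded above in terms of $\|\nabla\mLt\|$, which follows from \eqref{eq54} in \cref{lm51}, \cref{thm42}, and the equivalence $\|(\Delta\bxt,\Delta\blt)\|\asymp\|\nabla\mLt\|$).

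Next I would telescope the Armijo inequalities. Since $\mLe$ is continuous on the compact set $\mX\times\Lambda$ it is bounded below, say by $\mLe^{\inf}$, and the iterates $(\bxt,\blt)$ remain in $\mX\times\Lambda$ by Assumption \ref{as51}. Summing \eqref{eq34} over $\tau=0,\dots,T$ and using the descent bound from \cref{thm51} gives
\begin{equation*}
\mLe^0 - \mLe^{\inf} \ge \sum_{\tau=0}^{\infty}\beta\alpha_\tau\,\frac{\eta_2}{8}\|\nabla\mLt\|^2 \ge \frac{\beta\underline\alpha\,\eta_2}{8}\sum_{\tau=0}^{\infty}\|\nabla\mLt\|^2,
\end{equation*}
so the series $\sum_\tau\|\nabla\mLt\|^2$ converges, which forces $\|\nabla\mLt\|\to 0$. (If one only has $\alpha_\tau\ge\underline\alpha$ up to a subsequence argument, the standard Zoutendijk-style contradiction — assume $\limsup\|\nabla\mLt\|>0$ along a subsequence and derive a contradiction with summability — closes the gap.)

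The main obstacle, and the step requiring the most care, is making the lower bound $\alpha_\tau\ge\underline\alpha$ genuinely uniform in $\tau$ and $M$. This requires: (i) a Lipschitz constant $L_\eta$ for $\nabla\mLe$ that is uniform over the graph — this is where Assumption \ref{as45} (polynomial growth) and the banded structure matter, since one must bound operator norms of Hessians/Jacobians of $\mLe$ on $\mX\times\Lambda$ independently of $|\mV|$, analogously to \eqref{as44-e} and \cref{lm51}; and (ii) a two-sided control $c_1\|\nabla\mLt\|\le\|(\Delta\bxt,\Delta\blt)\|\le c_2\|\nabla\mLt\|$ and $\|(\tD\bxt,\tD\blt)\|\le c_3\|\nabla\mLt\|$ with graph-size-independent constants, so that the quadratic term in the descent lemma is dominated by the linear term for $\alpha$ below an absolute threshold. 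Both follow from \cref{lm51}, \cref{thm42}, and \eqref{eq55} (which in particular keeps $\delta_{\mu,b}$ small enough that $\tD$ does not deviate too far from $\Delta$), but assembling the constants cleanly is the delicate bookkeeping; once that is in place, the convergence conclusion is routine. The detailed estimates are deferred to the appendix.
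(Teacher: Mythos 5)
Your proposal is correct and follows essentially the same route as the paper: invoke \cref{thm51} for the descent property, use Lipschitz continuity of $\nabla\mLe$ on the compact set $\mX\times\Lambda$ together with the bounds $\|(\tD\bxt,\tD\blt)\|\le(1+\delta_{\mu,b})\|(\Delta\bxt,\Delta\blt)\|$ from \cref{thm42} and $\|(\Delta\bxt,\Delta\blt)\|\le\frac{4\hU^2}{\gamma_H\gamma_G}\|\nabla\mLt\|$ from \eqref{eq54} to get a uniform lower bound on the Armijo stepsize, then telescope. The only cosmetic difference is that you invoke a two-sided equivalence $\|(\Delta\bxt,\Delta\blt)\|\asymp\|\nabla\mLt\|$ where only the upper bound is actually needed (the paper converts the quadratic term back into the descent inner product via \cref{thm51}), which does not affect correctness.
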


\begin{proof}

By twice differentiability of $\{f_i, \bc_i\}_{i\in \mV}$, we know $\nabla\mL_\eta$ is Lipschitz continuous in the set $\mX\times\Lambda$. Let us denote its Lipschitz constant as $\Upsilon_\eta$. Then, we have
\begin{align*}
\mLe^{\tau+1} &\le \mLe^{\tau} + \at\begin{bmatrix}
\nbx \mLe^{\tau} \\
\nbl \mLe^{\tau} \\
\end{bmatrix}^T
\begin{bmatrix}
\tD \bxt\\
\tD \blt\\
\end{bmatrix}
+ \frac{\Upsilon_{\eta}\at^2}{2} \Bigg\|\begin{bmatrix}
\tD \bxt\\
\tD \blt\\
\end{bmatrix} \Bigg\|^2\\
&\stackrel{\mathclap{\eqref{eq44}}}{\leq} \mLe^{\tau} + \at\begin{bmatrix}
\nbx \mLe^{\tau} \\
\nbl \mLe^{\tau} \\
\end{bmatrix}^T
\begin{bmatrix}
\tD \bxt\\
\tD \blt\\
\end{bmatrix}
+ \frac{\Upsilon_{\eta}\at^2(1+\delta_{\mu,b})^2}{2} \Bigg\|\begin{bmatrix}
\Delta \bxt\\
\Delta \blt\\
\end{bmatrix} \Bigg\|^2\\
& \stackrel{\mathclap{\eqref{eq31}}}{\leq} \mLe^{\tau} + \at\begin{bmatrix}
\nbx \mLe^{\tau} \\
\nbl \mLe^{\tau} \\
\end{bmatrix}^T
\begin{bmatrix}
\tD \bxt\\
\tD \blt\\
\end{bmatrix}
+ \frac{32\hU^4\Upsilon_{\eta}\at^2}{\gamma_{H}^2\gamma_G^2}
\|\nabla \mLt \|^2 \quad \text{(by \cref{lm51} and $\delta_{\mu,b}\leq 1$)}\\
& \le \mLe^{\tau} + \at\begin{bmatrix}
\nbx \mLe^{\tau} \\
\nbl \mLe^{\tau} \\
\end{bmatrix}^T
\begin{bmatrix}
\tD \bxt\\
\tD \blt\\
\end{bmatrix}
- \frac{256\hU^4\Upsilon_{\eta}\at^2}{\gamma_{H}^2\gamma_G^2\eta_2} \begin{bmatrix}
\nbx \mLe^{\tau} \\
\nbl \mLe^{\tau} \\
\end{bmatrix}^T\begin{bmatrix}
\tD \bxt\\
\tD \blt\\
\end{bmatrix}\quad \text{(by \cref{thm51})}\\
& = \mLe^{\tau} + \at\left\{1- \frac{256\hU^4\Upsilon_{\eta}}{\gamma_{H}^2\gamma_G^2\eta_2} \at\right \}\begin{bmatrix}
\nbx \mLe^{\tau} \\
\nbl \mLe^{\tau} \\
\end{bmatrix}^T
\begin{bmatrix}
\tD \bxt\\
\tD \blt
\end{bmatrix}.
\end{align*}
Thus, the Armijo condition \eqref{eq34} is satisfied if $\at \leq (1-\beta)\gamma_{H}^2\gamma_G^2\eta_2/(256\hU^4\Upsilon_{\eta})$, which implies the existence of a lower bound of $\at$, i.e., $\at\ge\bar{\alpha}>0$, $\forall \tau\geq 0$. Applying \cref{thm51} to \eqref{eq34}, we obtain \vskip-0.45cm
\begin{equation*}
\mLe^{\tau+1} \le \mLe^{\tau} -  \frac{\eta_2\at\beta}{8}\| \nabla \mLt\|^2 \le \mLe^{\tau} -  \frac{\eta_2\bar{\alpha}\beta}{8}\| \nabla \mLt\|^2 .
\end{equation*}\vskip-0.12cm
\noindent Accumulating the above reduction from $\tau=0$ to $\infty$, we obtain $\sum_{\tau=0}^{\infty} \| \nabla \mLt\|^2 \le \frac{8}{\eta_2\bar{\alpha}\beta}(\mL^0_{\eta}-\min_{\mX\times\Lambda}\mLe(\bx,\bl))<\infty$. This completes the proof.
\end{proof}

In the next section, we will show that FOGD exhibits local linear convergence. Enjoying global convergence and preserving fast local linear rate of the Schwarz method makes FOGD an appealing algorithm compared to other decomposition methods.

\section{Local Convergence of FOGD}\label{sec5}

To segue into local analysis, we assume in this section that the iterate $(\bxt, \blt)$ converges to a strict local solution $(\bxs,\bls)$ as $\tau\rightarrow \infty$. We rely on the following assumptions regarding the vanishing modification and the local Lipschitz continuity of the Lagrangian Hessian in local analysis.

\begin{assumption}[\textbf{Vanishing modification}]\label{as61}
We assume $\hHt \rightarrow \Ht$ as $\tau\rightarrow \infty$, where $\hHt$ is a modification of the Lagrangian Hessian $\Ht=\nbx^2\mLt$.
\end{assumption}

We mention that the Hessian modification is intended to enforce Assumption \ref{as41} to have a well-defined quadratic program \eqref{eq36}. Since $(\bxs,\bls)$ is a strict local solution, by the second-order sufficient condition, Assumption \ref{as41} directly applies to the exact Hessian $\Ht$ for large $\tau$, suggesting that no modifications may be needed locally and, thus, Assumption \ref{as61} holds naturally. We also need the local Lipschitz continuity condition.

\begin{assumption}[\textbf{Local Lipschitz continuity}]\label{as62}
There exists a constant $\Upsilon_L\geq 1$ such that for any two points $(\bx, \bl)$ and $(\bx', \bl')$ sufficiently close to $(\bxs,\bls)$ and any nodes $i,j\in \mV$, \vskip-0.4cm
\begin{align*}
\|H_{i,j}(\bx, \bl) - H_{i,j}(\bx', \bl')\| & \leq \Upsilon_L\max_{k\in N_\mG^2[i]}\|(\bx_k,\bl_k) - (\bx_k',\bl_k')\|, \\
\|G_{i,j}(\bx, \bl) - G_{i,j}(\bx', \bl')\| & \leq \Upsilon_L\max_{k\in N_\mG[i]}\|(\bx_k,\bl_k) - (\bx_k',\bl_k')\|.
\end{align*}
\end{assumption}

\vskip-0.1cm
We note that $H_{i,j}$ only depends on variables $\{(\bx_k, \bl_k)\}_{k\in N_\mG^2[i]}$ and $G_{i,j}$ only depends on variables $\{(\bx_k, \bl_k)\}_{k\in N_\mG[i]}$. The next result shows that the unit stepsize is locally admissible to satisfy the Armijo condition \eqref{eq34}.

\begin{lemma}\label{thm61}
\hskip-3pt Suppose Assumptions \ref{as41}--\ref{as45}, \ref{as51}, \ref{as61}--\ref{as62} hold and the parameters $(\eta_1,\eta_2,\\ \delta_{\mu,b})$ satisfy \vskip-0.47cm
\begin{equation}\label{eq61}
\eta_1\ge \frac{9}{\eta_2\gG},\quad\quad\quad\eta_2\le\frac{\gH}{7\hU^2},\quad\quad\quad \delta_{\mu,b} \leq \frac{0.5-\beta}{1.5-\beta}\cdot \frac{\eta_2\gamma_G}{8.5\eta_1\hU^2},
\end{equation}  \vskip-0.07cm
\noindent then $\alpha_{\tau}=1$ for all sufficiently large $\tau$.
\end{lemma}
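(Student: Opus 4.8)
The plan is to show that the Armijo condition \eqref{eq34} holds at $\alpha_\tau=1$ for large $\tau$ by performing a second-order Taylor expansion of $\mLe$ around $(\bxt,\blt)$ along the approximate direction $(\tD\bxt,\tD\blt)$, and comparing the resulting quadratic decrease to the linear model that appears on the right-hand side of \eqref{eq34}. Concretely, writing $p^\tau\coloneqq(\tD\bxt;\tD\blt)$ and $g^\tau\coloneqq(\nbx\mLe^\tau;\nbl\mLe^\tau)$, Taylor's theorem gives $\mLe^{\tau+1}=\mLe^\tau + (g^\tau)^Tp^\tau + \tfrac12 (p^\tau)^T\nabla^2\mLe(\xi^\tau)p^\tau$ for some $\xi^\tau$ on the segment. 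The Armijo inequality at $\alpha_\tau=1$ is then equivalent to $\tfrac12(p^\tau)^T\nabla^2\mLe(\xi^\tau)p^\tau \le -(1-\beta)(g^\tau)^Tp^\tau$. Since $(g^\tau)^Tp^\tau<0$ by \cref{thm51}, it suffices to show the left side is dominated by $(1-\beta)$ times the magnitude of the right side.

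The key observation for controlling the quadratic term is that, \emph{at the exact Newton direction} $(\Delta\bxt,\Delta\blt)$ evaluated at the true Hessian (not the modified one), the curvature $\tfrac12(\Delta p^\tau)^T\nabla^2\mLe(\bxs,\bls)(\Delta p^\tau)$ cancels a full share of $-(g^{*})^T(\Delta p^\tau)$ in the limit — this is the standard argument behind acceptance of the unit step for exact augmented Lagrangians, where the Maratos effect is avoided because $\mLe$ is differentiable and its Hessian at the solution matches the KKT structure (cf.\ \cite{na2021fast, na2021global}). I would first establish this exact-direction version: using \cref{lm51}, Assumptions \ref{as61}–\ref{as62} for Hessian continuity near $(\bxs,\bls)$, $\hHt\to\Ht$, and the explicit form \eqref{eq35} of $\nabla\mLe$, show that $\tfrac12(\Delta p^\tau)^T\nabla^2\mLe(\xi^\tau)(\Delta p^\tau) \le -(1-2\beta')(g^\tau)^T(\Delta p^\tau) + o(\|(\Delta\bxt,\Delta\blt)\|^2)$ for a slack $\beta'$ between $\beta$ and $1/2$, where the $o(\cdot)$ term comes from $\|(\bxt,\blt)-(\bxs,\bls)\|\to0$ and $\|\hHt-\Ht\|\to0$.

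Next I would transfer this from the exact direction to the approximate direction $p^\tau$ using the error bound \eqref{eq44}: $\|p^\tau-\Delta p^\tau\|\le\delta_{\mu,b}\|\Delta p^\tau\|$. Both the quadratic term and the linear term change by amounts controlled by $\delta_{\mu,b}$ — the quadratic term by $O(\delta_{\mu,b}(1+\delta_{\mu,b})\|\Delta p^\tau\|^2\cdot\|\nabla^2\mLe\|)$ (bounded via \cref{lm51} and Assumption \ref{as51}), and the linear term exactly as in the $\mathcal I_2$ estimate \eqref{nequ:10} of \cref{thm51}, namely $|(g^\tau)^T(p^\tau-\Delta p^\tau)|\le 1.05\eta_1\hU^2\delta_{\mu,b}\|\Delta p^\tau\|^2$. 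The precise constant in \eqref{eq61}, namely $\delta_{\mu,b}\le\frac{0.5-\beta}{1.5-\beta}\cdot\frac{\eta_2\gamma_G}{8.5\eta_1\hU^2}$, is engineered so that these $\delta_{\mu,b}$-order perturbations, together with the $\frac{\eta_2\gamma_G}{16}\|\Delta p^\tau\|^2$ reserve term isolated in \cref{thm51}, still leave the Armijo inequality strictly satisfied at $\alpha_\tau=1$; I would verify this by bookkeeping the coefficients of $\|\Delta p^\tau\|^2$ and of $\|\nabla\mLt\|^2$ separately. The main obstacle is the first step: carefully showing that the unit-step curvature inequality holds for the \emph{exact} augmented Lagrangian, which requires (i) using the KKT relation \eqref{eq31} to substitute $\hHt\Delta\bxt$ and $G^\tau\Delta\bxt$, (ii) exploiting $\hHt\to\Ht$ so that the "$\hHt$ vs.\ $H^\tau$" mismatch inside $\nabla^2\mLe$ vanishes, and (iii) handling the cross terms between the feasibility-penalty and optimality-penalty blocks of $\nabla^2\mLe$ — this is where the algebra is heaviest and where the differentiability of $\mLe$ (as opposed to an $\ell_1$ merit) is essential. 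I would defer the bulk of that computation to an appendix, as the paper's pattern suggests.
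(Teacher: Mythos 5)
Your proposal is correct and follows essentially the same route as the paper's proof in \cref{athm61}: a second-order Taylor expansion of $\mLe$, the cancellation between $\tfrac12 p^T\nabla^2\mLe\, p$ and the linear decrease at the exact Newton direction (via \eqref{eq31}, \eqref{eq35}, and $\hHt\to\Ht$), transfer to the approximate direction through the $\delta_{\mu,b}$ error bound \eqref{eq44} and the $\mathcal{I}_2$-type estimate \eqref{nequ:10}, and the final bookkeeping that produces the $\tfrac{0.5-\beta}{1.5-\beta}$ factor in \eqref{eq61}. The paper carries out steps two and three in a single combined computation rather than treating the exact direction separately, but the substance is identical.
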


\begin{proof}
See \cref{athm61}.
\end{proof}

\cref{thm61} retains the same conditions on $\eta_1$ and $\eta_2$ as in \cref{thm51}. However, since $(0.5-\beta)/(1.5-\beta)\leq 1/3$, it imposes a stronger condition on $\delta_{\mu,b}$. 
\cref{thm61} suggests that the exact augmented Lagrangian merit function effectively overcomes the Maratos effect and admits the unit stepsize locally, provided that $\eta_2$ and $\delta_{\mu,b}$ are sufficiently small and $\eta_1$ is sufficiently large. Admitting the unit stepsize is crucial for achieving a fast local convergence rate.~We may need to enhance the design by employing other techniques, such as second-order correction, to overcome the Maratos effect when using other non-differentiable merit functions. Those techniques would need to be decentralized in our parallel setting, significantly \mbox{complicating}~the applicability of the method. 

We then show the local linear convergence of FOGD in the next theorem.

\begin{theorem}[\textbf{Uniform local linear convergence of FOGD}]\label{lm62}
Let us define $\Psi^{\tau}\coloneqq \max_{k\in\mV} \Psi^{\tau}_k \coloneqq  \max_{k\in\mV} \|(\bx_k^{\tau}, \bl_k^{\tau}) - (\bxs_k, \bls_k)\|$. Suppose Assumptions \ref{as41}--\ref{as45}, \ref{as51}, \ref{as61}--\ref{as62} hold and the parameters $(\eta_1,\eta_2,\delta_{\mu,b})$ satisfy \eqref{eq61}. Then, for sufficiently large $\tau$, we have
\begin{equation*}
\Psi^{\tau+1} \leq 2\{\Upsilon_\mu\texttt{poly}_{\mG\circ\mW}(b+4)\}^2\rho_\mu^b \cdot\Psi^\tau,
\end{equation*}
where $(\Upsilon_\mu, \rho_\mu)$ are defined in \cref{lm61} and $b$ is the overlap size. 
\end{theorem}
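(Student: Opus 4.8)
The goal is to show that the componentwise distance $\Psi^{\tau+1}$ is bounded by $\Psi^\tau$ times a factor that decays exponentially in $b$. The natural strategy is to combine three ingredients: (i) by \cref{thm61}, for sufficiently large $\tau$ the stepsize is unity, so $(\bx^{\tau+1}, \bl^{\tau+1}) = (\bx^\tau, \bl^\tau) + (\tD\bx^\tau, \tD\bl^\tau)$; (ii) the exact Newton step $(\Delta\bx^\tau, \Delta\bl^\tau)$ would, by the usual Newton-for-KKT argument, contract the distance to $(\bx^\star, \bl^\star)$ quadratically — but crucially here we want a \emph{componentwise} (per-node) contraction bound that respects the graph structure, which is where the exponential-decay machinery re-enters; and (iii) the decomposition error $(\tD\bx^\tau - \Delta\bx^\tau, \tD\bl^\tau - \Delta\bl^\tau)$ is controlled, again componentwise, by \cref{lem41} combined with \cref{lm41i}. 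So first I would write $(\bx_k^{\tau+1} - \bx_k^\star, \bl_k^{\tau+1} - \bl_k^\star) = \big[(\bx_k^\tau + \Delta\bx_k^\tau) - \bx_k^\star\big] + (\tD\bx_k^\tau - \Delta\bx_k^\tau)$ (similarly for the dual block) and bound the two pieces separately, taking a max over $k$ at the end.

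For the first (exact-Newton) piece, the key is a \emph{local, node-wise} version of the Newton residual identity. Writing $(\Delta\bx^\tau, \Delta\bl^\tau) = -(K^\tau)^{-1}\nabla\mL^\tau$ where $K^\tau$ is the KKT matrix in \eqref{eq31} (with $\hHt$ replaced by $\Ht$ near the solution, using \cref{as61}), and using $\nabla\mL^\star = \bnz$, a Taylor expansion of $\nabla\mL$ around $(\bx^\star,\bl^\star)$ gives that the $k$-th block of $(\bx^\tau + \Delta\bx^\tau) - \bx^\star$ is $\big[(K^\tau)^{-1} R^\tau\big]_k$, where $R^\tau$ is a second-order remainder whose $j$-th block is bounded, via the local Lipschitz continuity \cref{as62}, by $O(\Upsilon_L)$ times $\max_{i\in N^2_\mG[j]}\Psi^\tau_i\cdot\Psi^\tau_j$ (the Hessian/Jacobian differences only touch a bounded neighborhood). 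Then I would invoke the exponential-decay bound for the \emph{full-problem} KKT inverse — this is exactly \cref{lm61} specialized to $\mW_\ell = \mV$, $\mu$ irrelevant, i.e. the blocks of $(K^\tau)^{-1}$ decay like $\Upsilon_\mu \rho_\mu^{d_\mG(k,j)}$ (invoking the paper's underlying \cite{shin2022exponential} result; the constants $\Upsilon_\mu,\rho_\mu$ are stated to be the ones from \cref{lm61}). Summing $\sum_j \Upsilon_\mu\rho_\mu^{d_\mG(k,j)}$ against a remainder that is itself $O(\Psi^\tau)$-small and supported near node $j$, and using \cref{as45} (polynomial graph growth) to turn the sum $\sum_j \rho_\mu^{d_\mG(k,j)}\cdot(\text{poly in }d_\mG(k,j))$ into a finite constant, yields that this piece is $\le C\,\Psi^\tau\cdot\Psi^\tau$ for a graph-size-independent $C$ — i.e. genuinely quadratically small, hence for large $\tau$ negligible relative to the second piece, or at worst absorbed into the final constant.

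For the second (decomposition error) piece, I would use \cref{lm41i} to identify $\Delta\bx^\tau$, $\Delta\bl^\tau$ restricted to $\mW_\ell$ as $\mD_\ell$ applied to the solution of $\mS\mP_\ell^\mu(\bds_\ell)$, and $\tD\bx^\tau$, $\tD\bl^\tau$ as the composition of the $\mS\mP_\ell^\mu(\bnz)$ solutions, so that for $k\in\mV_\ell$, $\tD\bx_k^\tau - \Delta\bx_k^\tau = \towt_{\ell,k}(\bnz) - \towt_{\ell,k}(\bds_\ell)$, which \cref{lem41} bounds by $\Upsilon_\mu\Upsilon_K\{\polyGW(b)\}^{1/2}\rho_\mu^{d_\mG(k,N_\mG(\mW_\ell))-2}\|\bds_\ell\|$. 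Since $k\in\mV_\ell$ forces $d_\mG(k,N_\mG(\mW_\ell))\ge b+1$, the exponent is $\ge b-1$; and $\|\bds_\ell\|$, which collects $\Delta\bx_j,\Delta\bl_j$ over the boundary nodes $j\in\bar\mW_\ell\cup\hat\mW_\ell$, needs to be re-expressed in terms of $\Psi^\tau$. This is the delicate point: $\Delta\bx_j$ is \emph{not} itself $O(\Psi^\tau)$ componentwise unless I again route through the Newton identity $\Delta\bx^\tau = (\bx^\star - \bx^\tau) + (K^\tau)^{-1}R^\tau$, giving $\|\Delta\bx_j\| \le \Psi^\tau_j + (\text{quadratically small})\le 1.5\,\Psi^\tau$ say, for large $\tau$; then $\|\bds_\ell\|^2 \le |\bar\mW_\ell\cup\hat\mW_\ell|\cdot(1.5\Psi^\tau)^2 \le \polyGW(b+2)\cdot O((\Psi^\tau)^2)$ by \cref{as45}. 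Collecting: the decomposition piece is $\le \Upsilon_\mu\Upsilon_K\cdot\polyGW(b+2)\cdot\rho_\mu^{b-1}\cdot O(\Psi^\tau)$, and after absorbing the polynomial-in-$b$ prefactors and the quadratically-small Newton piece, and massaging constants (the paper writes it compactly as $2\{\Upsilon_\mu\polyGW(b+4)\}^2\rho_\mu^b$ — the shifts $b\to b+4$ and the squaring are exactly what the various $\polyGW(b+2)$, $\{\polyGW(b)\}^{1/2}$ bounds and the $\rho_\mu^{-1}$, $\Upsilon_K$ factors telescope into), I would arrive at the stated inequality, taking the max over $k\in\mV$ at the very end. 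The main obstacle is the bookkeeping in this last step: making sure every constant ($\Upsilon_K$, $\Upsilon_\mu$, the $\rho_\mu^{-O(1)}$ factors, and all the polynomial-in-$b$ terms) is honestly dominated by $\{\Upsilon_\mu\polyGW(b+4)\}^2$ for large enough $b$, and confirming that the quadratic Newton term is truly lower-order once $\tau$ is large (so it does not degrade the \emph{linear} rate).
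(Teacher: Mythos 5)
Your argument is essentially correct, but it takes a genuinely different route from the paper's. The paper never splits off the exact Newton direction in the local analysis: it writes the unit-step update restricted to $\mV_\ell$ directly through the subproblem KKT system \eqref{nequ:11}, Taylor-expands the subproblem residual $\bg_\ell^\mu$ around the truncated true solution $(\btxs_\ell,\btls_\ell)$ with the boundary data frozen at the current iterate, cancels the resulting integral term against $(\btx_\ell-\btxs_\ell,\btl_\ell-\btls_\ell)$ up to an $o(\Psi^\tau)$ matrix-difference term, and is left with $-(K_\ell^\mu)^{-1}(\bg_\ell^\mu)^{\boldsymbol{*}}$, where the key observation is that $(\bg_\ell^\mu)^{\boldsymbol{*}}$ vanishes off the internal boundary ring $\cmW_\ell$ and is $O(\texttt{poly}_{\mG\circ\mW}(b+4)\,\Psi^\tau)$ there, being purely a boundary-data perturbation effect; \cref{lm61} then transports this across distance $\geq b-1$ into $\mV_\ell$. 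Your route instead reuses \cref{lem41} and the \cref{thm42} mechanism for the OGD error and appeals to Newton contraction for the rest, which is more modular, but it costs you two ingredients the paper does not need here: (i) an exponential-decay bound for the \emph{full-problem} KKT inverse (not literally stated anywhere in the paper --- \cref{lm61} covers only the subproblem matrices $K_\ell^\mu$ --- though it does follow from the same citation together with \eqref{eq54}), used both for the Newton remainder and for the nodewise bound $\|\Delta\bx_j\|\lesssim \Psi^\tau$ needed to control $\|\bds_\ell\|$; and (ii) correctly identifying that nodewise bound as the crux, which you do. One imprecision: the exact-Newton piece is not ``quadratically small,'' since \cref{as61} only gives $\hHt-\Ht = o(1)$ rather than $O(\Psi^\tau)$; the piece is $o(\Psi^\tau)$, which your fallback (absorbing it into the leading constant, as the paper does with its factor $2$) still handles. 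With that correction and the constant bookkeeping you flag, your proof goes through.
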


\begin{proof}
To ease notation, we drop the iteration index $\tau$ in the proof, and by \cref{thm61} we assume $\tau$ is large enough so that $\alpha_{\tau}=1$. We use $(\bx, \bl)$ and $(\bx',\bl')$ to denote two successive iterates, and let $(\btx_{\ell}, \btl_{\ell})\coloneqq \mD_{\ell}(\bx, \bl)$, $(\btx_{\ell}', \btl_{\ell}')\coloneqq \mD_{\ell}(\bx', \bl')$, $(\btxs_\ell, \btls_\ell) \coloneqq\mD_{\ell}(\bxs, \bls)$ be truncated iterates and solution of the subproblem $\ell$ (cf. \cref{df22}). With the above notation, for any $k\in\mV$, we consider the subproblem $\ell$ such that $k\in\mV_{\ell}$. Then, we have
\begin{align}\label{nequ:12}
\I_{\mV_\ell}\begin{bmatrix}
\btx_{\ell}' - \btxs_\ell\\    
\btl_{\ell}' - \btls_\ell
\end{bmatrix} & = \I_{\mV_\ell}\cbr{\begin{bmatrix}
\btx_{\ell} - \btxs_\ell\\    
\btl_{\ell} - \btls_\ell
\end{bmatrix} + \begin{bmatrix}
\towt_{\ell}(\bnz)\\
\tzewt_{\ell}(\bnz)
\end{bmatrix}} \nonumber\\
& \stackrel{\mathclap{\eqref{nequ:11}}}{=}\;\; \I_{\mV_\ell}\bigg\{\begin{bmatrix}
\btx_{\ell} - \btxs_\ell\\    
\btl_{\ell} - \btls_\ell
\end{bmatrix} -  {\underbrace{\begin{bmatrix}
\hHm_{\ell} & G_{\ell}^T\\
G_{\ell} & \bnz\\
\end{bmatrix}}_{K_\ell^\mu}}^{-1}\underbrace{\begin{bmatrix}
\mu (G[\hat{\mW}_{\ell}][\mW_{\ell}])^T\bc[\hat{\mW}_{\ell}]+\nabla_{\bx} \mL[\mW_{\ell}]\\
\bc[\mW_{\ell}\backslash \tmW_{\ell}]
\end{bmatrix} }_{\bg_\ell^\mu}\bigg\}. 
\end{align}
Here, $\I_{\mV_\ell}$ is a block-diagonal matrix with two blocks of sizes $|\mV_{\ell}|\times |\mW_\ell|$ and $|\mV_{\ell}|\times |\mW_\ell\backslash\tilde{\mW}_\ell|$. For each block, only the entries $(k,k)$ with $k\in \mV_\ell$ are 1, while other entries are 0. (Note that the matrices here are graph-indexed). This means \eqref{nequ:12} holds only for nodes in $\mV_{\ell}$, because we discard the variables on the overlap region. Furthermore, $K_\ell^\mu$ and $\bg_\ell^\mu$ are evaluated at $(\btx_\ell, \btl_\ell, \btd_\ell)$ with boundary variables $\btd_\ell = (\bx[\bar{\mW}_\ell], \bl[\hat{\mW}_\ell])$ (the evaluations do not solely depend on the truncated iterate $(\btx_\ell, \btl_\ell) = (\bx[\mW_{\ell}], \bl[\mW_{\ell}\backslash\tilde{\mW}_\ell])$; see \eqref{eq37} and \eqref{eq37_sec3} for the definitions of $\tilde{\mW}_{\ell}$, $\hat{\mW}_\ell$, and $\bar{\mW}_\ell$). We aim to apply Taylor expansion for the vector $\bg_\ell^\mu$. Let us define two segments
\begin{equation*}
\btx_{\ell}(\phi) = \btxs_{\ell} + \phi(\btx_{\ell} - \btxs_{\ell}) \quad\quad\text{and}\quad\quad \btl_{\ell}(\phi) = \btls_{\ell} + \phi(\btl_{\ell} - \btls_{\ell}),\quad\quad  \text{ for } \phi\in[0, 1].
\end{equation*}
Then, we denote $(\bg_\ell^\mu)^{\boldsymbol{*}} = ((\bg_\ell^\mu)^{\boldsymbol{*}}_1, (\bg_\ell^\mu)^{\boldsymbol{*}}_2)$ to be the vector evaluated at $(\btxs_\ell, \btls_\ell, \btd_\ell)$ and denote $H_\ell^\mu(\phi)$ and $G_\ell(\phi)$ to be the matrices evaluated at $(\btx_\ell(\phi), \btl_\ell(\phi),\btd_\ell)$. With these notation, we further have
\begin{align}\label{nequ:13}
\begin{bmatrix}
(\bg_\ell^\mu)_1\\[2pt]
(\bg_\ell^\mu)_2
\end{bmatrix} & = \begin{bmatrix}
(\bg_\ell^\mu)^{\boldsymbol{*}}_1\\[2pt]
(\bg_\ell^\mu)^{\boldsymbol{*}}_2
\end{bmatrix} + \begin{bmatrix}
(\bg_\ell^\mu)_1 - (\bg_\ell^\mu)^{\boldsymbol{*}}_1\\[2pt]
(\bg_\ell^\mu)_2 - (\bg_\ell^\mu)^{\boldsymbol{*}}_2
\end{bmatrix} \nonumber\\
& = \begin{bmatrix}
(\bg_\ell^\mu)^{\boldsymbol{*}}_1\\[2pt]
(\bg_\ell^\mu)^{\boldsymbol{*}}_2
\end{bmatrix}  + \int_{0}^{1}\begin{bmatrix}
H_\ell^\mu(\phi) + \mu\langle \nabla^2_{\bx[\mW_\ell]} \bc[\hat{\mW}_\ell](\phi), \bc[\hat{\mW}_\ell]\rangle & G_\ell^T(\phi)\\[2pt]
G_\ell(\phi) & \bnz
\end{bmatrix}\begin{bmatrix}
\btx_{\ell} - \btxs_\ell\\    
\btl_{\ell} - \btls_\ell
\end{bmatrix} d\phi,
\end{align}
where
\begin{equation*}
\langle \nabla^2_{\bx[\mW_\ell]} \bc[\hat{\mW}_\ell](\phi), \bc[\hat{\mW}_\ell]\rangle \coloneqq \sum_{i\in \hat{\mW}_\ell} \bc_i(\bx[\mW_\ell\cup \bar{\mW}_\ell])
\cdot \nabla^2_{\bx[\mW_\ell]}\bc_i(\{\btx_\ell(\phi), \bx[\bar{\mW}_\ell]\}).
\end{equation*}
Combining \eqref{nequ:12} and \eqref{nequ:13} together, we have
\begin{align*}
& \I_{\mV_\ell} \begin{bmatrix}
\btx_{\ell}' - \btxs_\ell\\    
\btl_{\ell}' - \btls_\ell
\end{bmatrix}  = \I_{\mV_\ell}\bigg\{ - \begin{bmatrix}
\hHm_{\ell} & G_{\ell}^T\\
G_{\ell} & \bnz\\
\end{bmatrix}^{-1}\begin{bmatrix}
(\bg_\ell^\mu)^{\boldsymbol{*}}_1\\[2pt]
(\bg_\ell^\mu)^{\boldsymbol{*}}_2
\end{bmatrix} \\
& \quad + \begin{bmatrix}
\hHm_{\ell} & G_{\ell}^T\\
G_{\ell} & \bnz\\
\end{bmatrix}^{-1}\int_0^1\begin{bmatrix}
\hat{H}_\ell^\mu - H_\ell^\mu(\phi) - \mu\langle \nabla^2_{\bx[\mW_\ell]} \bc[\hat{\mW}_\ell](\phi), \bc[\hat{\mW}_\ell]\rangle & G_\ell^T-G_\ell^T(\phi)\\
G_\ell - G_\ell(\phi) & \bnz
\end{bmatrix}\begin{bmatrix}
\btx_{\ell} - \btxs_\ell\\    
\btl_{\ell} - \btls_\ell
\end{bmatrix} d\phi \bigg\}\\
& \coloneqq \I_{\mV_\ell} (\mJ_1 + \mJ_2). 
\end{align*}
We obtain the nodewise upper bounds for the terms $\mJ_1$ and $\mJ_2$ separately as follows. We first deal with $\mJ_1$. Recall from the proof of \cref{lem41} that $\cmW_{\ell}\coloneqq (N_{\mG}(\tmW_{\ell})\cap \mW_{\ell})\cup \tmW_{\ell}$ is the set of internal boundary nodes of $\mW_{\ell}$ within the depth of two. For notational clarity, we spell out the evaluation point. For $(\bg_\ell^\mu)^{\boldsymbol{*}}_1$ and any node $j\in  \mW_\ell \backslash\cmW_\ell$,
\begin{align*}
[(\bg_\ell^\mu)^{\boldsymbol{*}}_1]_j & = [(\bg_\ell^\mu)_1]_j(\btxs_\ell, \btls_\ell, \btd_\ell) \\
& = \mu \{(G[\hat{\mW}_\ell][j])^T\bc[\hat{\mW}_\ell]\}(\btxs_\ell, \btls_\ell, \btd_\ell) + \nabla_{\bx} \mL[j](\btxs_\ell, \btls_\ell, \btd_\ell) = \nabla_{\bx} \mL[j](\btxs_\ell, \btls_\ell) = \bnz,
\end{align*}
where the third equality is because $G[\hat{\mW}_\ell][j] = \bnz$ at any evaluation point (since $j\notin N_\mG[\hat{\mW}_\ell]$) and the evaluation of $\nabla_{\bx} \mL[j]$ does not depend on $\btd_\ell$. On the other hand, for any node $j\in \cmW_{\ell}$, we have
\begin{align}\label{nequ:14}
\|[(\bg_\ell^\mu)^{\boldsymbol{*}}_1]_j\| & = \|[(\bg_\ell^\mu)_1]_j(\btxs_\ell, \btls_\ell, \btd_\ell)\| = \|[(\bg_\ell^\mu)_1]_j(\btxs_\ell, \btls_\ell, \btd_\ell) - [(\bg_\ell^\mu)_1]_j(\btxs_\ell, \btls_\ell, \btds_\ell)\| \nonumber\\
& \hskip-0.7cm \leq \|\nabla_{\bx_j} \mL(\btxs_\ell, \btls_\ell, \btd_\ell) - \nabla_{\bx_j} \mL(\btxs_\ell, \btls_\ell, \btds_\ell)\| \nonumber\\
& \hskip-0.7cm \quad + \mu \|\{(G[\hat{\mW}_\ell][j])^T\bc[\hat{\mW}_\ell]\}(\btxs_\ell, \btls_\ell, \btd_\ell) - \{(G[\hat{\mW}_\ell][j])^T\bc[\hat{\mW}_\ell]\}(\btxs_\ell, \btls_\ell, \btds_\ell)\| \nonumber\\
& \hskip-0.7cm  \stackrel{\mathclap{\eqref{eq53}}}{\leq}\; \hU\|\btd_\ell - \btds_\ell\| + \mu\int_{0}^{1} \nbr{\{(G[\hat{\mW}_\ell][j])^TG[\hat{\mW}_\ell][\bar{\mW}_\ell]\}(\btxs_\ell, \bx[\bar{\mW}_\ell](\phi))}\cdot \nbr{\bx[\bar{\mW}_\ell] - \bxs[\bar{\mW}_\ell]} d\phi \nonumber\\
& \hskip-0.7cm  \quad + \mu\int_{0}^{1}\nbr{\{\langle\nabla^2_{\bx_j\bx[\bar{\mW}_\ell]}\bc[\hat{\mW}_\ell], \bc[\hat{\mW}_\ell]\rangle\}(\btxs_\ell, \bx[\bar{\mW}_\ell](\phi))}\cdot \nbr{\bx[\bar{\mW}_\ell] - \bxs[\bar{\mW}_\ell]} d\phi \nonumber\\
& \hskip-0.7cm  \leq 1.3\mu\hU^2 \|\btd_\ell - \btds_\ell\| \leq 1.3\mu\hU^2\cdot|\hat{\mW}_\ell\cup \bar{\mW}_{\ell}|\cdot\Psi \leq 1.3\mu\hU^2\cdot|\mW_{\ell}\cup \bar{\mW}_\ell|\cdot\Psi \nonumber\\
& \hskip-0.7cm  \leq 1.3\mu\hU^2\cdot \texttt{poly}_{\mG\circ\mW}(b+4)\cdot \Psi,
\end{align}
where the fifth inequality is due to \eqref{eq53} and the facts that $\mu\geq 4$, $\|\bx[\bar{\mW}_\ell] - \bxs[\bar{\mW}_\ell]\|\leq \|\btd_\ell - \btds_\ell\|$, and $\|\bc[\hat{\mW}_\ell](\btxs_\ell, \bx[\bar{\mW}_\ell](\phi))\|$ is arbitrarily small when $\bx$ is sufficiently close to $\bxs$ (i.e., $\tau$ is large enough); the sixth inequality is by the definition of $\Psi(=\Psi^\tau)$; and the last inequality is due to Assumption \ref{as45} (note that $d_\mG(j_1,j_2)\leq \texttt{poly}_\mW(b)+4\leq \texttt{poly}_\mW(b+4)$ for any nodes $j_1,j_2\in \mW_{\ell}\cup \bar{\mW}_\ell$). We analyze $(\bg_\ell^\mu)^{\boldsymbol{*}}_2$ in a similar way. For any node $j\in \mW_{\ell}\backslash\tilde{\mW}_\ell$,
\begin{equation*}
[(\bg_\ell^\mu)^{\boldsymbol{*}}_2]_j = [(\bg_\ell^\mu)_2]_j(\btxs_\ell, \btls_\ell, \btd_\ell) = \bc[\mW_{\ell}\backslash\tilde{\mW}_\ell](\btxs_\ell) = \bnz.
\end{equation*}
Next, we deal with the term $\mJ_2$ and show that the nodewise upper bound of the integral term is only $o(\Psi)$, where $o(\cdot)$ denotes the small ``o" order in the usual sense. Specifically, by Assumptions \ref{as61}, \ref{as62}, \eqref{eq53}, and the fact that $\bc[\hat{\mW}_\ell](\btx_\ell, \btd_\ell)$ is arbitrarily small when $\bx$ is close to $\bxs$, we know
\begin{equation*}
\begin{bmatrix}
\hat{H}_\ell^\mu - H_\ell^\mu(\phi) - \mu\langle \nabla^2_{\bx[\mW_\ell]} \bc[\hat{\mW}_\ell](\phi), \bc[\hat{\mW}_\ell]\rangle & G_\ell^T-G_\ell^T(\phi)\\
G_\ell - G_\ell(\phi) & \bnz
\end{bmatrix} = o(1).
\end{equation*}
Combining $\mJ_1$ and $\mJ_2$ in the above four displays together and applying \cref{lm61}, we have
\begin{align*}
& \|\btx_{\ell,k}' - \btxs_{\ell,k}\| \leq \big\|\sum_{j\in\mW_{\ell}}[(K_\ell^\mu)_1^{-1}]_{k,j} [(\bg_\ell^\mu)^{\boldsymbol{*}}_1]_j +  \sum_{j\in\mW_{\ell}\backslash\tilde{\mW}_\ell}[(K_\ell^\mu)_2^{-1}]_{k,j} [(\bg_\ell^\mu)^{\boldsymbol{*}}_2]_j \big\| + o(\Psi)\\
& = \big\|\sum_{j\in\cmW_{\ell}} [(K_\ell^\mu)_1^{-1}]_{k,j} [(\bg_\ell^\mu)^{\boldsymbol{*}}_1]_j \big\| + o(\Psi)  \stackrel{\eqref{nequ:14}}{\leq} 1.5\mu\hU^2\cdot \texttt{poly}_{\mG\circ\mW}(b+4)\cdot \Psi\cdot \Upsilon_\mu\sum_{j\in\cmW_{\ell}}  \rho_\mu^{d_\mG(k,j)}\\
& \leq  1.5\mu\hU^2\cdot \texttt{poly}_{\mG\circ\mW}(b+4)\cdot \Psi\cdot \Upsilon_\mu\rho_\mu^{b-1} |\cmW_{\ell}| \\
& \stackrel{\mathclap{\eqref{nequ:8}}}{\leq}\;\; \Upsilon_{K} \cdot\texttt{poly}_{\mG\circ\mW}(b+4)\cdot\Psi\cdot \Upsilon_\mu\rho_\mu^{b-1} \texttt{poly}_{\mG\circ\mW}(b) \leq \{\Upsilon_\mu\texttt{poly}_{\mG\circ\mW}(b+4)\}^2\rho_\mu^b\Psi,
\end{align*}
where the third inequality applies \cref{lm61}, the fourth inequality is because $d_\mG(k,j) \geq b-1$ for any $k\in\mV_{\ell}$ and $j\in \cmW_\ell$, and the fifth and sixth inequalities are because of the definition in \cref{lm61}. The above derivations also hold for $\|\btl_{\ell,k}' - \btls_{\ell,k}\|$. We complete the proof by~noting that $\|(\btx_{\ell,k}', \btl_{\ell,k}') - (\btxs_{\ell,k}, \btls_{\ell,k})\| \leq \|\btx_{\ell,k}' - \btxs_{\ell,k}\| + \|\btl_{\ell,k}' - \btls_{\ell,k}\|$.
\end{proof}

From the proof of \cref{lm62}, we observe that the error $(\bxt_k - \bxs_k, \blt_k - \bls_k)$ consists of two terms. The first term, $\mJ_1$, represents a nodewise linear convergence rate achieved by the overlapping decomposition procedure, while the second term, $\mJ_2$, is of the order $o(\Psi^\tau)$ and represents a superlinear convergence rate achieved by the SQP framework. The linear rate of the decomposition procedure dominates the convergence rate of FOGD for sufficiently large $\tau$, and improves exponentially in terms of the overlap size. We note that the uniform linear convergence also matches that of the Schwarz method \cite{Na2022Convergence}.
We finally summarize the global and local convergence results of FOGD in the next theorem.

\begin{theorem}[\textbf{Convergence of FOGD}]\label{thm64}
\hskip-2pt Consider \cref{algo2} under Assumptions \ref{as41}--\ref{as45} and \ref{as51}, and suppose the parameters $(\eta_1, \eta_2, \delta_{\mu,b})$ satisfy \eqref{eq61}. Then, $\|\nabla \mLt\|\rightarrow 0$ as $\tau\rightarrow\infty$. Moreover, if Assumptions \ref{as61}--\ref{as62} hold locally, then for sufficiently large $\tau$, $\Psi^{\tau+1} \leq 2\{\Upsilon_\mu\texttt{poly}_{\mG\circ\mW}(b+4)\}^2\rho_\mu^b \cdot\Psi^\tau$, where the nodewise error $\Psi^{\tau}$ is defined in \cref{lm62}, the constants $(\Upsilon_\mu, \rho_\mu)$ are defined in \cref{lm61}, and $b$ is the overlap size. 

\end{theorem}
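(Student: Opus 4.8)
The plan is to derive \cref{thm64} almost immediately from the two main results already in hand, \cref{thm52} (global convergence) and \cref{lm62} (uniform local linear rate); the only real bookkeeping is to reconcile the two parameter regimes, since the global result was proved under \eqref{eq55} whereas the local result uses the (stronger) condition \eqref{eq61}. So the first thing I would do is check that \eqref{eq61} implies \eqref{eq55}. The constraints on $\eta_1$ and $\eta_2$ in the two displays coincide, so only the bounds on $\delta_{\mu,b}$ must be compared. Because $\beta\in(0,0.5)$ we have $2(0.5-\beta)=1-2\beta<1.5-\beta$, i.e. $(0.5-\beta)/(1.5-\beta)<\tfrac12=8.5/17$; multiplying through by $\eta_2\gamma_G/(8.5\,\eta_1\hU^2)$ shows the $\delta_{\mu,b}$-bound in \eqref{eq61} is no larger than the one in \eqref{eq55}. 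Hence \eqref{eq61} entails all hypotheses of \cref{thm52}, which yields $\|\nabla\mLt\|\to 0$ as $\tau\to\infty$, the first assertion.

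For the second assertion I would simply invoke \cref{lm62}: under Assumptions \ref{as41}--\ref{as45}, \ref{as51}, the local Assumptions \ref{as61}--\ref{as62}, the parameter condition \eqref{eq61}, and the standing hypothesis of \Cref{sec5} that $(\bxt,\blt)\to(\bxs,\bls)$ with $(\bxs,\bls)$ a strict local solution, \cref{lm62} gives $\Psi^{\tau+1}\le 2\{\Upsilon_\mu\texttt{poly}_{\mG\circ\mW}(b+4)\}^2\rho_\mu^b\,\Psi^\tau$ for all sufficiently large $\tau$, which is exactly the claimed bound; the constants $(\Upsilon_\mu,\rho_\mu)$ are those of \cref{lm61}. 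If desired, one can append the observation that since $\rho_\mu\in(0,1)$ dominates the polynomial prefactor, the rate constant $2\{\Upsilon_\mu\texttt{poly}_{\mG\circ\mW}(b+4)\}^2\rho_\mu^b$ drops below $1$ once $b$ is large enough, so the local iteration is a genuine contraction whose rate improves exponentially with the overlap size $b$.

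There is no deep obstacle here; \cref{thm64} is an assembly of \cref{thm52} and \cref{lm62}, and the implication \eqref{eq61}$\Rightarrow$\eqref{eq55} is elementary. The one point deserving care is the logical status of the local half: \cref{lm62} is established under the \Cref{sec5} standing assumption that the iterates actually converge to a strict local solution, a property that is not implied by $\|\nabla\mLt\|\to 0$ alone. I would therefore make the second half of \cref{thm64} explicitly conditional on that convergence (or, equivalently, add the hypothesis that the iterate sequence is bounded and that some limit point satisfies the second-order sufficient conditions, so that a suitable convergent subsequence exists and the neighborhood analysis applies), rather than presenting the local rate as automatic.
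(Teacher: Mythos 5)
Your proposal is correct and follows exactly the paper's route: the paper's proof of \cref{thm64} is the one-line assembly of \cref{thm52} and \cref{lm62}, with the implication \eqref{eq61}$\Rightarrow$\eqref{eq55} left implicit (the paper notes after \cref{thm61} that $(0.5-\beta)/(1.5-\beta)\le 1/3$, so \eqref{eq61} is the stronger condition). Your explicit verification of that implication and your remark that the local half rests on the standing \Cref{sec5} hypothesis that the iterates converge to a strict local solution are both accurate refinements of the same argument, not a different approach.
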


\begin{proof}
We combine \cref{thm52} with \cref{lm62} and complete the proof.
\end{proof}

\section{Numerical Experiments}\label{sec6}

We verify our theoretical results and illustrate the performance of FOGD by conducting numerical experiments on {\red two} PDE-constrained problems: a semilinear elliptic PDE problem \cite{Antil2023ALESQP} and {\red a boundary heating problem} \cite{Curtis2012note}.

\subsection{Semilinear Elliptic PDE Problem}\label{sec6:pde}

We let $\Omega \subseteq \mR^2$ be a two-dimensional domain and consider the problem
\begin{subequations}\label{eqn31}
\begin{align}
\min_{\bz,\bu}\;\; & f(\bz, \bu ) = \int_{\bomega\in\Omega} \cbr{ (\bz(\bomega) - z_d)^2 + \alpha\cdot\bu^2(\bomega) }d\bomega ,\\ 
\text{s.t.} \;\; & g(\bz,\bu) = 
\begin{cases} -\Delta \bz + \bz^p - \bu = 0\quad \text{in $\Omega$},\\ 
\bz = 0\quad \text{on $\partial\Omega$}, 
\end{cases} 
\end{align}
\end{subequations}
where $\bz, \bu: \Omega\rightarrow \mR$ are the state and control mappings, $z_d$ is the desired state, and $\Delta$ is the Laplace operator of form $\Delta\bz = \frac{\partial^2\bz}{\partial \bomega_1^2} + \frac{\partial^2\bz}{\partial \bomega_2^2}$. We let $\Omega = (0,39)^2$, $z_d = -5$, $\alpha = 0.5$, and $p=4$. To solve Problem \eqref{eqn31}, we discretize $\Omega$ by a $40\times 40$ mesh grid with evenly spaced nodes, denoted as $\Omega_h$. We then decompose the grid into five disjoint subgraphs, given by $\{(0,7), (8,15), \ldots,\\ (32,39)\}\times (0,39)$. The overlapping graph decomposition is illustrated in \cref{fig31}.

\begin{figure}[thb!]
\centering
\begin{subfigure}[b]{0.32\textwidth}
\centering
\includegraphics[width=\textwidth]{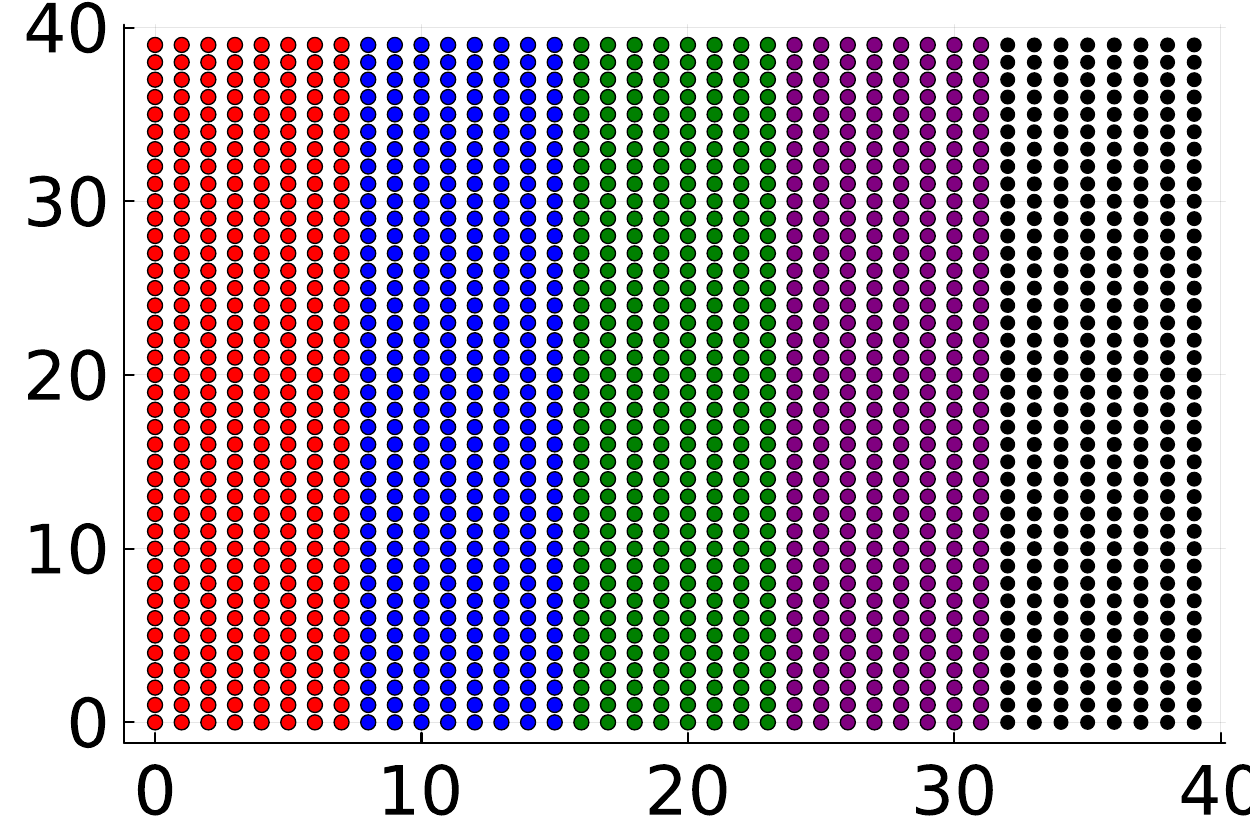}
\caption{Five disjoint subgraphs}
\label{fig31.a}
\end{subfigure}
\hfill
\begin{subfigure}[b]{0.32\textwidth}
\centering
\includegraphics[width=\textwidth]{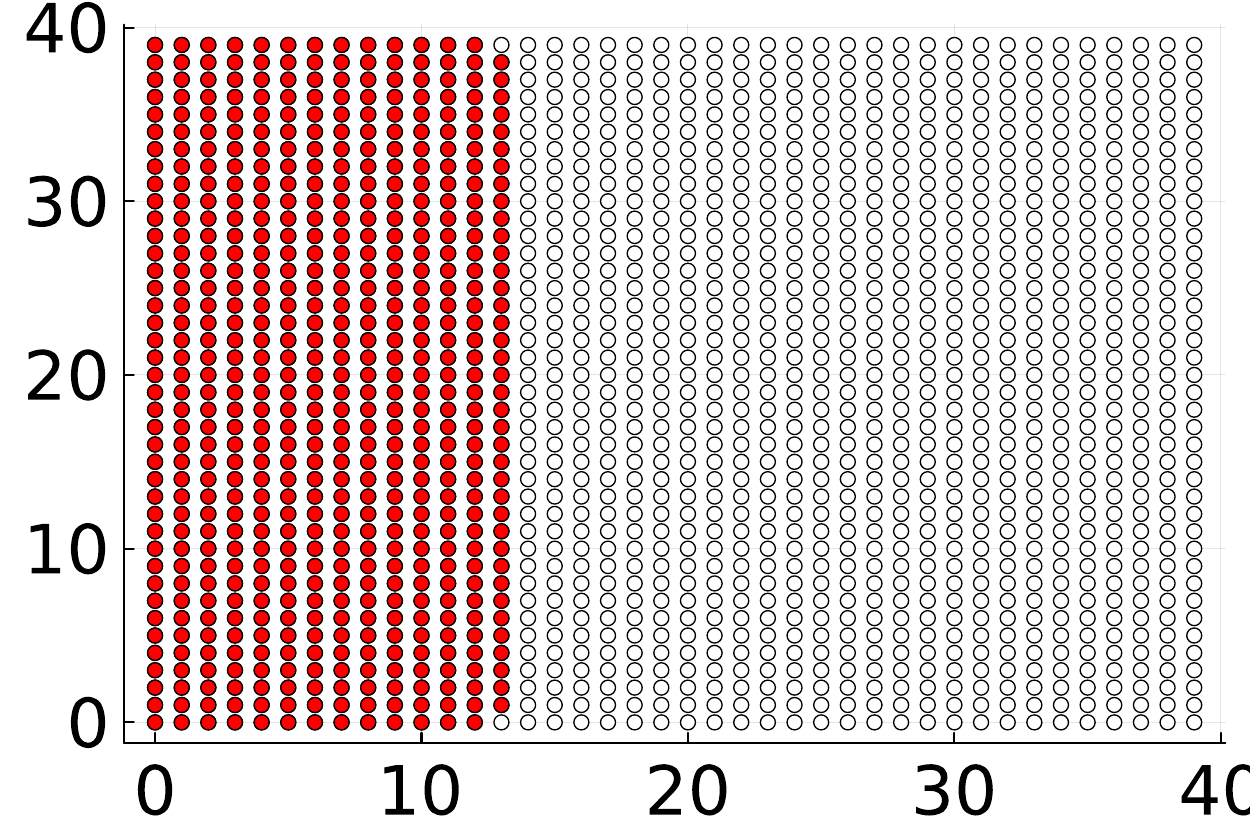}
\caption{$\mW_1$}
\label{fig31.b}
\end{subfigure}
\hfill
\begin{subfigure}[b]{0.32\textwidth}
\centering
\includegraphics[width=\textwidth]{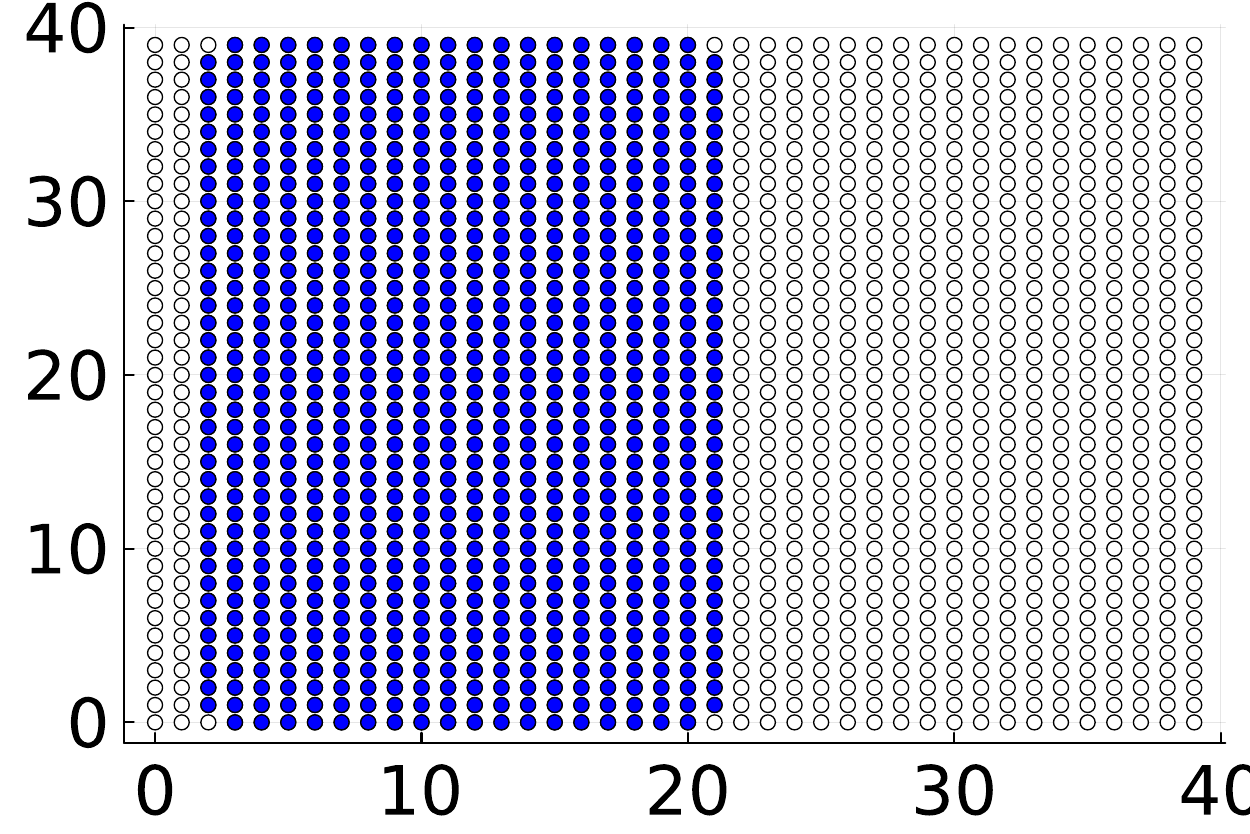}
\caption{$\mW_2$}
\label{fig31.c}
\end{subfigure}
\vskip-0.4cm
\begin{subfigure}[b]{0.32\textwidth}
\centering
\includegraphics[width=\textwidth]{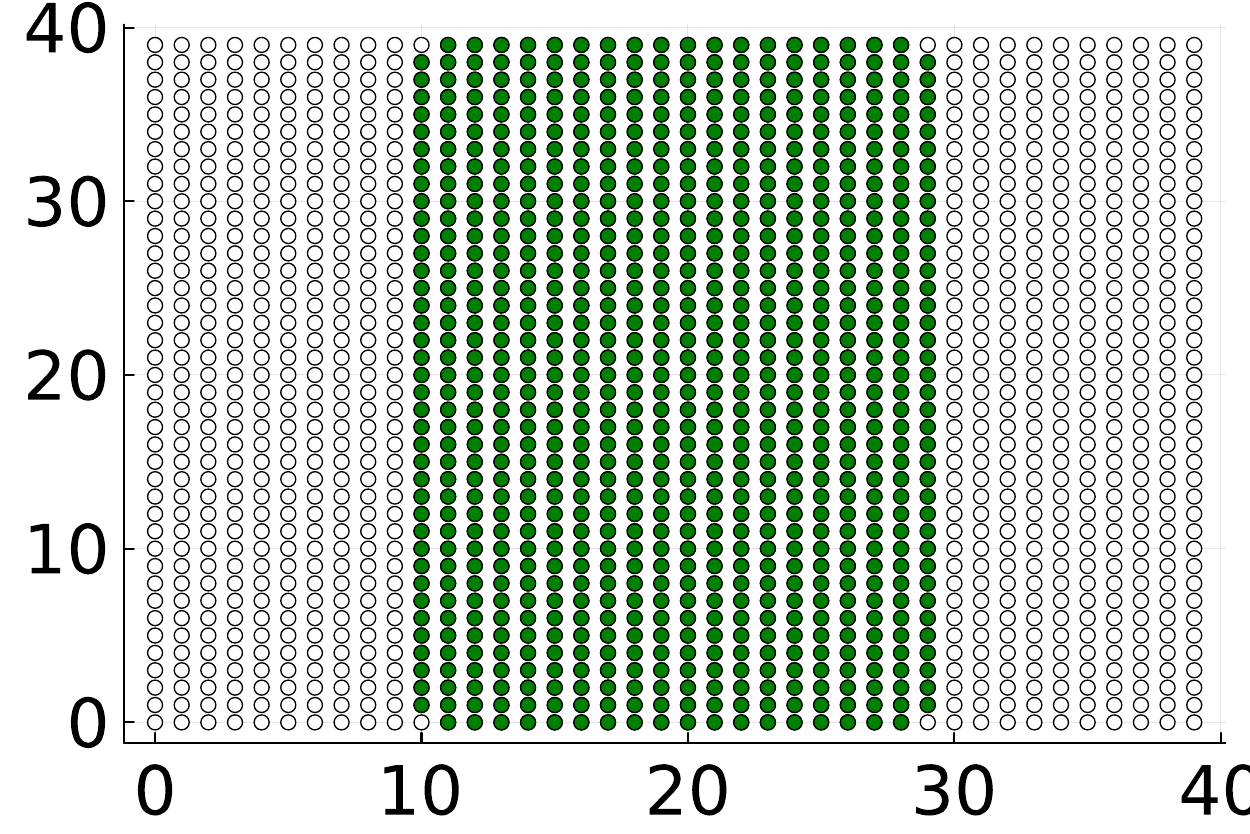}
\caption{$\mW_3$}
\label{fig31.d}
\end{subfigure}
\hfill
\begin{subfigure}[b]{0.32\textwidth}
\centering
\includegraphics[width=\textwidth]{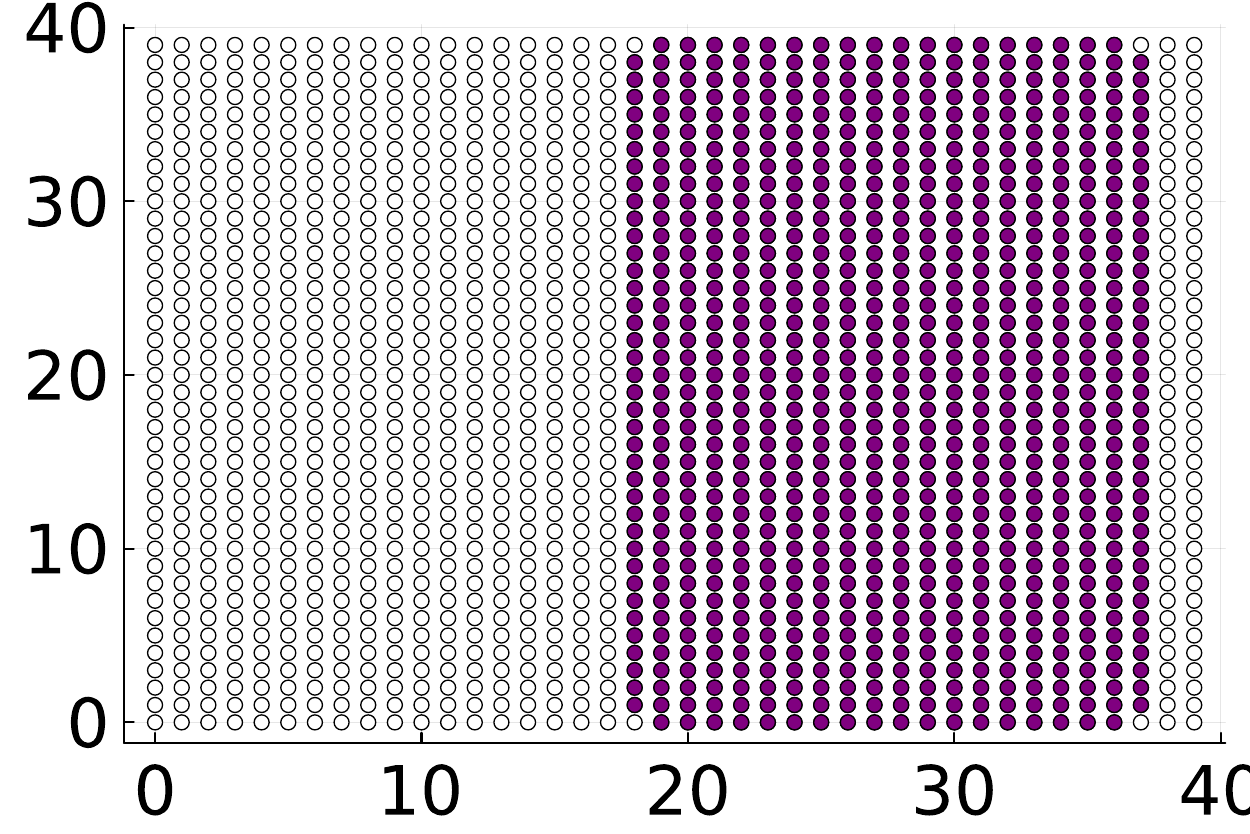}
\caption{$\mW_4$}
\label{fig31.e}
\end{subfigure}
\hfill
\begin{subfigure}[b]{0.32\textwidth}
\centering
\includegraphics[width=\textwidth]{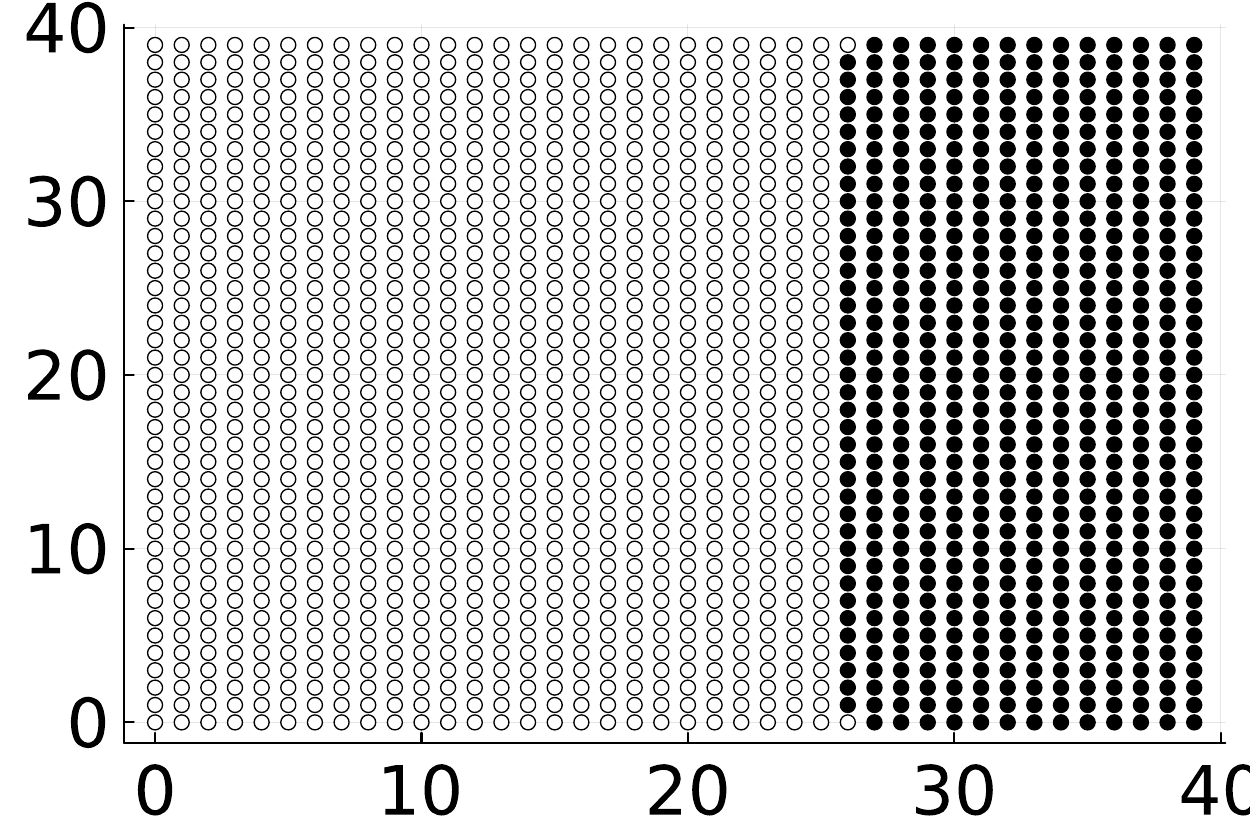}
\caption{$\mW_5$}
\label{fig31.f}
\end{subfigure}
\setlength{\belowcaptionskip}{-10pt}
\vskip-0.5cm
\caption{Illustration of the graph decomposition. The full graph is $40\times 40$ mesh grid. \cref{fig31.a} shows one way to decompose the full graph into five disjoint subgraphs, where different colors represent different subgraphs. \cref{fig31.b}--\cref{fig31.f} show the five overlapping subgraphs with the overlap size $b=6$.}\label{fig31} 
\end{figure}

\noindent $\bullet$ \textbf{FOGD.} We consider five different overlap sizes $b \in \{1,2,4,6,8\}$ and set $\eta_1 = 5$, $\eta_2 = 0.1$, $\beta = 0.1$, and $\mu = 1$. For backtracking line search, we first initialize the stepsize $\alpha_{\tau}$ to 1 and continue to decrease the stepsize by $\alpha_{\tau} \leftarrow 0.9\alpha_{\tau}$ until the Armijo condition \eqref{eq34} is satisfied. To demonstrate global convergence, we generate five initial iterates randomly~from~$(\bz^0, \bu^0, \bl^0)\overset{\text{iid}}{\sim} \text{Uniform}(-10^2,10^2)$. Then, we apply different values of $b$ to solve \eqref{eqn31} for each initialization. To demonstrate local convergence rate, we use the final iterate from one FOGD trajectory with overlap size $b=8$ as the reference solution. For each overlap size, we compute the nodewise error $\Psi^\tau$ relative to this reference solution along the global trajectory, and report the convergence behavior after the iterate first enters the local neighborhood $\Psi^\tau \leq 1$. We re-index the iterations after this entrance time by setting $\tilde{\tau}=0$ at the first iterate satisfying $\Psi^\tau \leq 1$.

{\red 
Then, we briefly discuss to what extent the above numerical setting satisfies the assumptions used in the analysis. Let $\Omega_h^\circ$ and $\partial\Omega_h$ denote the interior and boundary nodes of $\Omega_h$, respectively. Hence, the discrete constraint mapping can be written as \vspace{-0.15cm}
\begin{equation*}
\begin{bmatrix}
\{\bz_i\}_{i\in \partial\Omega_h}\\
\{\bc_i(\{\bz_j\}_{j\in N_{\Omega_h}[i]}, \{\bu_j\}_{j\in N_{\Omega_h}[i]})\}_{i\in \Omega_h^\circ}
\end{bmatrix}=\begin{bmatrix}
\bnz\\
\bnz
\end{bmatrix},
\end{equation*} \vskip-0.15cm
\noindent where $\bc_i(\{\bz_j\}_{j\in N_{\Omega_h}[i]}, \{\bu_j\}_{j\in N_{\Omega_h}[i]}) = 4z_i-\sum_{j \in N_{\Omega_h}(i)} z_j + z_i^p - u_i$
and $N_{\Omega_h}(i) = N_{\Omega_h}[i] \setminus \{i\}$ is the open nearest-neighbor set of node $i$. To verify Assumption \ref{as42}, we order the primal variable as $(\bz[\partial\Omega_h], \bu[\partial\Omega_h], \bz[\Omega_h^\circ], \bu[\Omega_h^\circ])$ and write the Jacobian \(G^\tau\) as \vspace{-0.1cm}
\begin{equation*}
G^\tau \coloneqq
\begin{bmatrix}
I & \bnz & \bnz & \bnz\\
A^\tau[\Omega_h^\circ][\partial\Omega_h] & \bnz & A^\tau[\Omega_h^\circ][\Omega_h^\circ] & -I
\end{bmatrix},
\end{equation*}
where \vspace{-0.1cm}
\begin{align*}
(A^\tau[\Omega_h^\circ][\partial\Omega_h])_{ij}
& \coloneqq
\begin{cases}
-1, & j \in N_{\Omega_h}(i)\cap \partial\Omega_h,\\
0, & \text{otherwise},
\end{cases}\\
(A^\tau[\Omega_h^\circ][\Omega_h^\circ])_{ij} & \coloneqq
\begin{cases}
4 + p \cdot (z_i^\tau)^{p-1}, & j=i,\\
-1, & j \in N_{\Omega_h}(i)\cap \Omega_h^\circ,\\
0, & \text{otherwise}.
\end{cases}
\end{align*}
In particular, we have
\begin{equation*}
G^\tau (G^\tau)^T \succeq
\begin{bmatrix}
I & \bnz\\
A^\tau[\Omega_h^\circ][\partial\Omega_h] & -I
\end{bmatrix}
\begin{bmatrix}
I & \bnz\\
A^\tau[\Omega_h^\circ][\partial\Omega_h] & -I
\end{bmatrix}^{T} \eqqcolon MM^T.
\end{equation*}
Noting that $A^\tau[\Omega_h^\circ][\partial\Omega_h]$ is actually independent of $\tau$, we conclude that there exists $\gamma_G \in (0, 1]$ such that $G^{\tau} (G^{\tau})^{T} \succeq \gamma_G I$ for any $\tau \geq 0$, verifying Assumption \ref{as42}. Assumption \ref{as45} is satisfied by the $40\times 40$ grid graph, whose neighborhoods and overlapping subgraphs grow polynomially with the graph distance and the overlap size $b$.
Assumptions \ref{as44}, \ref{as51}, and \ref{as62} should be understood locally near the strict local solution $(\bxs, \bls)$. After discretization, both the objective and constraint functions are polynomials of the discrete variables. Hence, once the iterates enter a neighborhood of $(\bxs,\bls)$, the iterates and line-search trial points remain in a compact set on which the blocks of $\hHt$, $H^\tau$, and $G^\tau$ are uniformly bounded, while $G_{ij}(\bx)$ and $H_{ij}(\bx,\bl)$ are Lipschitz continuous. Finally, Assumptions \ref{as41} and \ref{as61} are enforced algorithmically through the Hessian modification $\hat H^\tau$ in FOGD.

\vskip4pt
\noindent$\bullet$ \textbf{ADMM.}
For comparison, we also consider ADMM for the same discretized problem. Let $V_\ell$, $\ell=1,\ldots,5$, denote the five disjoint strips above, and let $\mW_\ell = N_{\Omega_h}[V_\ell]$ be the one-hop enlargement of $V_\ell$.
For each $\ell=1,\ldots,5$, we introduce local copies $(\bz^{(\ell)},\bu^{(\ell)})$ on $\mW_\ell$, global consensus variables $(\bar{\bz},\bar{\bu})$ on the full grid, the local objective contribution $f_\ell(\bz^{(\ell)},\bu^{(\ell)})$ on $V_\ell$, and the local feasible set $\mathcal X_\ell$ obtained by imposing the discretized PDE equations on the core nodes in $V_\ell$ using the local variables on $\mW_\ell$. This gives the consensus reformulation:
\begin{align*}
\min_{\{(\bz^{(\ell)},\bu^{(\ell)})\}_{\ell=1}^5,\bar{\bz},\bar{\bu}}\quad
& \sum_{\ell=1}^5 f_\ell(\bz^{(\ell)},\bu^{(\ell)}), \\
\text{s.t.}\quad\quad\quad
& (\bz^{(\ell)},\bu^{(\ell)}) \in \mathcal X_\ell,\qquad \ell=1,\ldots,5,\\
& \bz_i^{(\ell)}=\bar \bz_i,\quad \bu_i^{(\ell)}=\bar \bu_i,\qquad
i\in \mW_\ell,\ \ell=1,\ldots,5.
\end{align*}
ADMM then alternates between five independent local constrained nonlinear solves, a consensus averaging step for $(\bar{\bz},\bar{\bu})$, and dual updates associated with the consensus constraints. In the comparison below, ADMM uses the same five-strip partition and the same initializations as FOGD. For this problem, we use $\sigma$ to denote the augmented Lagrangian penalty parameter in ADMM that applies to both $\bz$ and $\bu$.

\begin{figure}[thb!]
\centering
\begin{subfigure}[b]{0.32\textwidth}
\centering
\includegraphics[width=\textwidth]{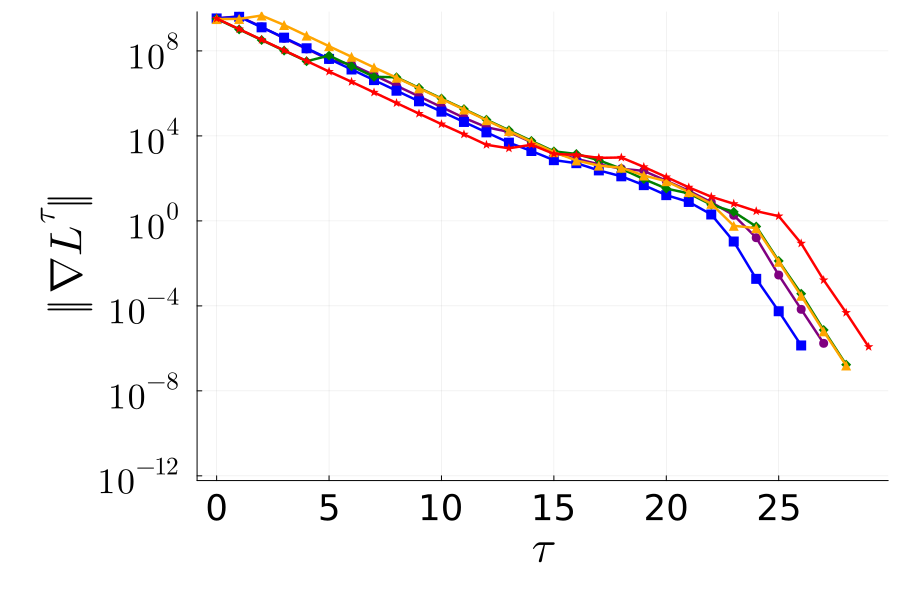}\caption{$b=1$}\label{fig33a}
\end{subfigure}
\hfill
\begin{subfigure}[b]{0.32\textwidth}
\centering
\includegraphics[width=\textwidth]{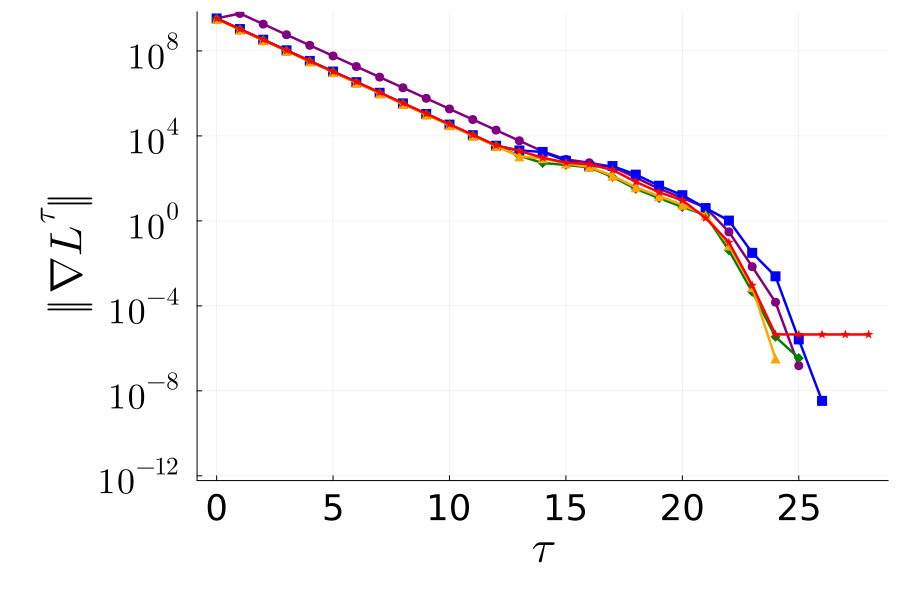}\caption{$b=2$}\label{fig33b}
\end{subfigure}
\hfill
\begin{subfigure}[b]{0.32\textwidth}
\centering
\includegraphics[width=\textwidth]{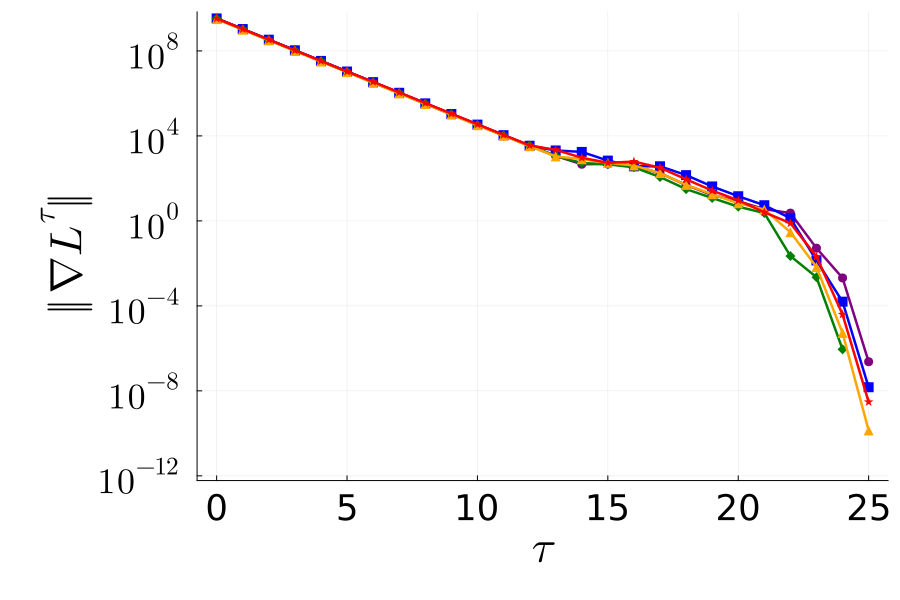}\caption{$b=4$}\label{fig33c}
\end{subfigure}
\begin{subfigure}[b]{0.32\textwidth}
\centering
\includegraphics[width=\textwidth]{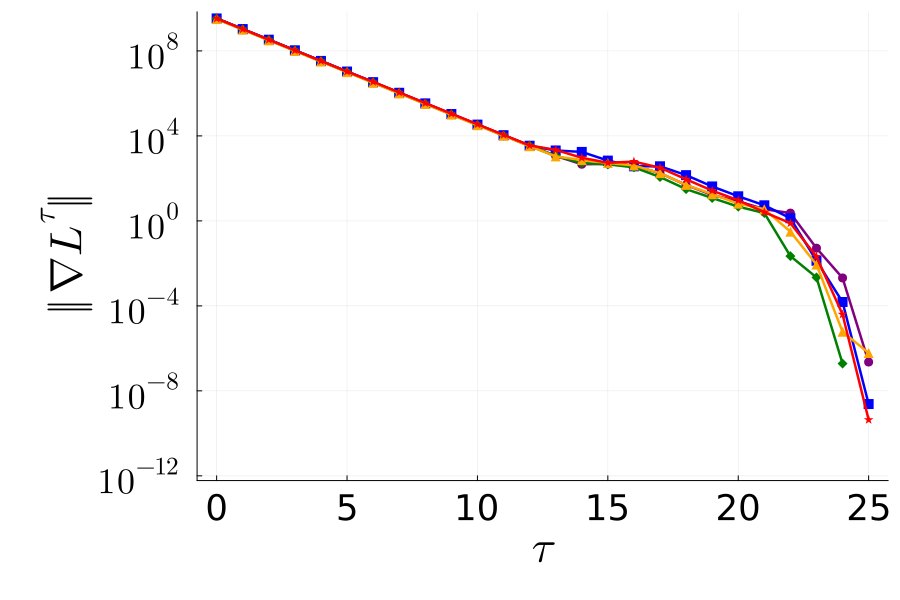}\caption{$b=6$}\label{fig33d}
\end{subfigure}
\hfill
\begin{subfigure}[b]{0.32\textwidth}
\centering
\includegraphics[width=\textwidth]{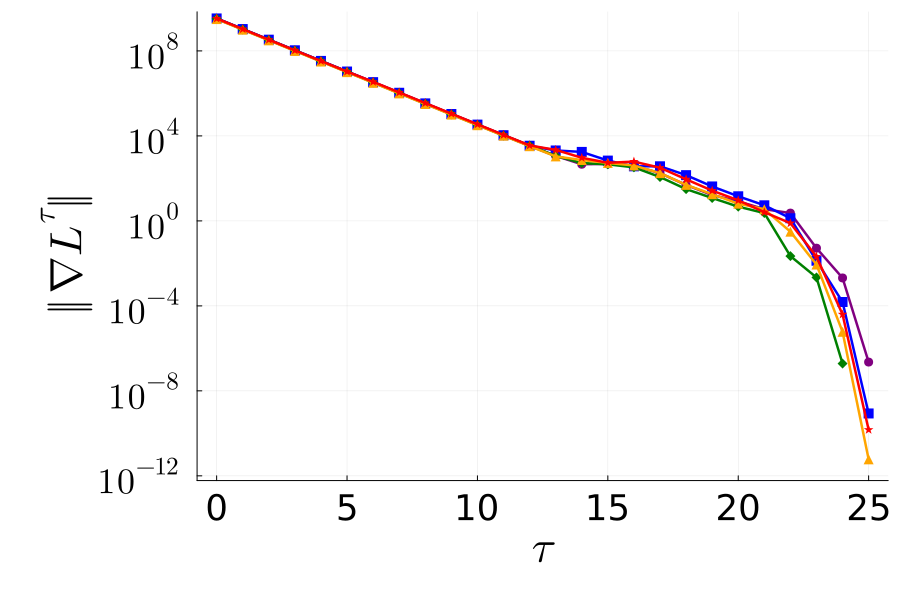}\caption{$b=8$}\label{fig33e}
\end{subfigure}
\hfill
\begin{subfigure}[b]{0.32\textwidth}
\centering
\includegraphics[width=\textwidth]{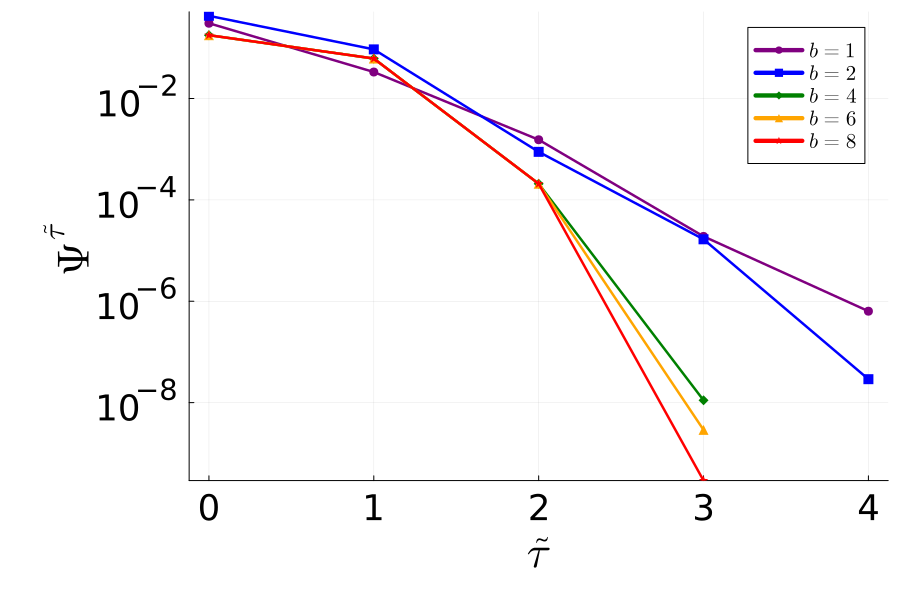}\caption{Local convergence}\label{fig33f}
\end{subfigure}
\setlength{\belowcaptionskip}{-10pt}
\vskip-10pt
\caption{Global and local convergence plots of FOGD for the semilinear elliptic PDE problem. The global convergence plots are displayed in \cref{fig33a}--\ref{fig33e}, where we plot the KKT residual of each iteration. In each figure, five lines with different colors represent five initializations, and they all converge, indicating that FOGD converges for any initializations. The local convergence plot is displayed in \cref{fig33f}, where we show the nodewise error $\Psi^{\tilde{\tau}}$ for different overlap sizes $b$, with $\tilde{\tau}=0$ marking the first iterate entering the local neighborhood. On this plot, five lines represent five overlap sizes. We observe that $\Psi^{\tilde{\tau}}$ decays at least linearly with respect to $\tilde{\tau}$, and a larger $b$ leads to a faster convergence rate.}\label{fig33}
\end{figure}

\begin{table}[thb!]
\centering
\footnotesize
\renewcommand{\arraystretch}{1.08}
\caption{Global convergence summary for FOGD and ADMM on Problem \eqref{eqn31}. The reported quantities are averages over five random initializations. For FOGD, parameter refers to the overlap size $b$, while for ADMM it refers to the penalty parameter $\sigma$.}
\label{tab61}
\begin{tabular}{ccccc}	
\hline
Method & Parameter & Final KKT ($10^{-7}$) & Flops/Iter ($10^{5}$) & Time (s) \\
\hline
\multirow{5}{*}{FOGD}
& 1   & 4.97   & 9.65 & 0.69 \\
& 2   & 9.51   & 9.62 & 0.75 \\
& 4   & 1.49   & 9.61 & 0.86 \\
& 6   & 2.05   & 9.61 & 1.11 \\
& 8   & 0.84   & 9.61 & 1.49 \\
\hline
\multirow{5}{*}{ADMM}
& 0.01  & 307.76 & 49.99 & 301.97 \\
& 0.1   & 92.57  & 49.99 & 287.71 \\
& 1     & 17.52  & 49.97 & 154.70 \\
& 10    & 9.65   & 49.87 & 37.20 \\
& 100   & 9.98   & 49.99 & 237.49 \\
\hline
\end{tabular}		
\end{table}
}

\vskip4pt
\noindent$\bullet$ \textbf{Results.}
Our results are summarized in \cref{fig33,tab61}. For the global convergence 
experiments, both FOGD and ADMM are run for at most 5000 iterations, with each run terminated earlier if a prescribed stopping criterion is satisfied. As shown in \cref{fig33a}--\ref{fig33e} and \cref{tab61}, FOGD with different overlap sizes converges for all five random initializations, validating our global convergence result (cf. \cref{thm52}).
From \cref{fig33f}, we observe that the nodewise error $\Psi^{\tau}$ decays at least linearly with respect to $\tau$, and that the convergence rate increases as the overlap size $b$ increases. This observation is consistent with our local convergence result (cf. \cref{lm62}).

{\red
We also compare FOGD and ADMM in terms of global convergence behavior, flops per iteration, and running time. As reported in \cref{tab61}, FOGD consistently achieves smaller KKT residuals with fewer flops per iteration and shorter running times across the tested overlap sizes. In contrast, ADMM is considerably more sensitive to the choice of $\sigma$. Among the tested penalty parameters, the best ADMM runs are obtained with $\sigma=10$ and $100$; however, these runs still require substantially more computation time, and their flops per iteration are about five times larger than those of FOGD. Moreover, even under these better parameter choices, the final KKT residuals achieved by ADMM are not better than those obtained by FOGD with moderate or large overlap sizes. Overall, for this problem, FOGD demonstrates both more robust global convergence behavior and superior computational efficiency compared to ADMM.
}

{\red 

\subsection{Boundary Heating Problem}\label{sec6:heating}

In this example, we let $\Omega \subseteq \mR^3$ be a three-dimensional domain and consider the following boundary heating problem in \cite{Curtis2012note}:
\begin{subequations}\label{eqn32}
\begin{align}
\min_{\bz, \bu}\;\; & f(\bz, \bu) = \sum_{j=1}^{N_S}\int_{\Omega_j} (\bz(\bomega) - z_j^{\min})^2 d\bomega + \kappa \int_{\partial\Omega} \bu(\xi)^2 da(\xi),\\
\text{s.t.} \;\; & g(\bz, \bu) = \begin{cases}
-\Delta \bz = 0\quad \text{in $\Omega$},\\
\frac{\partial \bz}{\partial n} = \chi\cdot (\bu - \bz^4)\quad \text{on $\partial\Omega$},
\end{cases}
\end{align}
\end{subequations}
where $\bz:\Omega\rightarrow \mR$ is the temperature field, $\bu:\partial\Omega\rightarrow \mR$ is the boundary heating input, $\Omega_j \subseteq \Omega$, $j=1,\ldots,N_S$, are the target sub-regions, $z_j^{\min}$ are the associated target temperatures, $\chi > 0$ is Stefan's constant, $\frac{\partial \bz}{\partial n}$ denotes the outward normal derivative on $\partial\Omega$, and $da$ is the surface measure on $\partial\Omega$. 
We let $\chi = 1$, $N_S=2$, $\Omega = (0,1)^3$, $\Omega_1 = [0.1,0.2] \times [0.05,0.3] \times [0,0.1]$, $\Omega_2 = [0.8,1] \times [0.75,1] \times [0.7,1]$, $z_1^{\min} = 2.5$, $z_2^{\min} = 2$, and $\kappa = 10^{-3}$.
To solve~Problem \eqref{eqn32}, we discretize $\Omega$ by a uniform Cartesian grid with mesh size $h = 1/30$, denoted by $\Omega_h = h\cdot \{0,\ldots,30\}^3$. Thus, the full grid contains $31\times 31\times 31$ nodes. We let $\Omega_h^\circ$ and $\partial\Omega_h$ denote the interior and boundary nodes of $\Omega_h$, respectively. The discrete state variables are $\bz=\{\bz_i\}_{i\in\Omega_h}$, and the discrete control variables are $\bu=\{\bu_i\}_{i\in\partial\Omega_h}$, since the control acts~only~on the boundary.

We approximate the objective in \eqref{eqn32} by a simple node-based quadrature rule. Specifically, each grid node is assigned a volume weight for the state-tracking term, while each boundary node is assigned a surface-area weight for the boundary-control regularization term. This yields
\begin{equation*}
f_h(\bz,\bu) = \sum_{j=1}^{N_S}\sum_{i\in\Omega_h} w_i^{\Omega}\mathbf 1_{\{\bomega_i\in\Omega_j\}} (\bz_i-\bz_j^{\min})^2 + \kappa\sum_{i\in\partial\Omega_h}w_i^{\partial}\bu_i^2,
\end{equation*}
where $\bomega_i$ is the coordinate of node $i$, $w_i^\Omega$ is its volume weight, and $w_i^\partial$ is its boundary surface-area weight.
We discretize the PDE constraints with standard finite differences. At an interior node and a boundary node, respectively, we impose
\begin{align*}
\bc_i^{\mathrm{int}}(\{\bz_j\}_{j\in N_{\Omega_h}[i]}) & =
\frac{1}{h^2} \left( 6z_i-\sum_{j\in N_{\Omega_h}(i)} z_j \right) =0,
&& i\in\Omega_h^\circ,\\
\bc_i^{\mathrm{bd}}(\{\bz_j\}_{j\in N_{\Omega_h}[i]}, \{\bu_j\}_{j\in N_{\Omega_h}[i]}) & = \frac{1}{|\mathcal A(i)|} \sum_{q\in\mathcal A(i)}
\frac{z_i-z_{i^{-}(q)}}{h} - \chi(u_i-z_i^4) =0,
&& i\in\partial\Omega_h.	
\end{align*}
Here, $N_{\Omega_h}(i) = N_{\Omega_h}[i]\backslash\{i\}$ is the open nearest-neighbor set, $\mathcal A(i)$ collects the boundary-face directions incident to node $i$, and $i^{-}(q)$ is the inward neighbor in direction $q$. The boundary equation averages the corresponding one-sided normal differences. Hence, the discretization gives one equality constraint per grid node: an interior equation on $\Omega_h^\circ$ and a boundary heating equation on $\partial\Omega_h$. We form the graph by connecting nearest-neighbor grid nodes and split it into three slabs in the first coordinate direction, with $V_\ell=I_\ell\times \{0,\ldots,30\}^2$ for $\ell=1,2,3$, where $I_1=\{0,\ldots,9\}$, $I_2=\{10,\ldots,19\}$, and $I_3=\{20,\ldots,30\}$.

\vskip4pt
\noindent$\bullet$ \textbf{FOGD.}
We use the same multi-start and Armijo backtracking setup as in Section \ref{sec6:pde}, with $b\in\{1,2,4,6,8\}$, $\eta_1=\eta_2=0.05$, $\beta=0.05$, $\mu=3$, initial stepsize $1$, and shrinkage factor $0.9$. The five initial points are sampled independently from $\bz_i^0\sim\mathrm{Uniform}(0,2)$, $\bu_i^0\sim\mathrm{Uniform}(0,10)$, and $\bl_i^0\sim\mathrm{Uniform}(-1,1)$, with $\bu_i^0$ defined only on $\partial\Omega_h$ while the other variables defined on $\Omega_h$. Each initial point is solved using all five overlaps. For local convergence, we report the nodewise error relative to the final iterate of the same trajectory after it enters the local neighborhood. Same as Section \ref{sec6:pde}, we re-index the iterations after this entrance time.

We next discuss why the assumptions hold for this example for the same reasons as in Section \ref{sec6:pde}. Let us order the constraints as $(\{\bc_i^{\mathrm{bd}}\}_{i\in\partial\Omega_h},\{\bc_i^{\mathrm{int}}\}_{i\in\Omega_h^\circ})$ and the variables as $(\bu,\bz[\Omega_h^\circ],\bz[\partial\Omega_h])$. Then the Jacobian has the block form
\begin{equation*}
G^\tau = \begin{bmatrix}
\\[-0.6cm]
-\chi I & D[\partial\Omega_h][\Omega_h^\circ] & \vdots & A^\tau[\partial\Omega_h][\partial\Omega_h]\\
\bnz & L[\Omega_h^\circ][\Omega_h^\circ] & \vdots & L[\Omega_h^\circ][\partial\Omega_h]
\end{bmatrix} \eqqcolon \begin{bmatrix}
\\[-0.62cm]
M & \vdots & Q^\tau
\end{bmatrix}.
\end{equation*}
Here, $L[\Omega_h^\circ][\Omega_h^\circ]$ and $L[\Omega_h^\circ][\partial\Omega_h]$ denote the interior-interior and interior-boundary blocks of the seven-point finite-difference operator, respectively, while $D[\partial\Omega_h][\Omega_h^\circ]$ collects the dependence of the one-sided boundary differences on the interior state variables. The block $A^\tau[\partial\Omega_h][\partial\Omega_h]$ contains the boundary-state derivatives of the boundary equations. More specifically, for $i\in\partial\Omega_h$, its diagonal entry is $1/h+4\chi(z_i^\tau)^3$, while its off-diagonal entries are $-1/(|\mathcal A(i)|h)$ whenever an inward neighbor $i^{-}(q)$ is also a boundary node. 
Since $\chi=1$ and $L[\Omega_h^\circ][\Omega_h^\circ]$ is the positive definite discrete Dirichlet Laplacian, the block matrix $M$ is invertible and independent of $\tau$. Consequently, $G^\tau(G^\tau)^T=MM^T+Q^\tau(Q^\tau)^T\succeq MM^T\succeq \gamma_G I$ for some $\gamma_G>0$, verifying Assumption \ref{as42}. 
Assumption \ref{as45} follows from the polynomial growth property of the three-dimensional Cartesian grid graph. Finally, Assumptions \ref{as44}, \ref{as51}, and \ref{as62} hold locally near the strict local solution because the discretized objective and constraints are smooth finite-dimensional functions involving only polynomial nonlinearities. Assumptions \ref{as41} and \ref{as61} are enforced by the Hessian modification $\hat H^\tau$.

\vskip4pt
\noindent$\bullet$ \textbf{ADMM.}
For comparison, we consider ADMM applied to a consensus reformulation of the same finite-difference discretized problem over the same three subgraphs. As in Section \ref{sec6:pde}, each subproblem introduces local copies of the state variables and boundary-control variables, and consensus constraints are imposed on the shared nodes. ADMM uses the same decomposition and initializations as FOGD. We use $\sigma$ to denote the ADMM penalty parameter.

\vskip4pt
\noindent$\bullet$ \textbf{Results.}
Our results are summarized in \cref{fig34,tab62}. For global convergence, both FOGD and ADMM are run for at most $3000$ iterations, with each run terminated earlier if a prescribed stopping criterion is satisfied. As shown in 
\cref{fig34a}--\ref{fig34e}, FOGD consistently reduces the KKT residual from all five random initializations for every chosen overlap size. The final KKT residuals reported in \cref{tab62} further demonstrate that all tested FOGD configurations reach small KKT residuals, which is consistent with the global convergence guarantee in \cref{thm52}. 
From \cref{fig34f}, we observe that after the iterates enter the local neighborhood, the nodewise error $\Psi^{\tau}$ exhibits at least linear convergence. The curves are not strictly ordered by the overlap size: the $b=1$ run enters the neighborhood much later than the $b=2$ and $b=4$ runs, so its re-indexed curve mainly reflects a short late-stage segment relative to its own final iterate. In contrast, the $b=6$ and $b=8$ runs enter the local neighborhood much earlier and reach the $10^{-3}$ level in substantially fewer steps. This behavior remains consistent~with \cref{lm62}.

We also compare FOGD and ADMM based on the results in \cref{tab62}. The final KKT column reveals a clear performance gap: every FOGD configuration attains a smaller final KKT residual than every ADMM configuration, even though ADMM is tested over several penalty parameters. Among the ADMM runs, $\sigma=0.1$ yields the smallest final KKT residual and the shortest average runtime; nevertheless, its final KKT residual is still approximately one order of magnitude larger than the FOGD residual at $b=1$. For FOGD, increasing the overlap size increases the flops per iteration, as expected, while the configurations with $b=4,6,8$ all attain final KKT residuals on the order of $10^{-4}$. In particular, $b=8$ achieves the shortest average runtime among the tested FOGD configurations. Overall, the results indicate that FOGD is less sensitive to parameter choices and more effective for this boundary heating problem.

\begin{figure}[t!]
\centering
\begin{subfigure}[b]{0.32\textwidth}
\centering
\includegraphics[width=\textwidth]{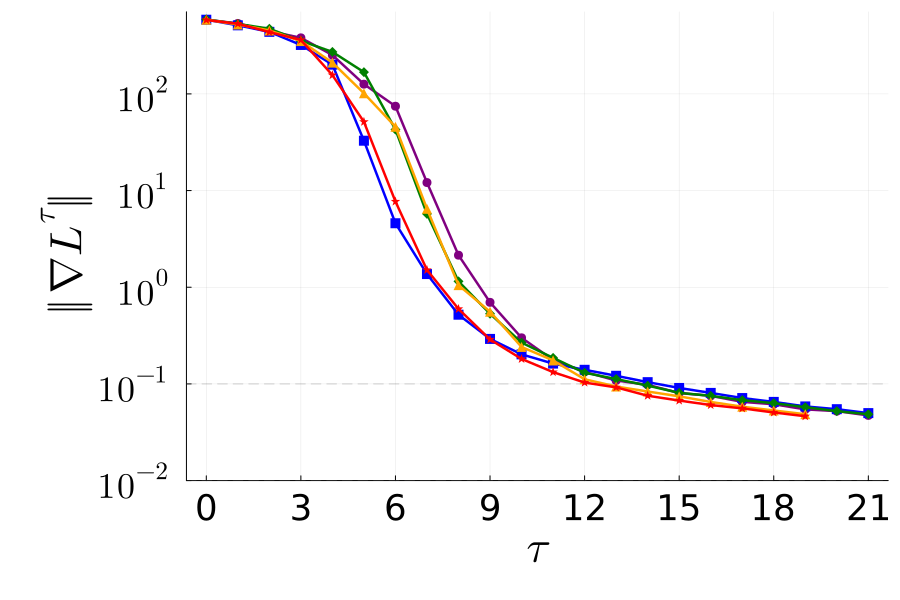}
\caption{$b=1$}
\label{fig34a}
\end{subfigure}
\hfill
\begin{subfigure}[b]{0.32\textwidth}
\centering
\includegraphics[width=\textwidth]{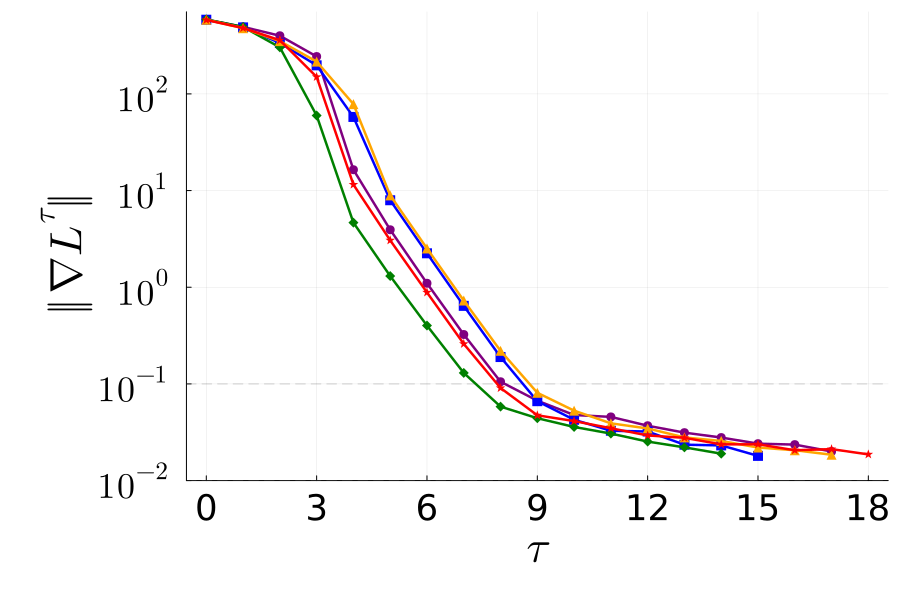}
\caption{$b=2$}
\label{fig34b}
\end{subfigure}
\hfill
\begin{subfigure}[b]{0.32\textwidth}
\centering
\includegraphics[width=\textwidth]{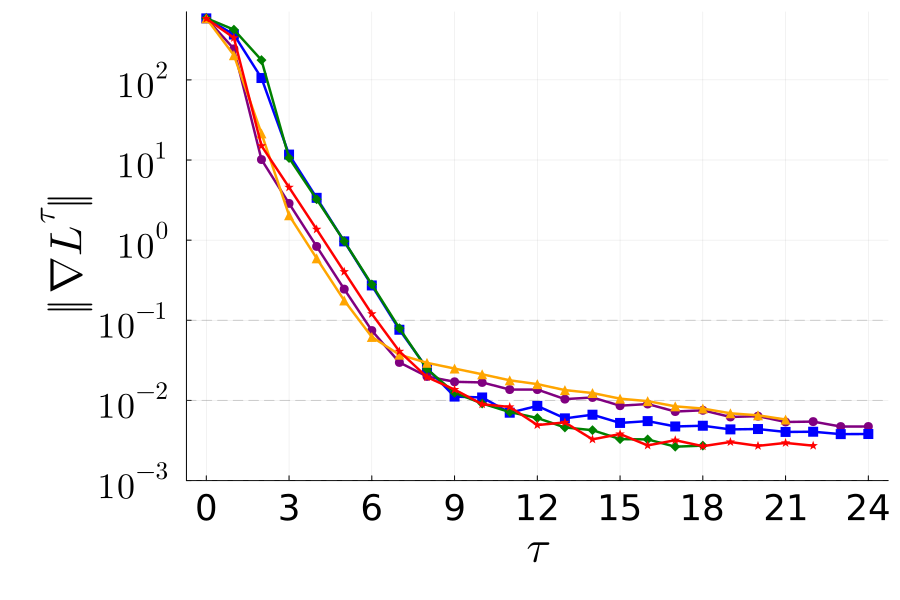}
\caption{$b=4$}
\label{fig34c}
\end{subfigure}
\begin{subfigure}[b]{0.32\textwidth}
\centering
\includegraphics[width=\textwidth]{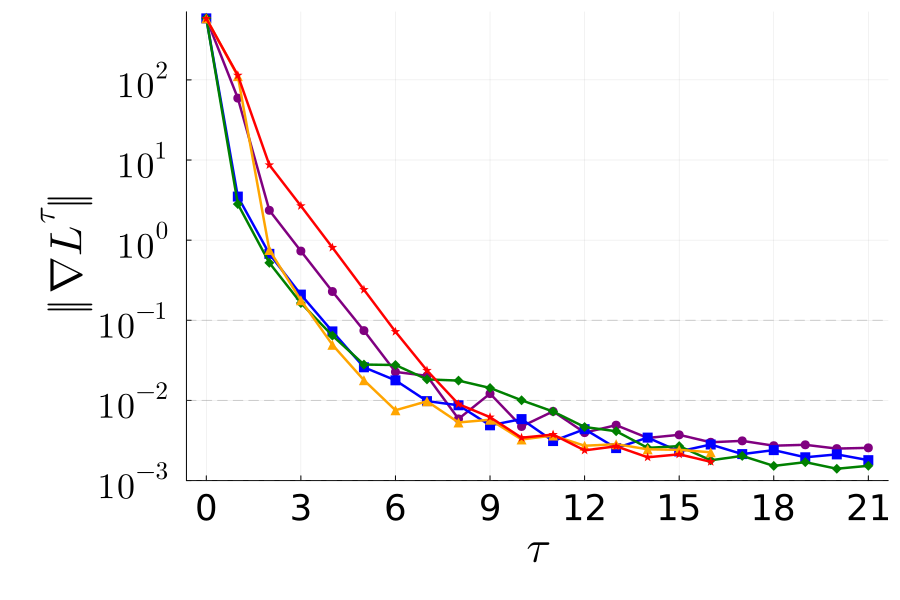}
\caption{$b=6$}
\label{fig34d}
\end{subfigure}
\hfill
\begin{subfigure}[b]{0.32\textwidth}
\centering
\includegraphics[width=\textwidth]{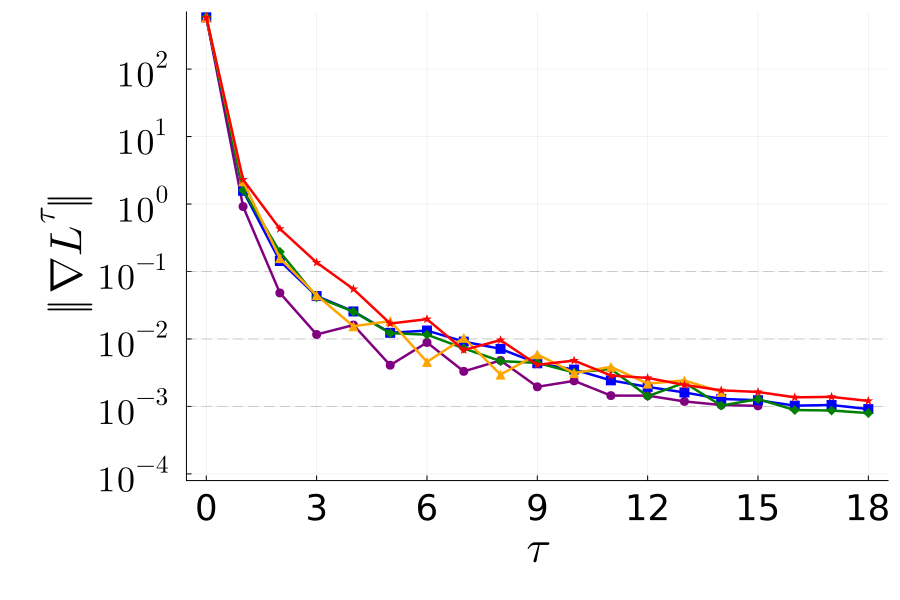}
\caption{$b=8$}
\label{fig34e}
\end{subfigure}
\hfill
\begin{subfigure}[b]{0.32\textwidth}
\centering
\includegraphics[width=\textwidth]{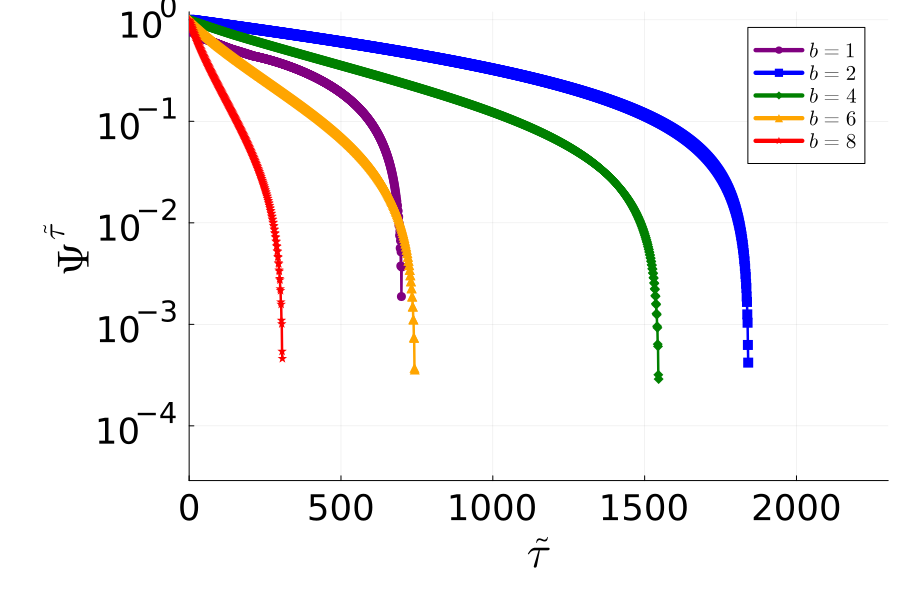}
\caption{Local convergence}
\label{fig34f}
\end{subfigure}
\setlength{\belowcaptionskip}{-10pt}
\vskip-10pt
\caption{Global and local convergence plots of FOGD for the boundary heating problem. The global convergence plots are displayed in \cref{fig34a}--\ref{fig34e}, where we plot the KKT residual of each iteration. In each figure, five lines with different colors represent five initializations, and they all converge, indicating that FOGD converges for any initializations. The local convergence plot is displayed in \cref{fig34f}, where we show the nodewise error $\Psi^{\tilde{\tau}}$ for different overlap sizes $b$, with $\tilde{\tau}=0$ marking the first iterate entering the local neighborhood. We observe that $\Psi^{\tilde{\tau}}$ decays at least linearly with respect to $\tilde{\tau}$, and a larger $b$ leads to a faster convergence rate.}\label{fig34}		
\end{figure}

\begin{table}[t!]
\centering
\footnotesize
\renewcommand{\arraystretch}{1.08}
\caption{Global convergence summary for FOGD and ADMM on Problem \eqref{eqn32}. The reported quantities are averages over five random initializations. For FOGD, parameter refers to the overlap size $b$, while for ADMM it refers to the penalty parameter $\sigma$.}
\label{tab62}
\begin{tabular}{ccccc}	
\hline
Method & Parameter & Final KKT ($10^{-5}$) & Flops/Iter ($10^{13}$) & Time ($10^3$ s) \\
\hline
\multirow{5}{*}{FOGD}
& 1   & 44.04 & 3.72  & 7.07 \\
& 2   & 28.05 & 5.19  & 8.73 \\
& 4   & 9.98  & 9.37  & 12.28 \\
& 6   & 9.97  & 15.52 & 10.48 \\
& 8   & 9.96  & 24.03 & 4.66 \\
\hline
\multirow{5}{*}{ADMM}
& 0.01  & 1964.73   & 3.72 & 8.79 \\
& 0.1   & 459.63    & 3.72 & 7.37 \\
& 1     & 1085.48   & 3.72 & 10.37 \\
& 10    & 10758.70  & 3.72 & 8.78 \\
& 100   & 107482.21 & 3.72 & 10.86 \\
\hline
\end{tabular}		
\end{table}

}

\section{Conclusion}\label{sec7}

In this paper, we introduced a fast overlapping graph decomposition (FOGD) method for solving graph-structured nonlinear programs. We embedded the OGD technique into the SQP framework to compute approximate Newton directions in a parallel environment, and selected suitable stepsizes by performing line search on an exact augmented Lagrangian merit function. We established the global and local linear convergence guarantees for FOGD, and showed that the linear rate improves exponentially in terms of the overlap size. FOGD balances the flexibility and efficiency of centralized optimization solvers, addresses the slow convergence disadvantage of existing parallel methods with exclusive decomposition, and complements the overlapping Schwarz schemes that have expensive computational costs on local machines and lack global convergence guarantees. Numerical results on {\red two} PDE-constrained problems, a semilinear elliptic problem and {\red a boundary heating problem}, verified our theoretical findings.

Further extensions of FOGD are a promising research direction, inspired by empirical evidence that the Schwarz schemes have exhibited superior performance compared to the popular parallel solver, the alternating direction method of multipliers (ADMM) \cite{Shin2020Decentralized}. It would be worthwhile to incorporate OGD techniques into ADMM and other advanced SQP frameworks to enhance their performance under exclusive graph decomposition.

\appendix
\numberwithin{equation}{section}
\numberwithin{theorem}{section}

\section{Glossary of Main Notation}
To improve readability, especially in the proofs of Sections \ref{sec3} and \ref{sec4}, Table \ref{p1_tb1} provides a glossary of the main notation used throughout the paper. It collects the graph-, partition-, and boundary-related symbols that appear most frequently in our analysis.

\begin{table}[t]
\centering
\footnotesize
\setlength{\tabcolsep}{4pt}
\renewcommand{\arraystretch}{1.15}
\begin{tabularx}{\textwidth}{|>{\centering\arraybackslash}m{2.8cm}||>{\raggedright\arraybackslash}X|}
\hline
$N^b_{\mG}[\mW]$ &
$\{i\in \mV:d_{\mG}(i,\mW)\leq b\}$, the $b$-hop neighborhood of $\mW$.\\
\hline
$N_{\mG}[\mW]$ &
$N_{\mG}[\mW] = N^1_{\mG}[\mW]$, with the superscript omitted when $b=1$.\\
\hline
\multirow{2}{*}{$N_{\mG}(\mW)$} &
$N_{\mG}(\mW) \coloneqq N_{\mG}[\mW]\backslash \mW$, the neighborhood of $\mW$ excluding itself (i.e., the open neighborhood).\\
\hline
\multirow{2}{*}{$d_{\mG}(i,\mW)$} &
the number of edges in the shortest path connecting node $i$ to the set $\mW$ on the graph $\mG$.\\
\hline
\multirow{2}{*}{$\{\mV_{\ell}\}_{\ell\in[M]}$} &
a disjoint partition of the full node set $\mV$, where $M$ is the number of subsets (or subdomains).\\
\hline
$\mV_{\ell}$ &
the $\ell$-th disjoint subset of the full node set $\mV$.\\
\hline
\multirow{2}{*}{$\mW_{\ell}$} &
an overlapping subdomain associated with $\mV_{\ell}$, satisfying
$N^b_{\mG}[\mV_{\ell}] \subseteq \mW_{\ell}$; in the simple case, one may take
$\mW_{\ell}=N^b_{\mG}[\mV_{\ell}]$.\\
\hline
$\tmW_{\ell}$ &
$\tmW_{\ell}\coloneqq N_{\mG}\!\left(N_{\mG}(\mW_{\ell})\right)\cap \mW_{\ell}$,
the set of internal boundary nodes of $\mW_{\ell}$.\\
\hline
$\hat{\mW}_{\ell}$ &
$\hat{\mW}_{\ell}\coloneqq N_{\mG}(\mW_{\ell})\cup \tmW_{\ell}$,
the set of internal and external boundary nodes of $\mW_{\ell}$.\\
\hline
\multirow{2}{*}{$\bar{\mW}_{\ell}$} &
$\bar{\mW}_{\ell} \coloneqq N_{\mG}(\mW_{\ell})\cup N_{\mG}\!\left(N_{\mG}[\mW_{\ell}]\right)$,
the set of external boundary nodes of $\mW_{\ell}$ within depth two.\\
\hline
\multirow{2}{*}{$(\bto_{\ell},\btze_{\ell})$ and $(\bo,\bze)$} &
subdomain and full-domain primal-dual variables used in the decomposition or composition operators; see Definition \ref{df22}.\\
\hline	
\end{tabularx}
\caption{Glossary of Main Notation.}
\label{p1_tb1}
\end{table}

\section{Proof of \cref{lm41}}\label{alm41}
Throughout the proof, we omit the iteration index $\tau$ for the simplicity of notation. We first prove (a). We note that $G_{\ell}$ is a submatrix of $G = \nabla \bc$, and for any nodes $i\in \mW_\ell\backslash \tmW_{\ell}$ and $j\in \mV\backslash\mW_{\ell}$, $[G_\ell]_{i,j} = G_{i,j} = \bnz$ since $d_G(i,j)>1$. Thus, we permute $G$ as
\begin{equation*}
G_P \coloneqq \begin{bmatrix}
G_{\ell} & \bnz\\
J_1 & J_2
\end{bmatrix},
\end{equation*}
where $J_1$ and $J_2$ are nonzero matrices. Since $G_{\ell}G_{\ell}^T$ is the diagonal submatrix of $G_PG_P^T$, and $G_PG_P^T\succeq \gamma_G I$ by Assumption \ref{as42}, we complete the proof of (a). For the proof of (b), we apply Assumptions \ref{as41}--\ref{as45}, \eqref{as44-e}, and \cite[Lemma 5.6]{Shin2022Exponential}, and have
\begin{equation}\label{nequ:1}
\hH + \mu G ^TG \succeq \frac{\gamma_{H}}{2} I \quad \text{ provided }\quad \mu \geq \frac{2\hU^2/\gRH+\gRH+\hU}{\gamma_G}.
\end{equation}
The above condition on $\mu$ is implied by the statement of the lemma. Because $\hH[\mW_{\ell}][\mW_{\ell}] + \mu (G[\mV][\mW_{\ell}])^TG[\mV][\mW_{\ell}]$ is the diagonal submatrix of $\hH + \mu G^TG$, we know \eqref{nequ:1} holds for this diagonal submatrix as well. Furthermore, for any $\bto_\ell$ such that $G_\ell\bto_\ell = \bnz$, we have
\begin{align*}
\bto_\ell^T\hH_\ell^\mu \bto_\ell & = \bto_\ell^T\hH[\mW_{\ell}][\mW_{\ell}] \bto_\ell + \mu \|G[\hat{\mW}_{\ell}][\mW_{\ell}] \bto_\ell\|^2 \\
& = \bto_\ell^T\hH[\mW_{\ell}][\mW_{\ell}] \bto_\ell + \mu \|G[\hat{\mW}_{\ell}][\mW_{\ell}] \bto_\ell\|^2 + \mu\|G_\ell\bto_\ell\|^2\\
& = \bto_\ell^T\hH[\mW_{\ell}][\mW_{\ell}] \bto_\ell + \mu \|G[\hat{\mW}_{\ell}][\mW_{\ell}] \bto_\ell\|^2 + \mu\|G_\ell\bto_\ell\|^2 + \mu\|G[\mV\backslash N_{\mG}[\mW_{\ell}]][\mW_{\ell}] \bto_{\ell}\|^2\\
& = \bto_\ell^T\hH[\mW_{\ell}][\mW_{\ell}] \bto_\ell + \mu\|G[\mV][\mW_{\ell}]\bto_\ell\|^2 \geq \gamma_{H} \|\bto_{\ell}\|^2/2,
\end{align*}
where the third equality is due to the fact that $G[\mV\backslash N_{\mG}[\mW_{\ell}]][\mW_{\ell}] = \bnz$. This completes the proof of (b). With (a), (b), and \cite[Theorem 16.2]{Nocedal2006Numerical}, we know $\mS\mP_{\ell}^{\mu}(\bd_{\ell})$ has a unique global~solution for any boundary variables $\bd_{\ell}$. We complete the proof.

\section{Proof of \cref{lm51}}\label{alm51}
By Assumptions \ref{as45}, \ref{as51}, and \cite[Lemma 5.10]{Shin2022Exponential}, we know for the constant $\hU$ in \eqref{as44-e} that
\begin{equation*}
\|H(\bx,\bl)\|\le \hU \quad \text{ and }\quad \|G(\bx)\|\le \hU,\quad\quad \forall\,\bx\in\mX,\,\bl\in\Lambda.
\end{equation*}
This proves \eqref{eq53}. Furthermore, let $Z$ be the matrix whose columns are orthonormal and span the null space $\{\bo:G\bo=\bnz\}$. Then, we can verify that
\begin{align*}
\mathcal{B} & = \begin{bmatrix}
\hat{H} & G^T\\
G & \boldsymbol{0}\\
\end{bmatrix}^{-1} = \begin{bmatrix}
\mathcal{B}_1 & \mathcal{B}_2^T\\
\mathcal{B}_2 & \mathcal{B}_3\\
\end{bmatrix},\\
\mathcal{B}_1 &= Z(Z^T \hH Z)^{-1}Z^T,\quad \mathcal{B}_2 = (GG^T)^{-1}G(I-\hH Z(Z^T\hH Z)^{-1}Z^T),\\
\mathcal{B}_3 &= (GG^T)^{-1}G(\hH Z(Z^T\hH Z)^{-1}Z^T\hH-\hH)G^T(GG^T)^{-1}.
\end{align*}
By Assumptions \ref{as41}--\ref{as45} and \eqref{as44-e}, we know
\begin{equation*}
\|\mathcal{B}_1\| \leq \frac{1}{\gamma_{H}},\quad \|\mathcal{B}_2\| \leq \frac{1}{\sqrt{\gamma_G}}\left(1 + \frac{\hU}{\gH}\right),\quad \|\mathcal{B}_3\|\leq \frac{1}{\gamma_G}\left( \hU + \frac{\hU^2}{\gH}\right).
\end{equation*}
Therefore, $\|\mathcal{B}\| \leq \max\{\|\mathcal{B}_1\|, \|\mathcal{B}_3\|\} + \|\mathcal{B}_2\|\leq \frac{2\hU^2}{\gamma_G\gamma_{H}} + \frac{2\hU}{\sqrt{\gamma_G}\gamma_{H}} \leq \frac{4\hU^2}{\gamma_{H}\gamma_G}$. This proves \eqref{eq54}.

\section{Proof of \cref{thm61}}\label{athm61}
\hskip-3pt For two positive sequences $\{a_\tau\}$ and $\{b_\tau\}$, we denote $a_\tau = o(b_\tau)$ if $a_\tau/b_\tau\rightarrow 0$. By \eqref{eq34}, it suffices to show for sufficiently large $\tau$ that
\begin{equation*}
\mLe(\bxt+\tD\bxt, \blt+\tD\blt) \le \mLe(\bxt, \blt) + \beta \begin{bmatrix}
\nbx \mLe^{\tau} \\
\nbl \mLe^{\tau} \\
\end{bmatrix}^T
\begin{bmatrix}
\tD \bxt\\
\tD \blt\\
\end{bmatrix}.
\end{equation*}
By thrice differentiability of $\{f_i, \bc_i\}_{i\in\mV}$, we know $\nabla^2 \mLe$ is continuously differentiable. Therefore, by direct calculation, we apply Taylor expansion and have
\begin{align*}
\mLe(\bxt+\tD\bxt, \blt+\tD\blt)
& \le \mLe(\bxt, \blt) + \begin{bmatrix}
\nbx \mLe^{\tau} \\
\nbl \mLe^{\tau} \\
\end{bmatrix}^T
\begin{bmatrix}
\tD \bxt\\
\tD \blt\\
\end{bmatrix} \\
& \quad + \frac{1}{2} \begin{bmatrix}
\tD \bxt\\
\tD \blt\\
\end{bmatrix}^T \mHt
\begin{bmatrix}
\tD \bxt\\
\tD \blt\\
\end{bmatrix} + o(\|(\tD \bxt, \tD \blt)\|^2),
\end{align*}
where
\begin{equation*}
\mHt \coloneqq \begin{bmatrix}
\Ht + \eta_1(\Gt)^T \Gt + \eta_2(\Ht)^2 & (I + \eta_2 \Ht)(\Gt)^T\\
\Gt(I+\eta_2 \Ht) & \eta_2 \Gt (\Gt)^T
\end{bmatrix}.
\end{equation*}
Combining the above three displays, we know $\alpha_{\tau}=1$ as long as
\begin{equation}\label{eqD4}
(1-\beta)\begin{bmatrix}
\nbx \mLe^{\tau} \\
\nbl \mLe^{\tau} \\
\end{bmatrix}^T
\begin{bmatrix}
\tD \bxt\\
\tD \blt\\
\end{bmatrix} + \frac{1}{2} \begin{bmatrix}
\tD \bxt\\
\tD \blt\\
\end{bmatrix}^T \mHt
\begin{bmatrix}
\tD \bxt\\
\tD \blt\\
\end{bmatrix} + o(\|(\tD \bxt, \tD \blt)\|^2) \le 0.
\end{equation}
We note that
\begin{align*}
& \begin{bmatrix}
\nbx \mLe^{\tau} \\
\nbl \mLe^{\tau} \\
\end{bmatrix}^T
\begin{bmatrix}
\tD \bxt\\
\tD \blt\\
\end{bmatrix} + \begin{bmatrix}
\tD \bxt\\
\tD \blt\\
\end{bmatrix}^T \mHt
\begin{bmatrix}
\tD \bxt\\
\tD \blt\\
\end{bmatrix}\\
&\quad \overset{\substack{\eqref{eq31}\\\eqref{eq35}}}{=} \begin{bmatrix}
\tD \bxt\\
\tD \blt\\
\end{bmatrix}^T \left\{ \mHt - \begin{bmatrix}
\hHt + \eta_1(\Gt)^T \Gt + \eta_2 \Ht\hHt & (I + \eta_2 \Ht)(\Gt)^T\\
\Gt(I+\eta_2 \hHt) & \eta_2 \Gt (\Gt)^T
\end{bmatrix} \right\}\begin{bmatrix}
\Delta \bxt\\
\Delta \blt\\
\end{bmatrix}\\
& \quad \quad\; + \begin{bmatrix}
\tD \bxt\\
\tD \blt\\
\end{bmatrix}^T\mHt\begin{bmatrix}
\tD \bxt - \Delta \bxt\\
\tD \blt - \Delta \blt\\
\end{bmatrix}\\
&\quad =  \begin{bmatrix}
\tD \bxt\\
\tD \blt\\
\end{bmatrix}^T\begin{bmatrix}
(I + \eta_2H^\tau) (H^\tau - \hHt) & \bnz\\
\eta_2G^\tau(H^\tau - \hHt) & \bnz
\end{bmatrix}\begin{bmatrix}
\Delta \bxt\\
\Delta \blt\\
\end{bmatrix} +  \begin{bmatrix}
\tD \bxt\\
\tD \blt\\
\end{bmatrix}^T\mHt\begin{bmatrix}
\tD \bxt - \Delta \bxt\\
\tD \blt - \Delta \blt\\
\end{bmatrix}\\
&\quad \leq o(\|(\Delta \bxt, \Delta \blt)\|^2) + (1+\delta_{\mu,b})\delta_{\mu,b} \|\mHt\|\cdot \|(\Delta \bxt, \Delta \blt)\|^2\\
& \quad \leq  o(\|(\Delta \bxt, \Delta \blt)\|^2) + 2\delta_{\mu,b}\{2\hU+(\eta_1+2\eta_2)\hU^2\}\|(\Delta \bxt, \Delta \blt)\|^2,
\end{align*}
where the second inequality from the end is due to Assumption \ref{as61} and \eqref{as44-e}, and the last inequality is due to $\|\mHt\| \leq \max(\hU+(\eta_1+\eta_2)\hU^2, \eta_2\hU^2) + (\hU+\eta_2\hU^2)$ and $\delta_{\mu,b}\leq 1$. Plugging the above result into \eqref{eqD4}, we obtain
\begin{align*}
& (1-\beta)\begin{bmatrix}
\nbx \mLe^{\tau} \\
\nbl \mLe^{\tau} \\
\end{bmatrix}^T
\begin{bmatrix}
\tD \bxt\\
\tD \blt\\
\end{bmatrix} + \frac{1}{2} \begin{bmatrix}
\tD \bxt\\
\tD \blt\\
\end{bmatrix}^T \mHt
\begin{bmatrix}
\tD \bxt\\
\tD \blt\\
\end{bmatrix} + o(\|(\tD \bxt, \tD \blt)\|^2)\\
& \leq (0.5-\beta)\begin{bmatrix}
\nbx \mLe^{\tau} \\
\nbl \mLe^{\tau} \\
\end{bmatrix}^T
\begin{bmatrix}
\tD \bxt\\
\tD \blt\\
\end{bmatrix} + \delta_{\mu,b}\{2\hU+(\eta_1+2\eta_2)\hU^2\}\|(\Delta \bxt, \Delta \blt)\|^2 + o(\|(\Delta \bxt, \Delta \blt)\|^2)\\
& \leq \cbr{(0.5-\beta)\cbr{-\frac{\eta_2\gamma_G}{8}+1.05\eta_1\hU^2\delta_{\mu,b}} + 1.05\eta_1\hU^2\delta_{\mu,b}}\|(\Delta \bxt, \Delta \blt)\|^2 + o(\|(\Delta \bxt, \Delta \blt)\|^2)\\
& = \cbr{-\frac{(0.5-\beta)\eta_2\gamma_G}{8} +(1.5-\beta)1.05\eta_1\hU^2\delta_{\mu,b} }\|(\Delta \bxt, \Delta \blt)\|^2 + o(\|(\Delta \bxt, \Delta \blt)\|^2),
\end{align*}
where the second inequality is due to \eqref{eq57}, \eqref{nequ:9}, \eqref{nequ:10}, and the fact that $2\hU + (\eta_1+2\eta_2)\hU^2 \leq 2\hU+ 2/7 + \eta_1\hU^2 \leq 2.5\hU^2+\eta_1\hU^2 \leq 1.05\eta_1\hU^2$. By \eqref{eq61} and noting that the condition of $\delta_{\mu,b}$ implies the condition \eqref{eq55}, we complete the proof.


\bibliographystyle{siamplain}
\bibliography{references}

\vspace{0.1cm}
\begin{flushright}
\scriptsize \framebox{\parbox{0.95\textwidth}{Government License: The submitted manuscript has been created by UChicago Argonne, LLC, Operator of Argonne National Laboratory (``Argonne"). Argonne, a U.S. Department of Energy Office of Science laboratory, is operated under Contract No. DE-AC02-06CH11357.  The U.S. Government retains for itself, and others acting on its behalf, a paid-up nonexclusive, irrevocable worldwide license in said article to reproduce, prepare derivative works, distribute copies to the public, and perform publicly and display publicly, by or on behalf of the Government. The Department of Energy will provide public access to these results of federally sponsored research in accordance with the DOE Public Access Plan. http://energy.gov/downloads/doe-public-access-plan. }}
\normalsize
\end{flushright}

\end{document}